\newcommand{\rrvert}{\vert}
\newcommand{\llvert}{\vert}
\newcommand{\eqref}[1]{(\ref{#1})}
\newtheorem{theorem}{Theorem}
\newtheorem{lemma}[theorem]{Lemma}
\begin{document}
\begin{frontmatter}

\title{Large deviations for Markovian nonlinear Hawkes~processes}
\runtitle{LDP for Markovian nonlinear Hawkes processes}

\begin{aug}
\author[A]{\fnms{Lingjiong} \snm{Zhu}\corref{}\ead[label=e1]{ling@cims.nyu.edu}\thanksref{T1}}
\thankstext{T1}{Supported by NSF Grant DMS-09-04701, DARPA grant and
MacCracken Fellowship at New York University.}
\runauthor{L. Zhu}
\affiliation{New York University}
\address[A]{Courant Institute of Mathematical Sciences\\
New York University\\
251 Mercer Street\\
New York, New York 10012\\
USA\\
\printead{e1}} 
\end{aug}

\received{\smonth{6} \syear{2012}}
\revised{\smonth{12} \syear{2013}}

%
\begin{abstract}
Hawkes process is a class of simple point processes that is
self-exciting and has clustering effect.
The intensity of this point process depends on its entire past history.
It has wide applications
in finance, neuroscience and many other fields.
In this paper, we study the large deviations for nonlinear Hawkes processes.
The large deviations for linear Hawkes processes has been studied by
Bordenave and Torrisi.
In this paper, we prove first a large deviation principle for a special
class of nonlinear Hawkes processes,
that is, a Markovian Hawkes process with nonlinear rate and exponential
exciting function,
and then generalize it to get the result for sum of exponentials
exciting functions.
We then provide an alternative proof for the large deviation principle
for a linear Hawkes process.
Finally, we use an approximation approach to prove the large deviation
principle for a special class of nonlinear Hawkes processes
with general exciting functions.
\end{abstract}

%
\begin{keyword}[class=AMS]
\kwd{60G55}
\kwd{60F10}
\end{keyword}
\begin{keyword}
\kwd{Large deviations}
\kwd{rare events}
\kwd{point processes}
\kwd{Hawkes processes}
\kwd{self-exciting processes}
\end{keyword}

\end{frontmatter}

\section{Introduction}\label{sec1}

Let $N$ be a simple point process on $\mathbb{R}$, and let $\mathcal
{F}_{t}:=\sigma(N(C),C\in\mathcal{B}(\mathbb{R}), C\subset(-\infty,t])$ be
an increasing family of $\sigma$-algebras. Any nonnegative $\mathcal
{F}_{t}$-progressively measurable process $\lambda_{t}$ with\vspace*{-1pt}
%
\begin{equation}
\mathbb{E} \bigl[N(a,b]|\mathcal{F}_{a} \bigr]=\mathbb{E} \biggl[
\int_{a}^{b}\lambda_{s}\,ds \Big|
\mathcal{F}_{a} \biggr]\vspace*{-1pt}
\end{equation}
a.s. for all intervals $(a,b]$ is called an $\mathcal{F}_{t}$-intensity
of $N$. We use the notation $N_{t}:=N(0,t]$ to denote the number of
points in the interval $(0,t]$.

A general Hawkes process is a simple point process $N$ admitting an
$\mathcal{F}_{t}$-intensity\vspace*{-1pt}
%
\begin{equation}
\lambda_{t}:=\lambda\biggl(\int_{0}^{t}h(t-s)N(ds)
\biggr),\vspace*{-1pt}
\end{equation}
where $\lambda(\cdot)\dvtx \mathbb{R}^{+}\rightarrow\mathbb{R}^{+}$ is
locally integrable
and left continuous, $h(\cdot)\dvtx \mathbb{R}^{+}\rightarrow\mathbb
{R}^{+}$, and
we always assume that $\Vert h\Vert_{L^{1}}=\int_{0}^{\infty
}h(t)\,dt<\infty$.
The notation $\int_{0}^{t}h(t-s)N(ds)$ stands for $\int_{(0,t)}h(t-s)N(ds)$.
Local integrability assumption of $\lambda(\cdot)$ ensures that the
process is nonexplosive and left continuity assumption
ensures that $\lambda_{t}$ is $\mathcal{F}_{t}$-predictable.

In the literature, $h(\cdot)$ and $\lambda(\cdot)$ are usually referred to
as exciting function and rate function, respectively.

Let $Z_{t}=\sum_{0<\tau_{j}<t}h(t-\tau_{j})$,
where $\tau_{j}$ is the $j$th arrival time of the process for $j\geq
1$. Thus we can write $\lambda_{t}=\lambda(Z_{t})$.

This is known as the nonlinear Hawkes process; see Br\'{e}maud and
Massouli\'{e}~\cite{Bremaud}. When the exciting function $h(\cdot)$
is exponential or a sum of exponentials, the process is Markovian, and
we name it a Markovian nonlinear Hawkes process.

When $\lambda(\cdot)$ is linear, this is known as the (linear) Hawkes
process, which was introduced in Hawkes \cite{Hawkes}.
If $\lambda(\cdot)$ is linear and $h(\cdot)$ is exponential or a sum of
exponentials,
the (linear) Markovian Hawkes process is sometimes referred to as
Markovian self-exciting processes; see, for example, Oakes \cite{Oakes}.
You can think of the arrival times $\tau_{j}$ as ``bad'' events,
which can be the arrivals of claims in insurance literature or the time
of defaults of big firms in the real world.
Hawkes process captures both the self-exciting property and the
clustering effect,
which explains why it has wide applications in cosmology, ecology,
epidemiology, seismology, neuroscience and DNA modeling.
For a list of references to these applications, see Bordenave and
Torrisi \cite{Bordenave}.

Hawkes process has also been applied in finance.
Empirical comparisons suggest that Hawkes processes have some of the
typical characteristics of a \mbox{financial} time series.
Financial data have been analyzed using Hawkes processes. Self-exciting
processes are used for the calculation of conditional risk measures,
such as the value-at-risk. Another area of finance where Hawkes
processes have been considered is credit default modeling.
Hawkes processes have been proposed as models for the arrival of
company defaults in a bond portfolio. For a list of references to the
applications in finance,
see Liniger \cite{Liniger} and Zhu \cite{ZhuThesis}.

For a short history of Hawkes process, we refer to Liniger \cite{Liniger}.
For a survey on Hawkes processes and related self-exciting processes,
Poisson cluster processes, marked point processes, etc.,
we refer to Daley and Vere-Jones \cite{Daley}.

When $\lambda(\cdot)$ is linear, say $\lambda(z)=\nu+z$, then one can
use immigration-birth representation,
also known as Galton--Watson theory to study it. Under the
immigration-birth representation,
if the immigrants are distributed as Poisson process with \mbox{intensity} $\nu
$ and each immigrant generates a cluster whose number of points is
denoted by $S$,
then $N_{t}$ is the total number of points generated in the clusters up
to time $t$. If the process is ergodic, we have
%
\begin{equation}
\lim_{t\rightarrow\infty}\frac{N_{t}}{t}=\nu\mathbb{E}[S]\qquad\mbox{a.s.}
\end{equation}

The central limit theorem for linear Hawkes processes was obtained in
Bacry et al. \cite{Bacry},
and it was proved for nonlinear Hawkes processes in Zhu \cite{ZhuCLT}.
The moderate deviations for linear
Hawkes processes was obtained in Zhu \cite{ZhuMDP}.

Bordenave and Torrisi \cite{Bordenave} proves that if $0<\mu=\int
_{0}^{\infty}h(t)\,dt<1$ and\break $\int_{0}^{\infty}th(t)\,dt<\infty$,
then $ (\frac{N_{t}}{t}\in\cdot)$ satisfies\vspace*{1pt} the large deviation
principle (LDP) with the good rate function $I(\cdot)$, that is,
for any closed set $C\subset\mathbb{R}$,
%
\begin{equation}
\limsup_{t\rightarrow\infty}\frac{1}{t}\log\mathbb{P}(N_{t}/t
\in C)\leq-\inf_{x\in C}I(x),
\end{equation}
and for any open set $G\subset\mathbb{R}$,
%
\begin{equation}
\liminf_{t\rightarrow\infty}\frac{1}{t}\log\mathbb{P}(N_{t}/t
\in G)\geq-\inf_{x\in G}I(x),
\end{equation}
where
%
\begin{equation}
I(x)= \cases{ \displaystyle x\theta_{x}+\nu-\frac{\nu x}{\nu+\mu x},
&\quad
if $x\in[0,\infty)$,
\cr
+\infty, &\quad otherwise,}
\end{equation}
where $\theta=\theta_{x}$ is the unique solution in $(-\infty,\mu-1-\log
\mu]$ of $\mathbb{E}[e^{\theta S}]=\frac{x}{\nu+x\mu}$, $x>0$.
It is well known that (e.g., see page 39 of Jagers \cite
{Jagers}), for all $\theta\in(-\infty,\mu-1-\log\mu]$, $\mathbb
{E}[e^{\theta S}]$ satisfies
%
\begin{equation}
\mathbb{E}\bigl[e^{\theta S}\bigr]=e^{\theta}\exp\bigl\{\mu\bigl(
\mathbb{E}\bigl[e^{\theta S}\bigr]-1\bigr) \bigr\}.
\end{equation}
See Dembo and Zeitouni \cite{Dembo} for general background regarding
large deviations and the applications.
Also Varadhan \cite{Varadhan} has an excellent survey article on this subject.

In a recent paper, Zhu \cite{ZhuCIR} studied the limit theorems for a
Cox--Ingersoll--Ross process with Hawkes jumps,
an extension of the linear Hawkes processes. Karabash and Zhu \cite
{Karabash} obtained to the limit theorems for
linear marked Hawkes processes, another extension of the classical
Hawkes processes.

The large deviations result for $(N_{t}/t\in\cdot)$ is helpful to study
the ruin probabilities
of a risk process when the claims arrivals follow a Hawkes process.
Stabile and Torrisi \cite{Stabile} considered risk processes with
nonstationary Hawkes claims arrivals and studied the asymptotic
behavior of infinite and finite horizon ruin probabilities under
light-tailed conditions on the claims.
The corresponding result for heavy-tailed claims was obtained by Zhu
\cite{ZhuRuin}.

In this paper, we are interested in Hawkes processes with general
nonlinear $\lambda(\cdot)$.
If $\lambda(\cdot)$ is nonlinear,
then the usual Galton--Watson theory approach no longer works. If the
exciting function $h$ is exponential or a sum of exponentials,
the process is Markovian, and there exists a generator of the process.
The difficulty arises when $h$ is not exponential or a sum of exponentials
in which case the process is non-Markovian. Another possible
generalization is to consider $h$ to be random.
Then, we will get a marked point process. For a discussion on marked
point processes, see Cox and Isham \cite{Cox}.

When $\lambda(\cdot)$ is nonlinear, Br\'{e}maud and Massouli\'{e} \cite
{Bremaud} proves that under certain conditions,
there exists a unique stationary version of the nonlinear Hawkes
process and Br\'{e}maud and Massouli\'{e} \cite{Bremaud}
also proves the convergence to equilibrium of a nonstationary version,
both in distribution and in variation.

In this paper, we will prove the large deviation when $h$ is
exponential, and $\lambda$ is nonlinear first.
Then, we will generalize the proof to the case when $h$ is a sum of
exponentials. We will use that to recover the result proved in
Bordenave and Torrisi \cite{Bordenave}. Finally, we will prove the
result for a special class of nonlinear $\lambda$ and general $h$.

\section{An ergodic lemma}\label{sec2}\label{ErgodicSection}

In this section, we prove an ergodic theorem for a class of Markovian
processes with jumps more general than the Markovian nonlinear Hawkes processes.

Let $Z_{i}(t):=\sum_{\tau_{j}<t}a_{i}e^{-b_{i}(t-\tau_{j})}$, $1\leq
i\leq d$, where
$b_{i}>0$, $a_{i}\neq0$ (might be negative), and $\tau_{j}$'s are the arrivals
of the simple point process with intensity $\lambda(Z_{1}(t),\ldots,Z_{d}(t))$ at time $t$, where
$\lambda\dvtx \mathcal{Z}\rightarrow\mathbb{R}^{+}$ and $\mathcal{Z}:=\mathbb
{R}^{\varepsilon_{1}}\times\cdots\times \mathbb{R}^{\varepsilon_{d}}$
is the domain for $(Z_{1}(t),\ldots,Z_{d}(t))$,
where $\mathbb{R}^{\varepsilon_{i}}:=\mathbb{R}^{+}$ or $\mathbb{R}^{-}$
depending on whether $\varepsilon_{i}=+1$ or $-1$,
where $\varepsilon_{i}=+1$ if $a_{i}>0$ and $\varepsilon_{i}=-1$ otherwise.
If we assume the exciting function to be $h(t)=\sum_{i=1}^{d}a_{i}e^{-b_{i}t}$,
then a Markovian nonlinear Hawkes process is a simple point process
with intensity of the form $\lambda(\sum_{i=1}^{d}Z_{i}(t))$.

The generator $\mathcal{A}$ for $(Z_{1}(t),\ldots,Z_{d}(t))$ is given by
%
\begin{eqnarray}
\mathcal{A}f &=&-\sum_{i=1}^{d}b_{i}z_{i}
\frac{\partial f}{\partial z_{i}}
\nonumber\\[-8pt]\\[-8pt]
&&{} +\lambda(z_{1},\ldots,z_{d})
\bigl[f(z_{1}+a_{1},\ldots,z_{d}+a_{d})-f(z_{1},
\ldots,z_{d})\bigr].\nonumber
\end{eqnarray}
For a reference to generators for Markov processes with jumps, see
Davis \cite{Davis}.

We want to prove the existence and uniqueness of the invariant
probability measure for $(Z_{1}(t),\ldots,Z_{d}(t))$.
Here the invariance is in time.

\begin{lemma}\label{ergodiclemma}
Consider $h(t)=\sum_{i=1}^{d}a_{i}e^{-b_{i}t}>0$.
Assume $\lambda(z_{1},\ldots,z_{n})\leq\sum_{i=1}^{d}\alpha
_{i}|z_{i}|+\beta$, where\vspace*{1pt} $\beta>0$ and $\alpha_{i}>0$, $1\leq i\leq d$,
satisfies $\sum_{i=1}^{d}\frac{|a_{i}|}{b_{i}}\alpha_{i}<1$. Then,
there exists a unique invariant probability measure for
$(Z_{1}(t),\ldots,\break Z_{d}(t))$.
\end{lemma}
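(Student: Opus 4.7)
The plan is to apply the Lyapunov criterion stated in the paragraph preceding the lemma: produce $u\geq 0$ with $\mathcal{A}u\leq C_1 - C_2 u$. Because the domain $\mathcal{Z}=\mathbb{R}^{\epsilon_1}\times\cdots\times\mathbb{R}^{\epsilon_d}$ is chosen precisely so that $\epsilon_i z_i=|z_i|\geq 0$ for every $z\in\mathcal{Z}$, the natural ansatz is the linear function
\[
u(z_1,\ldots,z_d)=\sum_{i=1}^{d}c_i|z_i|=\sum_{i=1}^{d}c_i\epsilon_i z_i,
\]
for positive constants $c_i$ to be chosen. This function is smooth on the interior of $\mathcal{Z}$ and visibly nonnegative; the dimensional trick that $\epsilon_i z_i\geq 0$ is what makes a linear test function available at all, and it is essential that the jump direction $(a_1,\ldots,a_d)$ always increases $u$ since $\epsilon_i a_i=|a_i|$.

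Computing with $\partial_{z_i}u=c_i\epsilon_i$, the generator yields
\[
\mathcal{A}u(z)=-\sum_{i=1}^{d}b_i c_i|z_i|+\lambda(z)\sum_{i=1}^{d}c_i|a_i|.
\]
Combining with $\lambda(z)\leq\sum_i\alpha_i|z_i|+\beta$ and writing $K=\sum_i c_i|a_i|$, the coefficient of $|z_i|$ is $-b_i c_i+\alpha_i K$. To obtain $\mathcal{A}u\leq -C_2 u + C_1$ it suffices that $c_i(b_i-C_2)\geq\alpha_i K$ for every $i$; the clean way to arrange this is to take $c_i=\alpha_i/(b_i-C_2)$, which reduces the coupled requirement to the scalar self-consistency
\[
K=\sum_{i=1}^{d}\frac{\alpha_i|a_i|}{b_i-C_2}\leq 1.
\]
By the hypothesis $\sum_i\alpha_i|a_i|/b_i<1$, this holds for every sufficiently small $C_2>0$ by continuity. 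With such $C_2$ and $C_1=\beta K$, the drift inequality is in place, and existence of an invariant probability measure follows by tightness and a standard Krylov--Bogolyubov argument.

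Uniqueness I would handle separately via a Harris-type theorem: the drift inequality above, combined with a minorization on a compact sublevel set of $u$, gives a unique invariant measure. The minorization should come from the piecewise-deterministic structure of $(Z_1(t),\ldots,Z_d(t))$: between jumps each coordinate decays like $e^{-b_i t}$ toward the boundary $z_i=0$, while new jumps arrive at rate $\lambda(z)$ which is locally bounded on compacts, so from any compact set the process can reach any prescribed neighborhood of a reference point with a uniformly lower-bounded density. The Lyapunov step is essentially forced by the linear ansatz together with the contractivity hypothesis $\sum_i\alpha_i|a_i|/b_i<1$; I expect the real obstacle to be making the PDMP minorization rigorous, since it requires quantitative control of the joint distribution of the random jump times of a point process with a state-dependent intensity.
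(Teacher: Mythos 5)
Your existence argument matches the paper's almost exactly: both use the linear Lyapunov function $u(z)=\sum_{i}c_i\epsilon_i z_i=\sum_i c_i|z_i|$ on $\mathcal{Z}$, exploit $\epsilon_i a_i=|a_i|$ so jumps increase $u$ by $\lambda\sum_i c_i|a_i|$, and then tune the $c_i$ against the contraction hypothesis $\sum_i|a_i|\alpha_i/b_i<1$. The paper fixes $c_i=\alpha_i/b_i$ outright and bounds $\mathcal{A}u\leq -\min_i b_i\,(1-\sum_i|a_i|\alpha_i/b_i)\,u+\beta\sum_i|a_i|\alpha_i/b_i$; your choice $c_i=\alpha_i/(b_i-C_2)$ with $C_2>0$ small is an equivalent parametrization reaching the same drift inequality. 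No issue there.

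Where your proposal does not reach the paper's proof is uniqueness, and you flagged this yourself. You sketch a Harris-type argument and defer the minorization on a compact set as ``the real obstacle.'' The paper sidesteps a full minorization by using a lighter criterion from Hairer's notes: it suffices, given existence, to show that for any two starting points $x,y\in\mathcal{Z}$ there is a $T>0$ for which $\mathcal{P}^x(T,\cdot)$ and $\mathcal{P}^y(T,\cdot)$ are not mutually singular. The key device — precisely the ``quantitative control of random jump times'' you worried about — is to condition on the event that the process has \emph{exactly one} jump in $(0,T)$ (exactly $d$ jumps in the $d$-dimensional case). Under that conditioning, $Z_T^x$ has a law with a strictly positive density on an explicit interval, e.g.\ $\bigl((a+x)e^{-bT},\,xe^{-bT}+a\bigr)$ in the 1D case, and for $T>\frac{1}{b}\log\frac{x-y+a}{a}$ these intervals for $x$ and $y$ overlap. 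That overlap gives the desired non-singularity with no need to control the joint law of all jump times uniformly over a compact set. Your route through Harris would also work and would give more (a rate of convergence), but it genuinely requires the extra uniform-minorization work that the paper avoids; without that step your proof is incomplete, while the paper's conditioning trick finishes it.
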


\begin{pf}
The lecture notes \cite{Hairer} by Martin Hairer gives the criterion
for the existence of an invariant probability measure for Markov processes.
Suppose we have a jump diffusion process with generator $\mathcal{L}$.
If we can find $u$ such that \mbox{$u\geq0$}, $\mathcal{L}u\leq C_{1}-C_{2}u$
for some constants $C_{1},C_{2}>0$,
then there exists an invariant probability measure.

Try $u(z_{1},\ldots,z_{d})=\sum_{i=1}^{d}\varepsilon_{i}c_{i}z_{i}\geq0$,
where $c_{i}>0$, $1\leq i\leq d$. Then
%
\begin{eqnarray}
\mathcal{A}u&=&-\sum_{i=1}^{d}b_{i}
\varepsilon_{i}c_{i}z_{i}+\lambda(z_{1},
\ldots,z_{d})\sum_{i=1}^{d}a_{i}
\varepsilon_{i}c_{i}
\nonumber\\[-8pt]\\[-8pt]
&\leq&-\sum_{i=1}^{d}b_{i}c_{i}|z_{i}|+
\sum_{i=1}^{d}\alpha_{i}|z_{i}|
\sum_{i=1}^{d}|a_{i}|c_{i}+
\beta\sum_{i=1}^{d}|a_{i}|c_{i}.
\nonumber
\end{eqnarray}
Taking $c_{i}=\frac{\alpha_{i}}{b_{i}}>0$, we get
%
\begin{eqnarray}
\mathcal{A}u&\leq&- \Biggl(1-\sum_{i=1}^{d}
\frac{|a_{i}|\alpha_{i}}{b_{i}} \Biggr) \sum_{i=1}^{d}
\alpha_{i}|z_{i}|+\beta\sum_{i=1}^{d}
\frac{|a_{i}|\alpha_{i}}{b_{i}}
\nonumber\\[-8pt]\\[-8pt]
&\leq&-\min_{1\leq i\leq d}b_{i}\cdot\Biggl(1-\sum
_{i=1}^{d}\frac{|a_{i}|\alpha_{i}}{b_{i}} \Biggr)u+\beta\sum
_{i=1}^{d}\frac{|a_{i}|\alpha_{i}}{b_{i}}.
\nonumber
\end{eqnarray}

Next, we will prove the uniqueness of the invariant probability measure.
It is sufficient to prove that for any $x,y\in\mathcal{Z}_{d}$, there
exist times $T_{1},T_{2}>0$ such that
$\mathcal{P}^{x}(T_{1},\cdot)$ and $\mathcal{P}^{y}(T_{2},\cdot)$ are
not mutually singular.
Here $\mathcal{P}^{x}(T,\cdot):=\mathbb{P}(Z^{x}_{T}\in\cdot)$,
where $Z^{x}_{T}$ is $Z_{T}$ starting at $Z_{0}=x$, that is,
$Z^{x}_{T}=xe^{-bT}+\break\sum_{\tau_{j}<T}ae^{-b(T-\tau_{j})}$.
To see this, let us prove by contradiction. If there were two distinct
invariant probability measures $\mu_{1}$
and $\mu_{2}$, then there exist two disjoints sets $E_{1}$ and $E_{2}$
such that $\mu_{1}\dvtx E_{1}\rightarrow E_{1}$
and $\mu_{2}\dvtx E_{2}\rightarrow E_{2}$; see, for example, Varadhan \cite
{VaradhanII}. Now, we can choose $x_{1}\in E_{1}$ and $x_{2}\in E_{2}$.
So that $\mathcal{P}^{x_{1}}(T_{1},\cdot)$
and $\mathcal{P}^{x_{2}}(T_{2},\cdot)$ are supported on $E_{1}$ and
$E_{2}$, respectively, for any $T_{1},T_{2}>0$, which implies
that $\mathcal{P}^{x_{1}}(T_{1},\cdot)$ and $\mathcal
{P}^{x_{2}}(T_{2},\cdot)$ are mutually singular.
This leads to a contradiction.

Consider the simplest case $h(t)=ae^{-bt}$.
Let us assume that $x>y>0$. Conditioning on the event that $Z_{t}^{x}$
and $Z_{t}^{y}$ have exactly one jump during the time interval $(0,T)$,
respectively,
the laws of $\mathcal{P}^{x}(T,\cdot)$ and $\mathcal{P}^{y}(T,\cdot)$
have positive densities on the sets
%
\begin{equation}
\bigl((a+x)e^{-bT},xe^{-bT}+a \bigr)\quad\mbox{and}\quad
\bigl((a+y)e^{-bT},ye^{-bT}+a \bigr),
\end{equation}
respectively. Choosing $T>\frac{1}{b}\log(\frac{x-y+a}{a})$, we have
%
\begin{equation}
\bigl((a+x)e^{-bT},xe^{-bT}+a \bigr)\cap\bigl((a+y)e^{-bT},ye^{-bT}+a
\bigr)\neq\varnothing,
\end{equation}
which implies that $\mathcal{P}^{x}(T,\cdot)$ and $\mathcal
{P}^{y}(T,\cdot)$ are not mutually singular.

Similarly, one can show the uniqueness of the invariant probability
measure for the multidimensional case. Indeed, it is easy to see
that for any $x,y\in\mathcal{Z}_{d}$, $Z_{T_{1}}^{x}$ and
$Z_{T_{2}}^{y}$ hit a common point for some $T_{1}$
and $T_{2}$ after possibly different number of jumps. Here
$Z_{t}^{x}:=(Z_{t}^{x_{1}},\ldots,Z_{t}^{x_{d}})\in\mathcal{Z}_{d}$
and $Z_{t}^{y}:=(Z_{t}^{y_{1}},\ldots,Z_{t}^{y_{d}})\in\mathcal{Z}_{d}$,
where $Z_{t}^{x_{i}}=x_{i}e^{-b_{i}t}+\sum_{\tau
_{j}<t}a_{i}e^{-b_{i}(t-\tau_{j})}$, $1\leq i\leq d$.
Since $\mathcal{P}^{x}(T_{1},\cdot)$ and $\mathcal{P}^{y}(T_{2},\cdot)$
have probability densities,
$\mathcal{P}^{x}(T_{1},\cdot)$ and $\mathcal{P}^{y}(T_{2},\cdot)$ are
not mutually singular for some $T_{1}$ and $T_{2}$.
\end{pf}

\section{Large deviations for Markovian nonlinear Hawkes processes with exponential exciting function}\label{sec3}

We assume first that $h(t)=ae^{-bt}$, where $a,b>0$, that is, the
process $Z_{t}$ jumps upward an amount $a$ at each point
and decays exponentially between points with rate $b$. In this case,
$Z_{t}$ is Markovian.

Notice first that $Z_{0}=0$ and
%
\begin{equation}
dZ_{t}=-bZ_{t}\,dt+a\,dN_{t},
\end{equation}
which implies that $N_{t}=\frac{1}{a}Z_{t}+\frac{b}{a}\int_{0}^{t}Z_{s}\,ds$.

We prove first the existence of the limit of the logarithmic moment
generating function of $N_{t}$.

\begin{theorem}\label{mainlimit}
Assume\vspace*{2pt} that $\lim_{z\rightarrow\infty}\frac{\lambda(z)}{z}=0$ and that
$\lambda(\cdot)$
is continuous and bounded below by some positive constant. Then
%
\begin{equation}
\lim_{t\rightarrow\infty}\frac{1}{t}\log\mathbb{E}
\bigl[e^{\theta N_{t}}\bigr]=\Gamma(\theta),
\end{equation}
where
%
\begin{equation}
\Gamma(\theta)=\sup_{(\hat{\lambda},\hat{\pi})\in\mathcal{Q}_{e}} \biggl
\{\int\frac{\theta b}{a}z
\hat{\pi}(dz) +\int(\hat{\lambda}-\lambda)\hat{\pi}(dz)-\int\bigl(\log
(\hat{\lambda}/\lambda) \bigr)\hat{\lambda}\hat{\pi}(dz) \biggr\},\hspace*{-31pt}
\end{equation}
where $\mathcal{Q}_{e}$ is defined as
%
\begin{equation}
\qquad\mathcal{Q}_{e}= \bigl\{(\hat{\lambda},\hat{\pi})\in\mathcal{Q}\dvtx
\widehat{\mathcal{A}}\mbox{ has unique invariant probability measure }\hat
{\pi} \bigr
\},
\end{equation}
where
%
\begin{equation}
\mathcal{Q}= \biggl\{(\hat{\lambda},\hat{\pi})\dvtx \hat{\pi}\in\mathcal
{M}\bigl(
\mathbb{R}^{+}\bigr),\int z\hat{\pi}(dz)<\infty, \hat{\lambda}\in
L^{1}(\hat{\pi}), \hat{\lambda}>0 \biggr\},
\end{equation}
where\vspace*{1pt} $\mathcal{M}(\mathbb{R}^{+})$ denotes the space of probability
measures on $\mathbb{R}^{+}$
and for any $\hat{\lambda}$ such that $(\hat{\lambda},\hat{\pi})\in
\mathcal{Q}$, we define the generator $\widehat{\mathcal{A}}$ as
%
\begin{equation}
\widehat{\mathcal{A}}f(z)=-bz\frac{\partial f}{\partial z}+\hat{\lambda
}(z)\bigl[f(z+a)-f(z)
\bigr],
\end{equation}
for any $f\dvtx \mathbb{R}^{+}\rightarrow\mathbb{R}$ that is $C^{1}$, that
is, continuously differentiable.
\end{theorem}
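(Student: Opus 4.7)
The plan is to exploit the identity $N_t = \tfrac{1}{a}Z_t + \tfrac{b}{a}\int_0^t Z_s\,ds$, noted just before the statement, to rewrite the exponential moment as
\begin{equation*}
\mathbb{E}[e^{\theta N_t}] = \mathbb{E}\!\left[e^{\tfrac{\theta}{a}Z_t}\, \exp\!\Big(\tfrac{\theta b}{a}\!\int_0^t Z_s\,ds\Big)\right],
\end{equation*}
and recognize $\Gamma(\theta)$ as the Legendre dual of the Donsker--Varadhan rate function for the occupation measure $L_t := \tfrac{1}{t}\int_0^t \delta_{Z_s}\,ds$ of the Markov process $Z$. The first step is to show that the boundary term $e^{\theta Z_t/a}$ contributes only $o(t)$ to $\log\mathbb{E}[e^{\theta N_t}]$. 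The sublinearity hypothesis $\lambda(z)/z\to 0$ puts us in the scope of Lemma~\ref{ergodiclemma} (with any $\alpha>0$ eventually dominating $\lambda(z)/|z|$), and the associated Lyapunov estimate $\mathcal{A}u\le C_1-C_2 u$ should furnish the exponential tail bound on $Z_t$ that eliminates the boundary contribution after dividing by $t$. The problem then reduces to computing $\lim_{t\to\infty}\tfrac{1}{t}\log\mathbb{E}\big[\exp\!\big(t\!\int V\,dL_t\big)\big]$ with the linear potential $V(z)=\tfrac{\theta b}{a}z$.

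For the lower bound I fix an admissible pair $(\hat\lambda,\hat\pi)\in\mathcal{Q}_e$ and perform a Girsanov-type change of measure for point processes (Br\'emaud's formula) producing $\hat{\mathbb{P}}$ under which $N$ has $\mathcal{F}_t$-intensity $\hat\lambda(Z_{s-})$. Since
\begin{equation*}
\log\frac{d\mathbb{P}}{d\hat{\mathbb{P}}}\bigg|_{\mathcal{F}_t} = -\int_0^t \log(\hat\lambda/\lambda)(Z_{s-})\,dN_s + \int_0^t(\hat\lambda-\lambda)(Z_s)\,ds,
\end{equation*}
Jensen's inequality in the form $\log \mathbb{E}[e^X]=\log \hat{\mathbb{E}}[e^X \tfrac{d\mathbb{P}}{d\hat{\mathbb{P}}}]\ge \hat{\mathbb{E}}\big[X+\log\tfrac{d\mathbb{P}}{d\hat{\mathbb{P}}}\big]$ yields, after dividing by $t$ and invoking ergodicity under $\hat{\mathbb{P}}$ (which is exactly what membership in $\mathcal{Q}_e$ encodes, supplied by Lemma~\ref{ergodiclemma}), the limit $\int \tfrac{\theta b}{a}z\,\hat\pi(dz) + \int[(\hat\lambda-\lambda) - \hat\lambda\log(\hat\lambda/\lambda)]\,\hat\pi(dz)$. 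Taking the supremum over $(\hat\lambda,\hat\pi)\in\mathcal{Q}_e$ gives the desired lower bound by $\Gamma(\theta)$.

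The matching upper bound is the main obstacle, since $V(z)=\tfrac{\theta b}{a}z$ is unbounded on $\mathbb{R}^+$ and the state space is non-compact, so neither the standard Donsker--Varadhan LDP for $L_t$ nor Varadhan's integral lemma applies out of the box. My plan is to truncate, writing $V_M=V\wedge M$, so that Varadhan's lemma together with the level-3 DV principle applies to $V_M$ and delivers the truncated variational formula. The truncation is then removed by an exponential tightness estimate for $L_t$ derived once more from the Lyapunov inequality of Lemma~\ref{ergodiclemma}, which is what the assumption $\lambda(z)/z\to 0$ buys us; the assumption that $\lambda$ is bounded below by a positive constant enters because the admissible tilts $\hat\lambda$ must stay away from $0$ and $\infty$ on the essential support of $\hat\pi$ in order to make $\log(\hat\lambda/\lambda)$ integrable, and because it guarantees that the tilted processes remain genuinely ergodic. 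The two hardest technical points will be (i) identifying the DV rate function for our generator $\mathcal{A}$ explicitly with the jump-intensity-tilting expression inside $\Gamma(\theta)$, and (ii) controlling $\mathbb{E}[e^{\kappa\int_0^t Z_s ds}]$ uniformly in $t$ for some $\kappa>\tfrac{\theta b}{a}$ to justify passing $M\to\infty$ under the exponential.
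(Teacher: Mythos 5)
Your lower bound is correct and matches the paper's argument: tilt the intensity to $\hat\lambda$ via Girsanov, apply Jensen to $\log\mathbb{E}[e^{\theta N_t}]=\log\hat{\mathbb{E}}\bigl[e^{\theta N_t}\tfrac{d\mathbb{P}}{d\hat{\mathbb{P}}}\bigr]$, use the martingale property of $N_t-\int_0^t\hat\lambda(Z_s)\,ds$ under $\hat{\mathbb{P}}$, and invoke ergodicity for $(\hat\lambda,\hat\pi)\in\mathcal{Q}_e$. This yields $\liminf\geq\Gamma(\theta)$.

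The upper bound, however, has a genuine circularity. You propose truncating $V_M=V\wedge M$, applying Varadhan's lemma for bounded potentials, and then "removing the truncation by an exponential tightness estimate for $L_t$." But exponential tightness of $L_t$ in the weak topology does not suffice to pass $M\to\infty$ in Varadhan's lemma for an unbounded integrand: you need the moment condition $\limsup_t\frac{1}{t}\log\mathbb{E}[e^{\gamma t\int V\,dL_t}]<\infty$ for some $\gamma>1$, which is precisely the exponential estimate on $\mathbb{E}\bigl[e^{\kappa\int_0^t Z_s\,ds}\bigr]$ that you flag at the end as one of your "hardest technical points." That estimate is not a corollary of the linear Lyapunov function in Lemma~\ref{ergodiclemma}; a linear $u$ controls first moments of $Z_t$, not exponential moments. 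You would need an exponential Lyapunov function $u=e^{Kz}$ satisfying $\mathcal{A}u+Vu\leq Mu$, and once you have written that down you have already reproduced the paper's Feynman--Kac upper bound and the truncation detour is unnecessary. Likewise your plan to show the boundary term $e^{\theta Z_t/a}$ contributes $o(t)$ is more work than the paper needs: the boundary term is never estimated separately but simply absorbed by restricting to $u=e^{f}$ with $f(z)=Kz+g(z)+L$ and $K>\theta/a$, so that $u(z)\geq c_1 e^{\theta z/a}$.

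The bulk of the paper's proof is in fact the step you defer entirely, namely showing that the Feynman--Kac upper bound $\inf_{u\in\mathcal{U}_\theta}\sup_{z\geq 0}\bigl\{\tfrac{\mathcal{A}u}{u}+\tfrac{\theta b}{a}z\bigr\}$ coincides with the tilted-intensity expression $\Gamma(\theta)$. You allude to "identifying the DV rate function for our generator $\mathcal{A}$ with the jump-intensity-tilting expression" as a known difficulty, but offer no route. The paper does this by (i) characterizing stationarity through Echeverr\'ia's criterion, showing $\inf_{f\in\mathcal{F}}\int\hat{\mathcal{A}}f\,\hat\pi$ is $0$ on $\mathcal{Q}_e$ and $-\infty$ off it; (ii) writing the lower-bound quantity as $\sup_{(\hat\lambda\hat\pi,\hat\pi)\in\mathcal{R}}\inf_{f\in\mathcal{F}}F(\hat\lambda\hat\pi,\hat\pi,f)$; (iii) establishing concavity, upper semicontinuity and compactness of level sets — this is exactly where both the sublinearity $\lambda(z)/z\to 0$ and the positive lower bound on $\lambda$ are used, to obtain uniform bounds on $\int z\,\hat\pi$ and $\int\hat\lambda\,\hat\pi$; and (iv) applying a Ky Fan / Jo\'o-type minmax theorem to swap $\sup$ and $\inf$, after which optimizing over $\hat\lambda$ and $\hat\pi$ reproduces the Feynman--Kac expression. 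In short, the conceptual frame you chose (Donsker--Varadhan level-3 plus Varadhan's lemma) is legitimate, but it converts the problem into a package of sublemmas — LDP for $L_t$ on a noncompact space, identification of the rate function, extension of Varadhan to unbounded $V$ — each of which is at least as hard as the direct argument, and the proposal does not carry any of them out.
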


\begin{pf}
By Lemma \ref{thetafinite}, $\mathbb{E}[e^{\theta N_{t}}]<\infty$ for
any $\theta\in\mathbb{R}$, also
%
\begin{equation}
\mathbb{E}\bigl[e^{\theta N_{t}}\bigr]=\mathbb{E} \bigl[e^{({\theta
}/{a}) (Z_{t}+b\int_{0}^{t}Z_{s}\,ds )}
\bigr].\label{upper1}
\end{equation}
Define the set
%
\begin{equation}
\mathcal{U}_{\theta}= \bigl\{u\in C^{1}\bigl(
\mathbb{R}^{+},\mathbb{R}^{+}\bigr)\dvtx  u(z)=e^{f(z)},
\mbox{ where }f\in\mathcal{F} \bigr\},
\end{equation}
where
%
\begin{eqnarray}
\mathcal{F} &=& \biggl\{f\dvtx  f(z)=Kz+g(z)+L,
\nonumber\\[-9pt]\\[-9pt]
&&\hspace*{6pt}K>\frac{\theta}{a}, K,L\in
\mathbb{R}, g\mbox{ is }C_{1}\mbox{ with compact support} \biggr\}.\nonumber
\end{eqnarray}
Now for any $u\in\mathcal{U}_{\theta}$, define
%
\begin{equation}
M:=\sup_{z\geq0}\frac{\mathcal{A}u(z)+(({\theta b})/{a})zu(z)}{u(z)}.
\end{equation}
By Dynkin's formula if $M<\infty$, for $V(z):=\frac{\theta b}{a}z$, we have
%
\begin{eqnarray}
&& \mathbb{E} \bigl[u(Z_{t})e^{\int_{0}^{t}V(Z_{s})\,ds} \bigr]\nonumber
\\
&&\qquad =u(Z_{0})
+\int_{0}^{t}\mathbb{E} \bigl[\bigl(
\mathcal{A}u(Z_{s})+V(Z_{s})u(Z_{s})
\bigr)e^{\int_{0}^{s}V(Z_{v})\,dv} \bigr]\,ds
\\
&&\qquad \leq u(Z_{0})+M\int_{0}^{t}
\mathbb{E} \bigl[u(Z_{s})e^{\int_{0}^{s}V(Z_{v})\,dv} \bigr]\,ds,
\nonumber
\end{eqnarray}
which implies by Gronwall's lemma that
%
\begin{equation}
\mathbb{E} \bigl[u(Z_{t})e^{\int_{0}^{t}V(Z_{s})\,ds} \bigr]\leq
u(Z_{0})e^{Mt}=u(0)e^{Mt}.\label{upper2}
\end{equation}
Observe that by the definition of $\mathcal{U}_{\theta}$, for any $u\in
\mathcal{U}_{\theta}$,
we have $u(z)\geq c_{1}e^{({\theta}/{a})z}$ for some constant
$c_{1}>0$ and therefore by \eqref{upper1} and \eqref{upper2},
%
\begin{equation}
\mathbb{E} \bigl[e^{\theta N_{t}} \bigr]\leq\frac{1}{c_{1}} \mathbb{E}
\bigl[u(Z_{t})e^{\int_{0}^{t}(({\theta b})/{a})Z_{s}\,ds} \bigr]\leq\frac
{1}{c_{1}}u(0)e^{Mt}.
\end{equation}
Hence
%
\begin{equation}
\limsup_{t\rightarrow\infty}\frac{1}{t}\log\mathbb{E}
\bigl[e^{\theta N_{t}} \bigr]\leq M=\sup_{z\geq0}
\frac{\mathcal{A}u(z)+((\theta b)/{a})zu(z)}{u(z)},
\end{equation}
which is still true even if $M=\infty$. Since this holds for any $u\in
\mathcal{U}_{\theta}$,
%
\begin{equation}
\limsup_{t\rightarrow\infty}\frac{1}{t}\log\mathbb{E}
\bigl[e^{\theta N_{t}} \bigr]\leq\inf_{u\in\mathcal{U}_{\theta}}\sup
_{z\geq0}\frac{\mathcal{A}u(z)+(({\theta b})/{a})zu(z)}{u(z)}.
\end{equation}

Define the tilted probability measure $\widehat{\mathbb{P}}$ by
%
\begin{equation}
\frac{d\widehat{\mathbb{P}}}{d\mathbb{P}} \bigg|_{\mathcal{F}_{t}} =\exp\biggl\{
\int_{0}^{t}
\bigl(\lambda(Z_{s})-\hat{\lambda}(Z_{s})\bigr)\,ds+\int
_{0}^{t}\log\biggl(\frac{\hat{\lambda}(Z_{s})}{\lambda(Z_{s})}
\biggr)\,dN_{s} \biggr\}. \label{tiltedP}
\end{equation}
Notice that $\widehat{\mathbb{P}}$ defined in \eqref{tiltedP} is indeed a
probability measure by Girsanov formula. (For the theory of absolute continuity
for point processes and their Girsanov formulas, we refer to Lipster
and Shiryaev \cite{Lipster}.)

Now by Jensen's inequality
%
\begin{eqnarray}
&& \liminf_{t\rightarrow\infty}\frac{1}{t}\log\mathbb{E}
\bigl[e^{\theta N_{t}}\bigr]\nonumber
\\
&&\qquad =\liminf_{t\rightarrow\infty}\frac{1}{t}\log\widehat{\mathbb{E}} \biggl[
\exp\biggl\{\theta N_{t}-\log\frac{d\widehat{\mathbb{P}}}{d\mathbb{P}}
\bigg|_{\mathcal{F}_{t}}
\biggr\} \biggr]
\nonumber\\[-8pt]\\[-8pt]
&&\qquad \geq \liminf_{t\rightarrow\infty}\widehat{\mathbb{E}} \biggl[\frac{1}{t}
\theta N_{t}-\frac{1}{t}\log\frac{d\widehat{\mathbb{P}}}{d\mathbb{P}}
\bigg|_{\mathcal{F}_{t}}
\biggr]
\nonumber
\\
&&\qquad =\liminf_{t\rightarrow\infty}\widehat{\mathbb{E}} \biggl[\frac{1}{t}
\theta N_{t}-\frac{1}{t}\int_{0}^{t}
\bigl(\lambda(Z_{s}) -\hat{\lambda}(Z_{s})\bigr)\,ds-\int
_{0}^{t}\log\biggl(\frac{\hat{\lambda}(Z_{s})}{\lambda(Z_{s})}
\biggr)\,dN_{s} \biggr].\nonumber\hspace*{-20pt}
\end{eqnarray}
Since $N_{t}-\int_{0}^{t}\hat{\lambda}(Z_{s})\,ds$ is a martingale under
$\widehat{\mathbb{P}}$, we have
%
\begin{equation}
\widehat{\mathbb{E}} \biggl[\int_{0}^{t}\log\biggl(
\frac{\hat{\lambda}(Z_{s})}{\lambda(Z_{s})} \biggr) \bigl(dN_{s}-\hat
{\lambda}(Z_{s})\,ds
\bigr) \biggr]=0.
\end{equation}
Therefore, by the ergodic theorem, (for a reference, see Chapter~16.4
of Koralov and Sinai \cite{Koralov}),
for any $(\hat{\lambda},\hat{\pi})\in\mathcal{Q}_{e}$,
%
\begin{eqnarray}
\qquad &&\liminf_{t\rightarrow\infty}\frac{1}{t}\log\mathbb{E}
\bigl[e^{\theta N_{t}}\bigr]\nonumber
\\
&&\qquad \geq\liminf_{t\rightarrow\infty}\widehat{\mathbb{E}} \biggl[\frac{1}{t}
\theta N_{t}-\frac{1}{t}\int_{0}^{t}
\bigl(\lambda(Z_{s}) -\hat{\lambda}(Z_{s})\bigr)\,ds
\nonumber\\[-8pt]\\[-8pt]
&&\hspace*{100pt}{}-\int
_{0}^{t}\log\biggl(\frac{\hat{\lambda}(Z_{s})}{\lambda(Z_{s})} \biggr)
\hat{\lambda}(Z_{s})\,ds \biggr]\nonumber
\\
&&\qquad =\int\frac{\theta b}{a}z\hat{\pi}(dz)+\int(\hat{\lambda}-\lambda)\hat
{\pi}(dz)-
\int\bigl(\log(\hat{\lambda}) -\log(\lambda) \bigr)\hat{\lambda}\hat
{\pi}(dz).
\nonumber
\end{eqnarray}
Hence
%
\begin{eqnarray}
\qquad &&\liminf_{t\rightarrow\infty}\frac{1}{t}\log\mathbb{E}
\bigl[e^{\theta N_{t}}\bigr]
\nonumber\\[-8pt]\\[-8pt]
&&\qquad \geq\sup_{(\hat{\lambda},\hat{\pi})\in\mathcal{Q}_{e}} \biggl\{\int
\frac{\theta b}{a}z\hat{\pi}+
\int(\hat{\lambda}-\lambda)\hat{\pi} -\int\bigl(\log(\hat{\lambda
})-\log(\lambda)
\bigr)\hat{\lambda}\hat{\pi} \biggr\}.
\nonumber
\end{eqnarray}

Recall that
%
\begin{eqnarray}
\mathcal{F} &=& \biggl\{f\dvtx  f(z)=Kz+g(z)+L, K>\frac{\theta}{a},
\nonumber\\[-9pt]\\[-9pt]
&&\hspace*{6pt} K,L\in \mathbb{R},
g\mbox{ is }C_{1}\mbox{ with compact support} \biggr\}.\nonumber
\end{eqnarray}
We claim that
%
\begin{equation}
\inf_{f\in\mathcal{F}} \biggl\{\int\widehat{\mathcal{A}}f(z)\hat{\pi}(dz)
\biggr\}= \cases{ 0, &\quad if $(\hat{\lambda},\hat{\pi})\in\mathcal{Q}_{e}$,
\vspace*{3pt}\cr
-\infty, &\quad if $(\hat{\lambda},\hat{\pi})\in\mathcal{Q}\setminus
\mathcal{Q}_{e}$.}
\end{equation}
It is easy to see that for $(\hat{\lambda},\hat{\pi})\in\mathcal
{Q}_{e}$, and $g$ being $C_{1}$ with compact support, $\int\mathcal
{A}g\hat{\pi}=0$.
Next, we can find a sequence $f_{n}(z)\rightarrow z$ pointwise under
the bound $|f_{n}(z)|\leq\alpha z+\beta$, for some $\alpha,\beta>0$, where
$f_{n}(z)$ is $C_{1}$ with compact support. But by our definition of
$\mathcal{Q}$, $\int z\hat{\pi}<\infty$. So by the dominated
convergence theorem,
$\int\widehat{\mathcal{A}}z\hat{\pi}=0$.
The nontrivial part is to prove that if for any $g\in\mathcal{G}=\{
g(z)+L, g$ is $C_{1}$ with compact support$\}$
such that $\int\widehat{\mathcal{A}}g\hat{\pi}=0$, then $(\hat{\lambda},\hat
{\pi})\in\mathcal{Q}_{e}$. We can easily check the conditions
in Echevrr\'{i}a \cite{Echeverria}. (E.g., $\mathcal{G}$ is
dense in $C(\mathbb{R}^{+})$, the set of continuous and bounded functions
on $\mathbb{R}^{+}$ with limit that exists at infinity and $\widehat{\mathcal{A}}$ satisfies the minimum principle,
that is, $\widehat{\mathcal{A}}f(z_{0})\geq0$ for any $f(z_{0})=\inf_{z\in
\mathbb{R}^{+}}f(z)$. This is because at minimum, the first derivative
of $f$
vanishes and $\hat{\lambda}(z_{0})(f(z_{0}+a)-f(z_{0}))\geq0$. The
other conditions in Echeverr\'{i}a \cite{Echeverria} can also
be easily verified.) Thus, Echevrr\'{i}a \cite{Echeverria} implies that
$\hat{\pi}$ is an
invariant measure. Now, our proof in Lemma \ref{ergodiclemma} shows
that $\hat{\pi}$ has to be unique as well.
Therefore, $(\hat{\lambda},\hat{\pi})\in\mathcal{Q}_{e}$. This implies
that if $(\hat{\lambda},\hat{\pi})\in\mathcal{Q}\setminus\mathcal{Q}_{e}$,
there exists some $g\in\mathcal{G}$, such that $\int\widehat{\mathcal{A}}g\hat{\pi}\neq0$. Now any constant multiplier of $g$ still belongs
to $\mathcal{G}$ and thus $\inf_{g\in\mathcal{G}}\int\widehat{\mathcal
{A}}g\hat{\pi}=-\infty$ and
hence $\inf_{f\in\mathcal{F}}\int\widehat{\mathcal{A}}f\hat{\pi}=-\infty$
if $(\hat{\lambda},\hat{\pi})\in\mathcal{Q}\setminus\mathcal{Q}_{e}$.

Therefore,
%
\begin{eqnarray}
\liminf_{t\rightarrow\infty}\frac{1}{t}\log\mathbb{E}
\bigl[e^{\theta N_{t}}\bigr] &\geq&\sup_{(\hat{\lambda},\hat{\pi})\in
\mathcal{Q}}\inf
_{f\in\mathcal{F}} \biggl\{\int\frac{\theta b}{a}z\hat{\pi}-\widehat{H}(\hat{
\lambda},\hat{\pi}) +\int\widehat{\mathcal{A}}f\hat{\pi} \biggr\}\hspace*{-30pt}
\\
\label{switch}
&\geq&\sup_{(\hat{\lambda}\hat{\pi},\hat{\pi})\in\mathcal{R}}\inf_{f\in
\mathcal{F}} \biggl\{\int
\frac{\theta b}{a}z\hat{\pi} -\widehat{H}(\hat{\lambda},\hat{\pi})+\int\widehat{
\mathcal{A}}f\hat{\pi} \biggr\},\hspace*{-30pt}
\end{eqnarray}
where $\mathcal{R}=\{(\hat{\lambda}\hat{\pi},\hat{\pi})\dvtx (\hat{\lambda
},\hat{\pi})\in\mathcal{Q}\}$ and
%
\begin{equation}
\widehat{H}(\hat{\lambda},\hat{\pi})=\int\bigl[(\lambda-\hat{\lambda
})+\log(\hat{
\lambda}/\lambda)\hat{\lambda} \bigr]\hat{\pi}.
\end{equation}
Define
%
\begin{eqnarray}
F(\hat{\lambda}\hat{\pi},\hat{\pi},f)&=&\int\frac{\theta b}{a}z\hat{\pi
}-\widehat{H}(
\hat{\lambda},\hat{\pi})+\int\widehat{\mathcal{A}}f\hat{\pi}\nonumber
\\
&=&\int\frac{\theta b}{a}z\hat{\pi}-\widehat{H}(\hat{\lambda},\hat{\pi
})-\int bz
\frac{\partial f}{\partial z}\hat{\pi}
\\
&&{} +\int\bigl(f(z+a)-f(z)\bigr)\hat
{\lambda}\hat{\pi}.
\nonumber
\end{eqnarray}
Notice that $F$ is linear in $f$ and hence convex in $f$ and also
%
\begin{equation}
\widehat{H}(\hat{\lambda},\hat{\pi})=\sup_{f\in C_{b}(\mathbb{R}^{+})}
\biggl\{\int
\bigl[\hat{\lambda}f+\lambda\bigl(1-e^{f}\bigr) \bigr]\hat{\pi} \biggr
\},
\end{equation}
where $C_{b}(\mathbb{R}^{+})$ denotes the set of bounded functions on
$\mathbb{R}^{+}$. Inside the bracket above,
it is linear in both $\hat{\pi}$ and $\hat{\lambda}\hat{\pi}$. Hence
$\widehat{H}$ is weakly lower semicontinuous and convex in
$(\hat{\lambda}\hat{\pi},\hat{\pi})$. Therefore, $F$ is concave in
$(\hat{\lambda}\hat{\pi},\hat{\pi})$. Furthermore, for any $f=Kz+g+L\in
\mathcal{F}$,
%
\begin{eqnarray}
F(\hat{\lambda}\hat{\pi},\hat{\pi},f)&=&\int\biggl(\frac{\theta}{a}-K
\biggr)bz\hat{\pi}-\widehat{H}(\hat{\lambda},\hat{\pi}) -\int bz\frac
{\partial g}{\partial z}
\hat{\pi}
\nonumber\\[-8pt]\\[-8pt]
&&{}+\int\bigl(g(z+a)-g(z)\bigr)\hat{\lambda}\hat{\pi}+Ka\int\hat{\lambda
}\hat{
\pi}.
\nonumber
\end{eqnarray}
If $\lambda_{n}\pi_{n}\rightarrow\gamma_{\infty}$ and $\pi
_{n}\rightarrow\pi_{\infty}$ weakly, then, since $g$ is $C_{1}$ with
compact support, we have
%
\begin{eqnarray}
&&-\int bz\frac{\partial g}{\partial z}\pi_{n}+\int\bigl(g(z+a)-g(z)\bigr)
\lambda_{n}\pi_{n}+Ka\int\lambda_{n}
\pi_{n}
\nonumber\\[-8pt]\\[-8pt]
&&\qquad \rightarrow-\int bz\frac{\partial g}{\partial z}\pi_{\infty}+\int\bigl
(g(z+a)-g(z)
\bigr)\gamma_{\infty}+Ka\int\gamma_{\infty},
\nonumber
\end{eqnarray}
as $n\rightarrow\infty$. Moreover, in general, if $P_{n}\rightarrow P$
weakly, then, for any $f$ which is upper semicontinuous and
bounded from above, we have $\limsup_{n}\int f\,dP_{n}\leq\int f\,dP$.
Since $ (\frac{\theta}{a}-K )bz$ is continuous and nonpositive
on $\mathbb{R}^{+}$, we have
%
\begin{equation}
\limsup_{n\rightarrow\infty}\int\biggl(\frac{\theta}{a}-K \biggr)bz
\pi_{n}\leq\int\biggl(\frac{\theta}{a}-K \biggr)bz\pi_{\infty}.
\end{equation}
Hence, we conclude that $F$ is upper semicontinuous in the weak topology.

In order to switch the supremum and infimum in \eqref{switch}, since we
have already proved that $F$ is concave,
upper semicontinuous in $(\hat{\lambda}\hat{\pi},\hat{\pi})$ and convex
in $f$, it is sufficient to prove the compactness of $\mathcal{R}$
to apply Ky Fan's minmax theorem; see Fan \cite{Fan}. Indeed, Jo\'{o}
developed some level set method
and proved that it is sufficient to show the compactness of the level
set; see Jo\'{o} \cite{Joo} and Frenk and Kassay \cite{Frenk}.
In other words, it suffices to prove that, for any $C\in\mathbb{R}$ and
$f\in\mathcal{F}$, the level set
%
\begin{equation}
\qquad\quad \biggl\{(\hat{\lambda}\hat{\pi},\hat{\pi})\in\mathcal{R}\dvtx  \widehat{H}+\int bz
\frac{\partial f}{\partial z}\hat{\pi} -\frac{\theta b}{a}z\hat{\pi
}-\hat{\lambda}
\bigl[f(z+a)-f(z)\bigr]\hat{\pi}\leq C \biggr\}\label{pairinlevelset}
\end{equation}
is compact.\vadjust{\goodbreak}

Fix any $f=Kz+g+L\in\mathcal{F}$, where $K>\frac{\theta}{a}$ and $g$ is
$C_{1}$ with compact support and $L$ is some constant, uniformly for any
pair $(\hat{\lambda}\hat{\pi},\hat{\pi})$ that is in the level set of~\eqref{pairinlevelset}, there exists some $C_{1},C_{2}>0$ such that
%
\begin{eqnarray}
C_{1}&\geq&\widehat{H}+ \biggl(K-\frac{\theta}{a} \biggr)b\int z\hat{
\pi}-C_{2}\int\hat{\lambda}\hat{\pi}\nonumber
\\
&\geq&\int_{\hat{\lambda}\geq cz+\ell} \bigl[\lambda-\hat{\lambda}+\hat
{\lambda}
\log(\hat{\lambda}/\lambda) \bigr]\hat{\pi} + \biggl(K-\frac{\theta}{a}
\biggr)b
\int z\hat{\pi}
\nonumber
\\
&&{} -C_{2}\int_{\hat{\lambda}\geq cz+\ell}\hat{\lambda}\hat{
\pi}-C_{2}\int_{\hat{\lambda}<cz+\ell}\hat{\lambda}\hat{\pi}
\\
&\geq& \biggl[\min_{z\geq0}\log\frac{cz+\ell}{\lambda(z)}-1-C_{2}
\biggr]\int_{\hat{\lambda}\geq cz+\ell}\hat{\lambda}\hat{\pi} \nonumber
\\
&&{} + \biggl
[-c\cdot
C_{2}+ \biggl(K-\frac{\theta}{a} \biggr)b \biggr]\int z\hat{\pi}-
\ell C_{2}.
\nonumber
\end{eqnarray}
We choose $0<c< (K-\frac{\theta}{a} )\frac{b}{C_{2}}$ and $\ell$ large
enough so that
$\min_{z\geq0}\log\frac{cz+\ell}{\lambda(z)}-1-C_{2}>0$, where we used
the fact that $\lim_{z\rightarrow\infty}\frac{\lambda(z)}{z}=0$
and $\min_{z}\lambda(z)>0$.
Hence,
%
\begin{equation}
\int z\hat{\pi}\leq C_{3},\qquad\int_{\hat{\lambda}\geq cz+\ell}\hat{
\lambda}\hat{\pi}\leq C_{4},
\end{equation}
where
%
\begin{eqnarray}
C_{3} &=& \frac{C_{1}+\ell C_{2}}{-c\cdot C_{2}+ (K-(\theta/{a}) )b},
\nonumber\\[-8pt]\\[-8pt]
C_{4} &=&
\frac{C_{1}+\ell C_{2}}{\min_{z\geq0}\log (({cz+\ell})/{\lambda(z)})-1-C_{2}}.\nonumber
\end{eqnarray}
Therefore, we have
%
\begin{equation}
\int\hat{\lambda}\hat{\pi}=\int_{\hat{\lambda}\geq cz+\ell}\hat{\lambda
}\hat{\pi}+
\int_{\hat{\lambda}<cz+\ell}\hat{\lambda}\hat{\pi} \leq C_{4}+c\cdot
C_{3}+\ell,
\end{equation}
and hence
%
\begin{equation}
\widehat{H}(\hat{\lambda},\hat{\pi})\leq C_{1}+C_{2}
[C_{4}+c\cdot C_{3}+\ell]<\infty.
\end{equation}

Therefore, for any $(\lambda_{n}\pi_{n},\pi_{n})\in\mathcal{R}$, we get
%
\begin{equation}
\lim_{\ell\rightarrow\infty}\sup_{n}\int
_{z\geq\ell}\pi_{n}\leq\lim_{\ell\rightarrow\infty}\sup
_{n}\frac{1}{\ell}\int z\pi_{n} \leq\lim
_{\ell\rightarrow\infty}\frac{C_{3}}{\ell}=0,
\end{equation}
which implies the tightness of $\pi_{n}$. By Prokhorov's theorem, there
exists a subsequence of $\pi_{n}$
which converges weakly to $\pi_{\infty}$. We also want to show that
there exists some $\gamma_{\infty}$ such that
$\lambda_{n}\pi_{n}\rightarrow\gamma_{\infty}$ weakly (passing to a
subsequence if necessary).
It is enough to show that:
\begin{longlist}[(ii)]
\item[(i)] $\sup_{n}\int\lambda_{n}\pi_{n}<\infty$.

\item[(ii)] $\lim_{\ell\rightarrow\infty}\sup_{n}\int_{z\geq\ell}\lambda_{n}\pi_{n}=0$.
\end{longlist}

(i) and (ii) will give us tightness of $\lambda_{n}\pi_{n}$ and hence
implies the weak convergence for a subsequence.

Now, let us prove statements (i) and (ii).

To prove (i), notice that
%
\begin{equation}
\sup_{n}\int\lambda_{n}\pi_{n}=\sup
_{n}\int\frac{b}{a}z\pi_{n}\leq
\frac{b}{a}[C_{4}+c\cdot C_{3}+\ell]<\infty.
\end{equation}

To prove (ii), notice that $(\lambda-\lambda_{n})+\lambda_{n}\log
(\lambda_{n}/\lambda)\geq0$. That is because
$x-1-\log x\geq0$ for any $x>0$ and hence
%
\begin{equation}
\lambda-\hat{\lambda}+\hat{\lambda}\log(\hat{\lambda}/\lambda)=\hat
{\lambda}
\bigl[(\lambda/\hat{\lambda})-1-\log(\lambda/\hat{\lambda}) \bigr]\geq0.
\end{equation}
Notice that
%
\begin{eqnarray}
&& \lim_{\ell\rightarrow\infty}\sup_{n}\int
_{z\geq\ell}\lambda_{n}\pi_{n}
\nonumber\\[-8pt]\\[-8pt]
&&\qquad \leq \lim
_{\ell\rightarrow\infty} \sup_{n}\int_{\lambda_{n}<\sqrt{\lambda
z},z\geq\ell}
\lambda_{n}\pi_{n}
+\lim_{\ell\rightarrow\infty}\sup_{n}\int
_{\lambda_{n}\geq\sqrt{\lambda z},z\geq\ell}\lambda_{n}\pi_{n}.
\nonumber
\end{eqnarray}
For the first term, since $\sup_{n}\int z\pi_{n}<\infty$ and $\lim
_{z\rightarrow\infty}\frac{\lambda(z)}{z}=0$,
%
\begin{equation}
\lim_{\ell\rightarrow\infty}\sup_{n}\int
_{\lambda_{n}<\sqrt{\lambda z},z\geq\ell}\lambda_{n}\pi_{n} \leq\lim
_{\ell\rightarrow\infty}\sup_{n}\int_{z\geq\ell}
\sqrt{\lambda z}\pi_{n}=0.
\end{equation}
For the second term, since $\limsup_{z\rightarrow\infty}\frac{\lambda(z)}{z}=0$,
%
\begin{eqnarray}
\qquad &&\lim_{\ell\rightarrow\infty}\sup_{n}\int
_{\lambda_{n}\geq\sqrt{\lambda z},z\geq\ell}\lambda_{n}\pi_{n}
\nonumber\\[-8pt]\\[-8pt]
&&\qquad \leq\lim_{\ell\rightarrow\infty}\sup_{n}\widehat{H}(
\lambda_{n},\pi_{n})\sup_{\lambda_{n}\geq\sqrt{\lambda z}, z\geq\ell}
\frac{\lambda_{n}}{\lambda-\lambda_{n}+\lambda_{n}\log(\lambda
_{n}/\lambda)}=0.
\nonumber
\end{eqnarray}

Therefore, passing to some subsequence if necessary, we have $\lambda
_{n}\pi_{n}\rightarrow\gamma_{\infty}$ and $\pi_{n}\rightarrow\pi
_{\infty}$ weakly.
Since we proved that $F$ is upper semicontinuous in the weak topology,
the level set is compact in the weak topology.
Therefore, we can switch the supremum and infimum in \eqref{switch} and get
%
\begin{eqnarray}
&& \liminf_{t\rightarrow\infty}\frac{1}{t}\log\mathbb{E}
\bigl[e^{\theta N_{t}} \bigr]
\\
&&\qquad \geq\inf_{f\in\mathcal{F}}\sup_{\hat{\pi}\dvtx \int z\hat{\pi}<\infty}\sup
_{\hat{\lambda}\in L^{1}(\hat{\pi})} \biggl\{\int\frac{\theta b}{a}z\hat
{\pi}+(\hat{\lambda}-
\lambda)\hat{\pi}
\nonumber\\[-8pt]\\[-8pt]
&&\hspace*{131pt}{} -\log(\hat{\lambda}/\lambda)\hat{\lambda}\hat{\pi}+\widehat{\mathcal{A}}f\hat{\pi} \biggr\}\nonumber
\\
&&\qquad =\inf_{f\in\mathcal{F}}\sup_{\hat{\pi}\dvtx \int z\hat{\pi}<\infty} \int
\biggl[
\frac{\theta bz}{a}+\lambda(z) \bigl(e^{f(z+a)-f(z)}-1\bigr)-bz\frac
{\partial f}{\partial z}
\biggr]\hat{\pi}(dz)\label{optimallambda}\hspace*{-40pt}
\\
&&\qquad =\inf_{f\in\mathcal{F}}\sup_{z\geq0} \biggl[
\frac{\theta bz}{a}+\lambda(z) \bigl(e^{f(z+a)-f(z)}-1\bigr)-bz\frac
{\partial f}{\partial z}
\biggr]\label{optimalpi}
\\
&&\qquad =\inf_{f\in\mathcal{F}}\sup_{z\geq0} \biggl[
\frac{\theta bze^{f(z)}}{ae^{f(z)}} +\frac{\lambda(z)}{e^{f(z)}}\bigl
(e^{f(z+a)}-e^{f(z)}
\bigr)-\frac{bz}{e^{f(z)}}\frac{\partial e^{f(z)}}{\partial z} \biggr]\hspace*{-40pt}
\\
&&\qquad \geq\inf_{u\in\mathcal{U}_{\theta}}\sup_{z\geq0} \biggl\{
\frac{\mathcal{A}u}{u}+\frac{\theta b}{a}z \biggr\}.\label{fandu}
\end{eqnarray}
We\vspace*{2pt} need some justifications. Define $G(\hat{\lambda})=\hat{\lambda}-\log
(\hat{\lambda}/\lambda)\hat{\lambda}+\widehat{\mathcal{A}}f$.
The supremum of $G(\hat{\lambda})$ is achieved when $\frac{\partial
G}{\partial\hat{\lambda}}=0$
which implies $\hat{\lambda}=\lambda e^{f(z+a)-f(z)}$. Notice that for
$f\in\mathcal{F}$,
the optimal $\hat{\lambda}=\lambda e^{f(z+a)-f(z)}$ satisfies $\int\hat
{\lambda}\hat{\pi}<\infty$
since $\int\lambda\hat{\pi}<\infty$ and $\int z\hat{\pi}<\infty$. This
gives us \eqref{optimallambda}.
Next, let us explain \eqref{optimalpi}. For any probability measure
$\hat{\pi}$,
%
\begin{eqnarray}
&& \int\biggl[\frac{\theta bz}{a}+\lambda(z) \bigl(e^{f(z+a)-f(z)}-1\bigr)-bz
\frac{\partial f}{\partial z} \biggr]\hat{\pi}(dz)
\nonumber\\[-8pt]\\[-8pt]
&&\qquad \leq\sup_{z\geq0} \biggl[\frac{\theta bz}{a}+\lambda(z)
\bigl(e^{f(z+a)-f(z)}-1\bigr)-bz\frac{\partial f}{\partial z} \biggr],\nonumber
\end{eqnarray}
which implies the right-hand side of \eqref{optimallambda} is less or
equal to the right-hand side of \eqref{optimalpi}. To prove
the other direction.
For any $f=Kz+g+L\in\mathcal{F}$, we have
%
\begin{eqnarray}
&& \frac{\theta bz}{a}+\lambda(z) \bigl(e^{f(z+a)-f(z)}-1\bigr)-bz
\frac{\partial f}{\partial z}
\nonumber\\[-8pt]\\[-8pt]
&&\qquad = \biggl(\frac{\theta b}{a}-Kb \biggr)z+\lambda(z) \bigl(e^{Ka+g(z+a)-g(z)}-1
\bigr)-bz\frac{\partial g}{\partial z},
\nonumber
\end{eqnarray}
which is continuous in $z$ and also bounded on $z\in[0,\infty)$ since
$g$ is $C^{1}$ with compact support
and $K>\frac{\theta}{a}$ and $\lim_{z\rightarrow\infty}\frac{\lambda
(z)}{z}=0$. Hence
there exists some $z^{\ast}\geq0$ such that
%
\begin{eqnarray}
&&\frac{\theta bz}{a}+\lambda(z) \bigl(e^{f(z+a)-f(z)}-1\bigr)-bz
\frac{\partial f}{\partial z}
\nonumber\\[-8pt]\\[-8pt]
&&\qquad =\frac{\theta bz^{\ast}}{a}+\lambda\bigl(z^{\ast}\bigr) \bigl
(e^{f(z^{\ast}+a)-f(z^{\ast})}-1
\bigr)-bz^{\ast}\frac{\partial f}{\partial z} \bigg|_{z=z^{\ast}}.
\nonumber
\end{eqnarray}
Take a sequence of probability measures $\hat{\pi}_{n}$ such that it
has probability density function
$n$ if $z\in[z^{\ast}-\frac{1}{2n},z^{\ast}+\frac{1}{2n}]$ and $0$
otherwise. Then, for every $n$, \mbox{$\int z\hat{\pi}_{n}(dz)<\infty$}.
Therefore, we have
%
\begin{eqnarray}
\qquad && \lim_{n\rightarrow\infty}\int\biggl[\frac{\theta bz}{a}+\lambda(z)
\bigl(e^{f(z+a)-f(z)}-1\bigr)-bz\frac{\partial f}{\partial z} \biggr
]\hat{\pi}_{n}(dz)\nonumber
\\
&&\qquad =\lim_{n\rightarrow\infty}n\int_{z^{\ast}-({1}/({2n}))}^{z^{\ast}+({1}/{(2n)})}
\biggl[\frac{\theta bz}{a}+\lambda(z) \bigl(e^{f(z+a)-f(z)}-1\bigr)-bz
\frac{\partial f}{\partial z} \biggr]\,dz
\nonumber\\[-8pt]\\[-8pt]
&&\qquad =\frac{\theta bz^{\ast}}{a}+\lambda\bigl(z^{\ast}\bigr) \bigl
(e^{f(z^{\ast}+a)-f(z^{\ast})}-1
\bigr)-bz^{\ast}\frac{\partial f}{\partial z} \bigg|_{z=z^{\ast}}
\nonumber
\\
&&\qquad =\sup_{z\geq0} \biggl[\frac{\theta bz}{a}+\lambda(z)
\bigl(e^{f(z+a)-f(z)}-1\bigr)-bz\frac{\partial f}{\partial z} \biggr].
\nonumber
\end{eqnarray}
We conclude that the right-hand side of \eqref{optimallambda} is
greater or equal to
the right-hand side of \eqref{optimalpi}.

Notice that for any $f=Kz+g+L\in\mathcal{F}$,
%
\begin{eqnarray}
&&\frac{\theta bz}{a}+\lambda(z) \bigl(e^{f(z+a)-f(z)}-1\bigr)-bz
\frac{\partial f}{\partial z}
\nonumber\\[-8pt]\\[-8pt]
&&\qquad =\frac{b(\theta-Ka)}{a}z+\lambda(z) \bigl(e^{Ka+g(z+a)-g(z)}-1\bigr)-bz
\frac{\partial g}{\partial z},
\nonumber
\end{eqnarray}
whose supremum is achieved at some finite $z^{\ast}>0$ since $\lim
_{z\rightarrow\infty}\frac{\lambda(z)}{z}=0$,
$K>\frac{\theta}{a}$ and $g\in C^{1}$ with compact support. Hence $\int
z\hat{\pi}<\infty$ is satisified for the optimal $\hat{\pi}$.
This gives us \eqref{optimalpi}. Finally, for any $f\in\mathcal{F}$,
$u=e^{f}\in\mathcal{U}_{\theta}$, which implies \eqref{fandu}.
\end{pf}

\begin{lemma}\label{thetafinite}
Assume $\lim_{z\rightarrow\infty}\frac{\lambda(z)}{z}=0$, and we have
$\mathbb{E}[e^{\theta N_{t}}]<\infty$
for any \mbox{$\theta\in\mathbb{R}$}.
\end{lemma}

\begin{pf}
Observe that for any $\gamma\in\mathbb{R}$,
%
\begin{equation}
\exp\biggl\{\gamma N_{t}-\int_{0}^{t}
\bigl(e^{\gamma}-1\bigr)\lambda(Z_{s})\,ds \biggr\}
\end{equation}
is a martinagle. Since $\lim_{z\rightarrow\infty}\frac{\lambda
(z)}{z}=0$, for any $\varepsilon>0$,
there exists a\vspace*{2pt} constant \mbox{$C_{\varepsilon}>0$} such that $\lambda(z)\leq
C_{\varepsilon}+\varepsilon z$ for any $z\geq0$.
Also,
%
\begin{eqnarray}
\qquad \int_{0}^{t}Z_{s}\,ds&=&\int
_{0}^{t}\!\int_{0}^{s}h(s-u)N(du)\,ds
=\int_{0}^{t} \biggl[\int_{u}^{t}h(s-u)\,ds
\biggr]N(du)
\nonumber\\[-8pt]\\[-8pt]
&\leq&\int_{0}^{t} \biggl[\int
_{u}^{\infty}h(s-u)\,ds \biggr]N(du)= \Vert h \Vert_{L^{1}}N_{t}.\nonumber
\end{eqnarray}
Therefore, for any $\gamma>0$,
%
\begin{eqnarray}
1&=&\mathbb{E} \bigl[e^{\gamma N_{t}-\int_{0}^{t}(e^{\gamma}-1)\lambda
(Z_{s})\,ds} \bigr]\nonumber
\\
&\geq&\mathbb{E} \bigl[e^{\gamma N_{t}-(e^{\gamma}-1)\int
_{0}^{t}(C_{\varepsilon}+\varepsilon Z_{s})\,ds} \bigr]
\\
&\geq&\mathbb{E} \bigl[e^{\gamma N_{t}-(e^{\gamma}-1)C_{\varepsilon
}t-(e^{\gamma}-1)\varepsilon\Vert h\Vert_{L^{1}}N_{t}} \bigr].
\nonumber
\end{eqnarray}
For any $\theta>0$, choose $\gamma>\theta$ and $\varepsilon$ small enough
so that $\gamma-(e^{\gamma}-1)\varepsilon\Vert h\Vert_{L^{1}}\geq\theta$.
Then
%
\begin{equation}
\mathbb{E} \bigl[e^{\theta N_{t}} \bigr] \leq e^{(e^{\gamma
}-1)C_{\varepsilon}t}<\infty.
\end{equation}\upqed
\end{pf}

Now we are ready to prove the large deviations result.

\begin{theorem}
Assume $\lim_{z\rightarrow\infty}\frac{\lambda(z)}{z}=0$ and that
$\lambda(\cdot)$ is continuous and bounded below by some positive constant.
Then $(\frac{N_{t}}{t}\in\cdot)$ satisfies the large deviation
principle with the rate function $I(\cdot)$ as the Fenchel--Legendre
transform of~$\Gamma(\cdot)$,
%
\begin{equation}
I(x)=\sup_{\theta\in\mathbb{R}} \bigl\{\theta x-\Gamma(\theta) \bigr\}.
\end{equation}
\end{theorem}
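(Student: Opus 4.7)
The plan is to apply the Gärtner--Ellis theorem. Theorem \ref{mainlimit} already does the heavy lifting by supplying the limit $\Gamma(\theta)=\lim_{t\to\infty}\tfrac{1}{t}\log\mathbb{E}[e^{\theta N_{t}}]$ for every real $\theta$; the remaining work is to verify the structural hypotheses on $\Gamma$ and then read off the Fenchel--Legendre transform as the rate function.

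First I would record the easy properties of $\Gamma$. Convexity of $\theta\mapsto\tfrac{1}{t}\log\mathbb{E}[e^{\theta N_{t}}]$ (via H\"older) passes to the limit, giving convexity of $\Gamma$ on $\mathbb{R}$. Finiteness of $\Gamma$ on all of $\mathbb{R}$ follows from the domination $\lambda(z)\le\nu_{\epsilon}+\epsilon z$ used at the beginning of the proof of Theorem \ref{mainlimit}: for each $\epsilon>0$ the dominating linear Hawkes process has a finite moment generating function up to a threshold $C(\epsilon)\to\infty$ as $\epsilon\downarrow 0$, so $\Gamma(\theta)<\infty$ for every $\theta\in\mathbb{R}$. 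A finite convex function on $\mathbb{R}$ is automatically continuous and hence lower semicontinuous, and the origin sits in the interior of the effective domain of $\Gamma$ with ample room.

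The hard part will be the essential smoothness (steepness) of $\Gamma$, which is exactly what the Gärtner--Ellis lower bound needs in order to match the upper bound on all open sets. Because the effective domain is all of $\mathbb{R}$, steepness reduces to plain differentiability of $\Gamma$. The natural route is to exploit the variational formula in Theorem \ref{mainlimit}: the compactness of the level sets established there, combined with the strict convexity of $\hat{\lambda}\mapsto\hat{\lambda}\log(\hat{\lambda}/\lambda)-\hat\lambda$, produces a unique maximizing pair $(\hat{\lambda}^{\ast}_{\theta},\hat{\pi}^{\ast}_{\theta})\in\mathcal{Q}_{e}$ for each $\theta$; an envelope argument then identifies
\begin{equation*}
\Gamma'(\theta)=\frac{b}{a}\int z\,\hat{\pi}^{\ast}_{\theta}(dz),
\end{equation*}
finite for each $\theta$. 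This yields differentiability on $\mathbb{R}$ and hence the required steepness.

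With $\Gamma$ verified to be a finite, convex, essentially smooth function on $\mathbb{R}$, the Gärtner--Ellis theorem (see Dembo and Zeitouni \cite{Dembo}) delivers the full large deviation principle for $N_{t}/t$ with good rate function $I(x)=\sup_{\theta\in\mathbb{R}}\{\theta x-\Gamma(\theta)\}$, which is precisely the claim.
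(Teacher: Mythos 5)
The proposal diverges from the paper's proof in exactly the step that matters, and that divergence leaves a genuine gap.

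The paper does \emph{not} obtain the lower bound from the G\"artner--Ellis theorem; it applies G\"artner--Ellis only for the upper bound (where essential smoothness is not needed). For the lower bound it argues directly: using the tilted measure $\hat{\mathbb{P}}$ and Jensen's inequality, it shows
\begin{equation*}
\liminf_{t\to\infty}\frac{1}{t}\log\mathbb{P}\left(\frac{N_{t}}{t}\in B_{\epsilon}(x)\right)\geq-\Lambda(x),
\qquad
\Lambda(x)=\inf_{(\hat{\lambda},\hat{\pi})\in\mathcal{Q}_{e}^{x}}\hat{H}(\hat{\lambda},\hat{\pi}),
\end{equation*}
then observes $\Gamma(\theta)=\sup_{x}\{\theta x-\Lambda(x)\}=\Lambda^{\ast}(\theta)$, and invokes Lemma~\ref{convexity} to show $\Lambda$ is convex, whence $\Lambda=\Lambda^{\ast\ast}=\Gamma^{\ast}=I$. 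The whole point of this route is that it \emph{avoids} having to prove essential smoothness (i.e.\ differentiability) of $\Gamma$.

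Your proposal, by contrast, needs $\Gamma$ to be essentially smooth to run the G\"artner--Ellis lower bound, and the justification you offer is not a proof. You assert that the compactness from Theorem~\ref{mainlimit} together with strict convexity of $\hat{\lambda}\mapsto\hat{\lambda}\log(\hat{\lambda}/\lambda)-\hat{\lambda}$ gives a \emph{unique} maximizer $(\hat{\lambda}^{\ast}_{\theta},\hat{\pi}^{\ast}_{\theta})$, and that an ``envelope argument'' then yields $\Gamma'(\theta)$. Neither step is established. The optimization is jointly over $(\hat{\lambda},\hat{\pi})$ subject to the invariance constraint $\mathcal{Q}_{e}$, which couples the two variables; strict convexity in $\hat{\lambda}$ for fixed $\hat{\pi}$ does not by itself give uniqueness of the joint optimizer, and the paper never proves strict convexity of $\Lambda$ (Lemma~\ref{convexity} proves only convexity). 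Moreover even with a unique maximizer the envelope/Danskin-type differentiation requires some continuity of $\theta\mapsto(\hat{\lambda}^{\ast}_{\theta},\hat{\pi}^{\ast}_{\theta})$, which you do not supply. So the differentiability of $\Gamma$ --- the one thing G\"artner--Ellis genuinely needs beyond what Theorem~\ref{mainlimit} and Lemma~\ref{Gammafinite} give --- is asserted rather than proved. To close this, either carry out the tilting argument for the lower bound and convex-duality identification as the paper does, or give an actual proof of essential smoothness (e.g.\ by establishing strict convexity of $\Lambda$).
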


\begin{pf}
If $\limsup_{z\rightarrow\infty}\frac{\lambda(z)}{z}=0$, then the
forthcoming Lemma \ref{Gammafinite} implies that $\Gamma(\theta)<\infty
$ for any $\theta$.
Thus, by the G\"{a}rtner--Ellis theorem, we have the upper bound. For
the G\"{a}rtner--Ellis theorem and a general theory of large deviations,
see, for example, \cite{Dembo}. To prove the lower bound, it suffices
to show that for any $x>0$, $\varepsilon>0$, we have
%
\begin{equation}
\liminf_{t\rightarrow\infty}\frac{1}{t}\log\mathbb{P} \biggl(
\frac{N_{t}}{t}\in B_{\varepsilon}(x) \biggr)\geq-\sup_{\theta}
\bigl\{\theta x-\Gamma(\theta)\bigr\},
\end{equation}
where $B_{\varepsilon}(x)$ denotes the open ball centered at $x$ with
radius $\varepsilon$. Let $\widehat{\mathbb{P}}$ denote the
tilted~probability measure with rate $\hat{\lambda}$ defined in Theorem
\ref{mainlimit}. By Jensen's inequality,
%
\begin{eqnarray}
&& \frac{1}{t}\log\mathbb{P} \biggl(\frac{N_{t}}{t}\in
B_{\varepsilon}(x) \biggr)\nonumber
\\
&&\qquad =\frac{1}{t}\log\int_{(N_{t}/t)\in B_{\varepsilon}(x)}\frac{d\mathbb
{P}}{d\widehat{\mathbb{P}}}\,d\widehat{\mathbb{P}}
\\
&&\qquad =\frac{1}{t}\log\widehat{\mathbb{P}} \biggl(\frac{N_{t}}{t}\in
B_{\varepsilon}(x) \biggr)\nonumber
\\
&&\quad\qquad{}  +\frac{1}{t}\log\biggl[\frac{1}{\widehat{\mathbb{P}} (({N_{t}}/{t})\in B_{\varepsilon}(x) )}
\int_{(N_{t}/t)\in B_{\varepsilon}(x)}\frac{d\mathbb{P}}{d\widehat{\mathbb{P}}}\,d\widehat{\mathbb{P}} \biggr]
\nonumber
\\
&&\qquad \geq\frac{1}{t}\log\widehat{\mathbb{P}} \biggl(\frac{N_{t}}{t}\in
B_{\varepsilon}(x) \biggr)\nonumber
\\
&&\quad\qquad{} -\frac{1}{\widehat{\mathbb{P}} (({N_{t}}/{t})\in
B_{\varepsilon}(x) )} \cdot\frac{1}{t}\widehat{\mathbb{E}} \biggl[1_{(N_{t}/t)\in B_{\varepsilon}(x)}\log\frac{d\widehat{\mathbb{P}}}{d\mathbb{P}} \biggr].
\nonumber
\end{eqnarray}
By the ergodic theorem,
%
\begin{equation}
\liminf_{t\rightarrow\infty}\frac{1}{t}\log\mathbb{P} \biggl(
\frac{N_{t}}{t}\in B_{\varepsilon}(x) \biggr) \geq-\Lambda(x),
\end{equation}
where
%
\begin{equation}
\label{Lambdaofx} \Lambda(x)=\inf_{(\hat{\lambda},\hat{\pi})\in\mathcal
{Q}_{e}^{x}} \biggl\{\int(\lambda-
\hat{\lambda})\hat{\pi} +\int\log(\hat{\lambda}/\lambda)\hat{\lambda
}\hat{\pi}
\biggr\}
\end{equation}
and
%
\begin{equation}
\mathcal{Q}_{e}^{x}= \biggl\{(\hat{\lambda},\hat{\pi})\in
\mathcal{Q}_{e}\dvtx \int\hat{\lambda}(z)\hat{\pi}(dz)=x \biggr\}.
\end{equation}
Notice that
%
\begin{eqnarray}
\Gamma(\theta)&=&\sup_{(\hat{\lambda},\hat{\pi})\in\mathcal{Q}_{e}}
\biggl\{\int\theta\hat{\lambda}
\hat{\pi}+\int(\hat{\lambda}-\lambda)\hat{\pi} -\int\log(\hat{\lambda}/\lambda)
\hat{\lambda}\hat{\pi} \biggr\}\nonumber
\\
&=&\sup_{x}\sup_{(\hat{\lambda},\hat{\pi})\in\mathcal{Q}_{e}^{x}} \biggl
\{\int\theta
\hat{\lambda}\hat{\pi}+\int(\hat{\lambda}-\lambda)\hat{\pi} -\int\log
(\hat{
\lambda}/\lambda)\hat{\lambda}\hat{\pi} \biggr\}
\nonumber\\[-8pt]\\[-8pt]
&=&\sup_{x}\sup_{(\hat{\lambda},\hat{\pi})\in\mathcal{Q}_{e}^{x}} \biggl
\{\int
\frac{\theta b}{a}z\hat{\pi}(dz)+\int(\hat{\lambda}-\lambda)\hat{\pi}
-\int\log(
\hat{\lambda}/\lambda)\hat{\lambda}\hat{\pi} \biggr\}
\nonumber
\\
&=&\sup_{x}\bigl\{\theta x-\Lambda(x)\bigr\}.
\nonumber
\end{eqnarray}
We prove in Lemma \ref{convexity} that $\Lambda(x)$ is convex in $x$,
identify it as the convex conjugate of $\Gamma(\theta)$
and thus complete the proof.
\end{pf}

\begin{lemma}\label{convexity}
$\Lambda(x)$ in \eqref{Lambdaofx} is convex in $x$.
\end{lemma}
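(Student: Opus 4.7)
The plan is to recognize the cost functional in \eqref{Lambdaofx} as a relative entropy between finite positive measures, exploit its well-known joint convexity, and then check that convex combinations of admissible pairs remain admissible. First I would observe the identity
\[
\int(\lambda-\hat{\lambda})\hat{\pi}+\int\log(\hat{\lambda}/\lambda)\hat{\lambda}\hat{\pi}=H(\hat{\lambda}\hat{\pi}\,\|\,\lambda\hat{\pi}),
\]
where $H(\mu\,\|\,\nu):=\int\log(d\mu/d\nu)\,d\mu-\mu(\mathbb{R}^{+})+\nu(\mathbb{R}^{+})$ denotes the relative entropy between finite positive measures. Since $(\mu,\nu)\mapsto H(\mu\,\|\,\nu)$ is jointly convex and $\hat{\pi}\mapsto\lambda\hat{\pi}$ is linear, the functional $(\hat{\lambda}\hat{\pi},\hat{\pi})\mapsto H(\hat{\lambda}\hat{\pi}\,\|\,\lambda\hat{\pi})$ is jointly convex in the pair.

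Next, for $\alpha\in(0,1)$, $x_{1},x_{2}>0$, and $\varepsilon>0$, pick $\varepsilon$-minimizers $(\hat{\lambda}_{i},\hat{\pi}_{i})\in\mathcal{Q}_{e}^{x_{i}}$ and form
\[
\hat{\pi}_{\alpha}:=\alpha\hat{\pi}_{1}+(1-\alpha)\hat{\pi}_{2},\qquad \mu_{\alpha}:=\alpha\hat{\lambda}_{1}\hat{\pi}_{1}+(1-\alpha)\hat{\lambda}_{2}\hat{\pi}_{2},\qquad \hat{\lambda}_{\alpha}:=\frac{d\mu_{\alpha}}{d\hat{\pi}_{\alpha}}.
\]
Routine checks give $(\hat{\lambda}_{\alpha},\hat{\pi}_{\alpha})\in\mathcal{Q}$ with $\int z\,d\hat{\pi}_{\alpha}<\infty$, $\hat{\lambda}_{\alpha}>0$, and $\int\hat{\lambda}_{\alpha}\,d\hat{\pi}_{\alpha}=\alpha x_{1}+(1-\alpha)x_{2}=:x_{\alpha}$. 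Since $\int\hat{\mathcal{A}}f\,d\hat{\pi}=-\int bz\,\partial_{z}f\,d\hat{\pi}+\int(f(z+a)-f(z))\,d\mu$ is affine in $(\mu,\hat{\pi})$, invariance splits as a convex combination,
\[
\int\hat{\mathcal{A}}_{\alpha}f\,d\hat{\pi}_{\alpha}=\alpha\int\hat{\mathcal{A}}_{1}f\,d\hat{\pi}_{1}+(1-\alpha)\int\hat{\mathcal{A}}_{2}f\,d\hat{\pi}_{2}=0,
\]
so $\hat{\pi}_{\alpha}$ is invariant for the generator $\hat{\mathcal{A}}_{\alpha}$ built from the mixture rate $\hat{\lambda}_{\alpha}$.

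Uniqueness of $\hat{\pi}_{\alpha}$ as an invariant probability measure for $\hat{\mathcal{A}}_{\alpha}$ follows by re-running the non-singularity coupling argument in the second half of the proof of Lemma \ref{ergodiclemma}: that argument requires only positivity of the intensity (which $\hat{\lambda}_{\alpha}$ inherits) together with the explicit deterministic drift between jumps, and it does not invoke any growth bound at infinity. Hence $(\hat{\lambda}_{\alpha},\hat{\pi}_{\alpha})\in\mathcal{Q}_{e}^{x_{\alpha}}$, and joint convexity of $H$ yields
\[
\Lambda(x_{\alpha})\leq H(\mu_{\alpha}\,\|\,\lambda\hat{\pi}_{\alpha})\leq\alpha H(\hat{\lambda}_{1}\hat{\pi}_{1}\,\|\,\lambda\hat{\pi}_{1})+(1-\alpha)H(\hat{\lambda}_{2}\hat{\pi}_{2}\,\|\,\lambda\hat{\pi}_{2})\leq\alpha\Lambda(x_{1})+(1-\alpha)\Lambda(x_{2})+\varepsilon,
\]
and sending $\varepsilon\downarrow 0$ gives convexity.

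The main obstacle, I expect, is precisely the uniqueness clause embedded in $\mathcal{Q}_{e}$: invariance is a linear condition that is preserved under convex combinations for free, but uniqueness of the invariant measure is not automatic. My plan addresses this by noting the robustness of the coupling step in Lemma \ref{ergodiclemma} under positivity alone. Should a pathological case obstruct this, the natural fallback is to work with the relaxed set $\tilde{\mathcal{Q}}^{x}=\{(\hat{\lambda},\hat{\pi})\in\mathcal{Q}:\hat{\pi}\text{ is \emph{an} invariant probability measure and }\int\hat{\lambda}\,d\hat{\pi}=x\}$, which is manifestly convex; the infimum over $\tilde{\mathcal{Q}}^{x}$ agrees with $\Lambda(x)$ by the same Echeverr\'ia-type argument that identifies invariant measures in the proof of Theorem \ref{mainlimit}, and convexity then transfers immediately.
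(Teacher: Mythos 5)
Your proof is correct and follows essentially the same route as the paper: you form the identical mixture $\hat{\pi}_{\alpha}=\alpha\hat{\pi}_{1}+(1-\alpha)\hat{\pi}_{2}$ and the identical Radon--Nikodym blend $\hat{\lambda}_{\alpha}=d\mu_{\alpha}/d\hat{\pi}_{\alpha}$ (the paper's $\hat{\lambda}_{3}$), note that invariance is affine in $(\hat{\lambda}\hat{\pi},\hat{\pi})$, and close the estimate by joint convexity of relative entropy, which is precisely the inequality the paper derives by applying Jensen to $x\log x$. The one place you go beyond the paper is in explicitly addressing the uniqueness clause in $\mathcal{Q}_{e}$ --- the paper writes ``which implies that $(\hat{\lambda}_{3},\hat{\pi}_{3})\in\mathcal{Q}_{e}$'' with no comment --- and your observation that the coupling step in Lemma~\ref{ergodiclemma} uses only positivity of the jump rate (not a growth bound) is the right justification, with the relaxed-set fallback a sound alternative.
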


\begin{pf}
Define
%
\begin{equation}
\widehat{H}(\hat{\lambda},\hat{\pi})=\int(\lambda-\hat{\lambda})\hat{\pi}+\int\log(
\hat{\lambda}/\lambda)\hat{\lambda}\hat{\pi}.
\end{equation}
Then
%
\begin{equation}
\Lambda(x)=\inf_{(\hat{\lambda},\hat{\pi})\in\mathcal{Q}_{e}^{x}}\widehat{H}(\hat{\lambda},\hat{\pi}).
\end{equation}
We\vspace*{2pt} want to prove that $\Lambda(\alpha x_{1}+\beta x_{2})\leq\alpha
\Lambda(x_{1})+\beta\Lambda(x_{2})$
for any $\alpha,\beta\geq0$ with $\alpha+\beta=1$. For any $\varepsilon>0$,
we can choose $(\hat{\lambda}_{k},\hat{\pi}_{k})\in\mathcal
{Q}_{e}^{x_{k}}$ such that
$\widehat{H}(\hat{\lambda}_{k},\hat{\pi}_{k})\leq\Lambda(x_{k})+\varepsilon
/2$, for $k=1,2$. Set
%
\begin{equation}
\qquad \hat{\pi}_{3}=\alpha\hat{\pi}_{1}+\beta\hat{
\pi}_{2}, \qquad\hat{\lambda}_{3}=\frac{d(\alpha\hat{\pi}_{1})}{d(\alpha
\hat{\pi}_{1}+\beta\hat{\pi}_{2})}\hat{
\lambda}_{1}+\frac{d(\beta\hat{\pi}_{2})}{d(\alpha\hat{\pi}_{1}+\beta
\hat{\pi}_{2})}\hat{\lambda}_{2}.
\end{equation}
Then for any test function $f$,
%
\begin{equation}
\int\widehat{\mathcal{A}}_{3}f\hat{\pi}_{3}=\alpha\int\widehat{\mathcal{A}}_{1}f\hat{\pi}_{1}+\beta\int\widehat{\mathcal{A}}_{2}f\hat{\pi}_{2}=0,
\end{equation}
which implies $(\hat{\lambda}_{3},\hat{\pi}_{3})\in\mathcal{Q}_{e}$.
Furthermore,
\begin{equation}
\int\hat{\lambda}_{3}\hat{\pi}_{3}=\alpha\int\hat{
\lambda}_{1}\hat{\pi}_{1}+\beta\int\hat{\lambda}_{2}
\hat{\pi}_{2}=\alpha x_{1}+\beta x_{2}.
\end{equation}
Therefore, $(\hat{\lambda}_{3},\hat{\pi}_{3})\in\mathcal{Q}_{e}^{\alpha
x_{1}+\beta x_{2}}$.
Finally, since $x\log x$ is a convex function and if we apply Jensen's
inequality, we get
%
\begin{eqnarray}
\widehat{H}(\hat{\lambda}_{3},\hat{\pi}_{3}) &=&\int\bigl[(
\lambda-\hat{\lambda}_{3}-\hat{\lambda}_{3}\log\lambda)+
\hat{\lambda}_{3}\log\hat{\lambda}_{3} \bigr]\hat{
\pi}_{3}\nonumber
\\
&\leq&\int\biggl[(\lambda-\hat{\lambda}_{3}-\hat{
\lambda}_{3}\log\lambda) +\alpha\frac{d\hat{\pi}_{1}}{d\hat{\pi
}_{3}}\hat{
\lambda}_{1}\log\hat{\lambda}_{1} +\beta\frac{d\hat{\pi}_{2}}{d\hat{\pi}_{3}}
\hat{\lambda}_{2}\log\hat{\lambda}_{2} \biggr]\hat{
\pi}_{3}\hspace*{-25pt}
\\
&=&\alpha\widehat{H}(\hat{\lambda}_{1},\hat{\pi}_{1})+\beta
\widehat{H}(\hat{\lambda}_{2},\hat{\pi}_{2}).
\nonumber
\end{eqnarray}
Therefore,
%
\begin{eqnarray}
\Lambda(\alpha x_{1}+\beta x_{2})&\leq&\widehat{H}(\hat{
\lambda}_{3},\hat{\pi}_{3}) \nonumber
\\
&\leq&\alpha\widehat{H}(\hat{
\lambda}_{1},\hat{\pi}_{1})+\beta\widehat{H}(\hat{
\lambda}_{2},\hat{\pi}_{2})
\\
&\leq&\alpha\Lambda(x_{1})+
\beta\Lambda(x_{2})+\varepsilon.\nonumber
\end{eqnarray}\upqed
\end{pf}

\begin{lemma}\label{Gammafinite}
If $\limsup_{z\rightarrow\infty}\frac{\lambda(z)}{bz}<\frac{1}{a}$,
then for any
%
\begin{equation}
\theta<\log\biggl(\frac{b}{a\limsup_{z\rightarrow\infty}({\lambda
(z)}/{z})} \biggr)-1+\frac{a}{b}\cdot\limsup
_{z\rightarrow\infty}\frac{\lambda(z)}{z},
\end{equation}
we have $\Gamma(\theta)<\infty$. If $\limsup_{z\rightarrow\infty}\frac
{\lambda(z)}{z}=0$, then $\Gamma(\theta)<\infty$ for any $\theta\in
\mathbb{R}$.
\end{lemma}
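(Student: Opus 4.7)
The plan is to exploit the variational upper bound
\[
\Gamma(\theta)\leq\inf_{u\in\mathcal{U}_{\theta}}\sup_{z\geq 0}\frac{\mathcal{A}u(z)+\frac{\theta b}{a}zu(z)}{u(z)}
\]
that was established en route in the proof of Theorem \ref{mainlimit}, and to exhibit a single $u\in\mathcal{U}_{\theta}$ for which the right-hand side is finite. The natural candidate is the pure exponential $u(z)=e^{Kz}$ with $K>\theta/a$, obtained from the definition of $\mathcal{F}$ by taking $g\equiv 0$ and $L=0$.

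With this choice a direct computation gives
\[
\frac{\mathcal{A}u(z)+\frac{\theta b}{a}zu(z)}{u(z)}=b\!\left(\frac{\theta}{a}-K\right)z+\lambda(z)(e^{Ka}-1).
\]
Writing $\rho:=\limsup_{z\to\infty}\lambda(z)/z$, the coefficient of $z$ at infinity is $b(\theta/a-K)+\rho(e^{Ka}-1)$, so the supremum over $z\geq 0$ is finite as soon as this quantity is strictly negative (contributions from any compact set are handled by continuity of $\lambda$ together with the lower bound on $\lambda$). This reduces matters to locating $K>\theta/a$ with $\theta<\phi(K)$, where $\phi(K):=aK-\frac{a\rho(e^{Ka}-1)}{b}$.

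Next I would optimize $\phi$ over $K$. Since $\phi'(K)=a-\frac{a^{2}\rho}{b}e^{Ka}$, the unique critical point is $K^{\ast}=\frac{1}{a}\log\frac{b}{a\rho}$, which is well-defined and positive under the standing hypothesis $\rho<b/a$. Substituting yields
\[
\phi(K^{\ast})=\log\frac{b}{a\rho}-1+\frac{a\rho}{b},
\]
the exact threshold announced in the statement. One checks that $\phi(K^{\ast})\leq aK^{\ast}$ (equivalent to $a\rho/b\leq 1$), so the admissibility constraint $K^{\ast}>\theta/a$ is automatic whenever $\theta<\phi(K^{\ast})$; choosing $K=K^{\ast}$ then produces an admissible $u\in\mathcal{U}_{\theta}$ with finite $\sup_{z\geq 0}[(\mathcal{A}u+\frac{\theta b}{a}zu)/u]$, and hence $\Gamma(\theta)<\infty$.

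Finally, when $\rho=0$ the linear-in-$z$ term reduces to $b(\theta/a-K)$, strictly negative for any $K>\theta/a$, so every real $\theta$ admits such a $K$ and $\Gamma(\theta)<\infty$ for all $\theta\in\mathbb{R}$. The argument is essentially a one-parameter convex optimization once the right exponential test function has been chosen; the only point requiring care is the simultaneous fulfillment of $K>\theta/a$ and $\theta<\phi(K)$, and this is precisely what the inequality $\phi(K^{\ast})\leq aK^{\ast}$ delivers under $\rho<b/a$.
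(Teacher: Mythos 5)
Your proof is correct and follows essentially the same path as the paper's: both take the exponential test function $u(z)=e^{Kz}$, reduce $\Gamma(\theta)$ to a supremum of $b(\theta/a-K)z+\lambda(z)(e^{Ka}-1)$, and then optimize over $K$; your $\phi(K)$ is exactly $-aF(K)$ in the paper's notation, the critical point $K^{\ast}=\frac{1}{a}\log\frac{b}{a\rho}$ coincides, and the paper likewise notes that the resulting threshold is strictly less than $aK^{\ast}$ so the admissibility constraint $K>\theta/a$ is automatically satisfied. Your explicit verification of $\phi(K^{\ast})\leq aK^{\ast}$ is, if anything, slightly more carefully spelled out than the paper's passing remark.
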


\begin{pf}
For $K\geq\frac{\theta}{a}$, we have $e^{Kz}\in\mathcal{U}_{\theta}$ and
%
\begin{eqnarray}
\Gamma(\theta)&\leq&\inf_{g\in\mathcal{U}_{\theta}}\sup_{z\geq0}
\frac{\mathcal{A}g(z)
+(({\theta b})/{a})zg(z)}{g(z)}\leq\sup_{z\geq0} \biggl\{\frac{\mathcal
{A}e^{Kz}}{e^{Kz}}+
\frac{\theta b}{a}z \biggr\}
\nonumber\\[-8pt]\\[-8pt]
&=&\sup_{z\geq0} \biggl\{- \biggl(bK-\frac{\theta b}{a} \biggr)z+
\lambda(z) \bigl(e^{Ka}-1\bigr) \biggr\}.
\nonumber
\end{eqnarray}
Define the function
%
\begin{equation}
F(K)=-K+\limsup_{z\rightarrow\infty}\frac{\lambda(z)}{bz}\cdot
\bigl(e^{Ka}-1\bigr).
\end{equation}
Then $F(0)=0$, $F$ is convex and $F(K)\rightarrow\infty$ as
$K\rightarrow\infty$ and its minimum is attained at
%
\begin{equation}
K^{\ast}=\frac{1}{a}\log\biggl(\frac{b}{a\limsup_{z\rightarrow\infty
}({\lambda(z)}/{z})} \biggr)>0,
\end{equation}
and $F(K^{\ast})<0$. Therefore, $\Gamma(\theta)<\infty$ for any
%
\begin{eqnarray}
\theta&<&-a\min_{K>0} \biggl\{-K+\limsup_{z\rightarrow\infty}
\frac{\lambda(z)}{bz}\cdot\bigl(e^{Ka}-1\bigr) \biggr\}
\nonumber\\[-8pt]\\[-8pt]
&=&\log\biggl(\frac{b}{a\limsup_{z\rightarrow\infty}({\lambda
(z)}/{z})} \biggr)-1 +\frac{a}{b}\cdot\limsup
_{z\rightarrow\infty}\frac{\lambda(z)}{z}<K^{\ast}a.
\nonumber
\end{eqnarray}
If $\limsup_{z\rightarrow\infty}\frac{\lambda(z)}{z}=0$, trying
$e^{Kz}\in\mathcal{U}_{\theta}$
for any $K>\frac{\theta}{a}$, we have $\Gamma(\theta)<\infty$ for any
$\theta$.
\end{pf}

\section{Large deviations for Markovian nonlinear Hawkes processes with sum of exponentials exciting function}\label{sec4}

In this section, we consider the Markovian nonlinear Hawkes processes
with sum of exponentials exciting functions,
that is, $h(t)=\sum_{i=1}^{d}a_{i}e^{-b_{i}t}$.
Let
%
\begin{equation}
Z_{i}(t)=\sum_{\tau_{j}<t}a_{i}e^{-b_{i}(t-\tau_{j})},
\qquad1\leq i\leq d
\end{equation}
and $Z_{t}=\sum_{i=1}^{d}Z_{i}(t)=\sum_{\tau_{j}<t}h(t-\tau_{j})$,
where $\tau_{j}$'s are the arrivals of the Hawkes process
with intensity $\lambda(Z_{t})=\lambda(Z_{1}(t)+\cdots+Z_{d}(t))$ at
time $t$. Observe that this is a special
case of the Markovian processes with\vspace*{2pt} jumps studied in Section~\ref
{ErgodicSection} with $\lambda(Z_{1}(t),Z_{2}(t),\ldots,Z_{d}(t))$
taking the form $\lambda(\sum_{i=1}^{d}Z_{i}(t))$.
It is easy to see that $(Z_{1},\ldots,Z_{d})$ is Markovian with generator
%
\begin{eqnarray}
\mathcal{A}f &=& -\sum_{i=1}^{d}b_{i}z_{i}
\frac{\partial f}{\partial z_{i}}
\nonumber\\[-8pt]\\[-8pt]
&&{} +\lambda\Biggl(\sum_{i=1}^{d}z_{i}
\Biggr)\cdot\bigl[f(z_{1}+a_{1},\ldots,z_{d}+a_{d})-f(z_{1},
\ldots,z_{d})\bigr].\nonumber
\end{eqnarray}
Here $b_{i}>0$ for any $1\leq i\leq d$ and $a_{i}$ can be negative. But
we restrict ourselves
to the set of $b_{i}$'s and $a_{i}$'s so that $h(t)=\sum
_{i=1}^{d}a_{i}e^{-b_{i}t}>0$ for any $t\geq0$
for the rest of this paper.
In particular, $h(0)=\sum_{i=1}^{d}a_{i}>0$. If $a_{i}>0$, then
$Z_{i}(t)\geq0$ almost surely; if $a_{i}<0$, then $Z_{i}(t)\leq0$
almost surely.

\begin{theorem}
Assume $\lim_{z\rightarrow\infty}\frac{\lambda(z)}{z}=0$, $\lambda(\cdot
)$ is continuous
and bounded below by a positive constant. Then
%
\begin{equation}
\lim_{t\rightarrow\infty}\frac{1}{t}\log\mathbb{E}
\bigl[e^{\theta N_{t}}\bigr] =\inf_{u\in\mathcal{U}_{\theta}}\sup
_{(z_{1},\ldots,z_{d})\in\mathcal{Z}} \Biggl\{\frac{\mathcal
{A}u}{u}+\frac{\theta}{\sum_{i=1}^{d}a_{i}} \sum
_{i=1}^{d}b_{i}z_{i}
\Biggr\},
\end{equation}
where $\mathcal{Z}=\{(z_{1},\ldots,z_{d})\dvtx  a_{i}z_{i}\geq0, 1\leq
i\leq d\}$ and
%
\begin{equation}
\mathcal{U}_{\theta}= \bigl\{u\in C_{1}\bigl(
\mathbb{R}^{d},\mathbb{R}^{+}\bigr),u=e^{f}, f\in
\mathcal{F} \bigr\},
\end{equation}
where
%
\begin{equation}
\mathcal{F}= \biggl\{f=g+\frac{\theta\sum_{i=1}^{d}z_{i}}{\sum
_{i=1}^{d}a_{i}}+L, L\in\mathbb{R}, g\in\mathcal{G}
\biggr\},
\end{equation}
where
%
\begin{equation}
\mathcal{G}= \Biggl\{\sum_{i=1}^{d}K
\varepsilon_{i}z_{i}+g, K>0, g\mbox{ is }C_{1}
\mbox{ with compact support} \Biggr\}.
\end{equation}
\end{theorem}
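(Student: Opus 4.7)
The plan is to mirror the proof of Theorem \ref{mainlimit} coordinate by coordinate. First, from $dZ_i(t)=-b_i Z_i(t)\,dt+a_i\,dN_t$ and summing, one obtains $N_t=\frac{1}{\sum_i a_i}\bigl(Z_t+\int_0^t\sum_i b_i Z_i(s)\,ds\bigr)$, which is why the potential $V(z_1,\ldots,z_d)=\frac{\theta}{\sum_i a_i}\sum_i b_i z_i$ appears in the claimed formula. For the upper bound I would apply Dynkin's formula and Gronwall's lemma exactly as in Theorem \ref{mainlimit}: for any $u\in\mathcal{U}_\theta$ with $\mathcal{A}u+Vu\le Mu$, one gets $\mathbb{E}[e^{\theta N_t}]\le c_1^{-1}u(0)e^{Mt}$, hence $\limsup_{t\to\infty}\frac{1}{t}\log\mathbb{E}[e^{\theta N_t}]\le\inf_{u\in\mathcal{U}_\theta}\sup_{z\in\mathcal{Z}}\{(\mathcal{A}u+Vu)/u\}$. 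Here the choice of $\mathcal{F}$ with the term $\sum_i K\epsilon_i z_i=K\sum_i|z_i|$ guarantees $u=e^{f}\ge c_1 e^{V\cdot(\cdot)/b_{\min}}$-type domination needed for the inequality to absorb the $\theta N_t$ exponential.

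For the lower bound I would introduce the tilted measure $\hat{\mathbb{P}}$ as in \eqref{tiltedP}, with intensity $\hat{\lambda}(z_1,\ldots,z_d)$ chosen so that the tilted process has a unique invariant measure $\hat{\pi}$ on $\mathcal{Z}$; existence and uniqueness of such $\hat{\pi}$ are supplied by Lemma \ref{ergodiclemma}. Jensen's inequality combined with the multidimensional ergodic theorem then yields
\begin{equation}
\liminf_{t\to\infty}\frac{1}{t}\log\mathbb{E}[e^{\theta N_t}]\ge\sup_{(\hat{\lambda},\hat{\pi})\in\mathcal{Q}_e}\left\{\int V\,d\hat{\pi}-\hat{H}(\hat{\lambda},\hat{\pi})\right\}.
\end{equation}
I would then enlarge $\mathcal{Q}_e$ to all of $\mathcal{Q}$ by enforcing invariance through the Lagrange-multiplier term $\inf_{f\in\mathcal{F}}\int\hat{\mathcal{A}}f\,d\hat{\pi}$, using the Echeverr\'ia-type argument from Theorem \ref{mainlimit} (the minimum principle for $\hat{\mathcal{A}}$ and density of $\mathcal{G}$ go through coordinate-wise, and uniqueness is given by the multidimensional case of Lemma \ref{ergodiclemma}).

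The minimax switch is the crux. As before, $F(\hat{\lambda}\hat{\pi},\hat{\pi},f)$ is concave and weakly upper semicontinuous in $(\hat{\lambda}\hat{\pi},\hat{\pi})$ and linear in $f$, so Ky Fan's theorem reduces matters to the tightness of the level sets $\{F\le C\}$ for each fixed $f=\sum_i K\epsilon_i z_i+g+\frac{\theta\sum_i z_i}{\sum_i a_i}+L\in\mathcal{F}$. Here the key computation is that the drift term $-\sum_i b_i z_i\partial_{z_i}f$ contributes $-K\sum_i b_i\epsilon_i z_i=-K\sum_i b_i|z_i|$, which together with the entropy lower bound $\lambda-\hat{\lambda}+\hat{\lambda}\log(\hat{\lambda}/\lambda)\ge 0$ yields uniform bounds $\int|z_i|\hat{\pi}\le C_3$ and $\int\hat{\lambda}\hat{\pi}\le C_4$ by the same $\hat{\lambda}<\sqrt{\lambda|z|}$ vs $\hat{\lambda}\ge\sqrt{\lambda|z|}$ dichotomy as in Theorem \ref{mainlimit}. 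These bounds, combined with the hypothesis $\lim_{z\to\infty}\lambda(z)/z=0$ where $z=\sum_i|z_i|$, give the tightness of both $\hat{\pi}$ and $\hat{\lambda}\hat{\pi}$ via Prokhorov.

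After the switch, the inner supremum in $\hat{\lambda}$ is attained at $\hat{\lambda}=\lambda e^{f(z+a)-f(z)}$, and the subsequent supremum in $\hat{\pi}$ collapses to a pointwise supremum over $z\in\mathcal{Z}$ because the integrand $V(z)+\lambda(z)(e^{f(z+a)-f(z)}-1)-\sum_i b_i z_i\partial_{z_i}f$ attains its maximum at a finite $z^*\in\mathcal{Z}$ (the linear coercive term $-K\sum_i b_i|z_i|$ dominates the sublinear $\lambda(z)$ at infinity). Setting $u=e^{f}$ then identifies the expression with $\inf_{u\in\mathcal{U}_\theta}\sup_{z\in\mathcal{Z}}\{\mathcal{A}u/u+V\}$, matching the upper bound. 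The main obstacle will be the tightness argument: verifying that the coercivity provided by $K\epsilon_i z_i$ genuinely controls each coordinate in the mixed-sign domain $\mathcal{Z}$, and that the multidimensional Echeverr\'ia-type invariance characterization is valid for the generator $\hat{\mathcal{A}}$ on $\mathcal{Z}$ rather than on $\mathbb{R}^+$.
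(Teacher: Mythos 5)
Your proposal follows the paper's own argument step for step: the same ODE identity $a_iN_t=Z_i(t)-Z_i(0)+\int_0^t b_iZ_i(s)\,ds$, the Feynman--Kac/Dynkin upper bound, the tilting-plus-ergodic lower bound, the Echeverr\'ia enlargement from $\mathcal{Q}_e$ to $\mathcal{Q}$, the Ky Fan minimax switch justified by level-set compactness, and the same sign-matched choice of coercive linear terms (yielding $\int|z_i|\hat{\pi}\le C_3$, $\int\hat{\lambda}\hat{\pi}\le C_5$) followed by the $\hat{\lambda}<\sqrt{\lambda|z|}$ versus $\hat{\lambda}\ge\sqrt{\lambda|z|}$ dichotomy for tightness. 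The two ``obstacles'' you flag at the end---coordinate-wise coercivity on the mixed-sign domain $\mathcal{Z}$ and the multidimensional Echeverr\'ia invariance characterization---are exactly the points the paper resolves, in the first case by picking $c_i$ so that $-c_iC_2+Kb_i\epsilon_i$ has the sign of $\epsilon_i$, and in the second by the multidimensional part of Lemma~\ref{ergodiclemma}; so the proposal is essentially identical to the paper's proof.
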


\begin{pf}
Notice that
%
\begin{equation}
dZ_{i}(t)=-b_{i}Z_{i}(t)\,dt+a_{i}\,dN_{t},
\qquad1\leq i\leq d.
\end{equation}
Hence $a_{i}N_{t}=Z_{i}(t)-Z_{i}(0)+\int_{0}^{t}b_{i}Z_{i}(s)\,ds$ and
%
\begin{equation}
\mathbb{E}\bigl[e^{\theta N_{t}}\bigr]=\mathbb{E} \Biggl[\exp\Biggl\{
\frac{\theta\sum_{i=1}^{d}Z_{i}(t)-Z_{i}(0)}{\sum_{i=1}^{d}a_{i}} +\frac
{\theta}{\sum_{i=1}^{d}a_{i}}\int_{0}^{t}
\sum_{i=1}^{d}b_{i}Z_{i}(s)\,ds
\Biggr\} \Biggr].\hspace*{-30pt}
\end{equation}
Following the same arguments in the proof of Theorem \ref{mainlimit},
we obtain the upper bound
%
\begin{equation}
\limsup_{t\rightarrow\infty}\frac{1}{t}\log\mathbb{E}
\bigl[e^{\theta N_{t}}\bigr]\leq\inf_{u\in\mathcal{U}_{\theta}} \sup
_{(z_{1},\ldots,z_{d})\in\mathcal{Z}} \Biggl\{\frac{\mathcal
{A}u}{u}+\frac{\theta}{\sum_{i=1}^{d}a_{i}}\sum
_{i=1}^{d}b_{i}z_{i}
\Biggr\}.
\end{equation}
As before, we can obtain the lower bound
%
\begin{eqnarray}
&&\liminf_{t\rightarrow\infty}\frac{1}{t}\log\mathbb{E}
\bigl[e^{\theta N_{t}}\bigr]\nonumber
\\
&&\qquad \geq\sup_{(\hat{\lambda},\hat{\pi})\in\mathcal{Q}_{e}}\int\bigl[\theta
\hat{\lambda}-\lambda+\hat{
\lambda} -\hat{\lambda}\log(\hat{\lambda}/\lambda) \bigr]\hat{
\pi}(dz_{1},\ldots,dz_{d})
\nonumber\\[-8pt]\\[-8pt]
&&\qquad \geq\sup_{(\hat{\lambda},\hat{\pi})\in\mathcal{Q}}\inf_{g\in\mathcal
{G}}\int\bigl[\theta
\hat{\lambda}-\lambda+\hat{\lambda}-\hat{\lambda}\log(\hat{\lambda
}/\lambda)+
\widehat{\mathcal{A}}g \bigr]\hat{\pi}
\nonumber
\\
&&\qquad =\sup_{(\hat{\lambda},\hat{\pi})\in\mathcal{Q}}\inf_{f\in\mathcal{F}}
\int\biggl[
\frac{\theta\sum_{i=1}^{d}b_{i}z_{i}}{\sum_{i=1}^{d}a_{i}}-\lambda+\hat
{\lambda}-\hat{\lambda}\log(\hat{\lambda}/\lambda
) +\widehat{\mathcal{A}}f \biggr]\hat{\pi}.
\nonumber
\end{eqnarray}
The equality in the last line above holds by taking $f=g+L+\frac{\theta
\sum_{i=1}^{d}z_{i}}{\sum_{i=1}^{d}a_{i}}\in\mathcal{F}$ for $g\in
\mathcal{G}$, where
%
\begin{equation}
\mathcal{G}= \Biggl\{\sum_{i=1}^{d}K
\varepsilon_{i}z_{i}+g, K>0, g\mbox{ is }C_{1}
\mbox{ with compact support} \Biggr\}.
\end{equation}
Here, $\varepsilon_{i}=a_{i}/|a_{i}|$, $1\leq i\leq d$. Define
%
\begin{equation}
F(\hat{\lambda}\hat{\pi},\hat{\pi},f)=\int\biggl[\frac{\theta\sum
_{i=1}^{d}b_{i}z_{i}}{\sum_{i=1}^{d}a_{i}}+\widehat{\mathcal{A}}f \biggr]\hat{\pi} -\widehat{H}(\hat{\lambda},\hat{\pi}).
\end{equation}
$F$ is linear in $f$ and hence convex in $f$. Also $\widehat{H}$ is weakly
lower semicontinuous and convex in $(\hat{\lambda}\hat{\pi},\hat{\pi})$.
Therefore, $F$ is concave in $(\hat{\lambda}\hat{\pi},\hat{\pi})$.
Furthermore, for any $f=\frac{\theta\sum_{i=1}^{d}z_{i}}{\sum
_{i=1}^{d}a_{i}}+\sum_{i=1}^{d}K\varepsilon_{i}z_{i}+g+L\in\mathcal{F}$,
%
\begin{eqnarray}
F(\hat{\lambda}\hat{\pi},\hat{\pi},f)&=&\int\Biggl[\theta+\sum
_{i=1}^{d}K\varepsilon_{i}a_{i}
\Biggr]\hat{\lambda}\hat{\pi}
\nonumber\\[-8pt]\\[-8pt]
&&{} -\int\sum_{i=1}^{d}K
\varepsilon_{i}b_{i}z_{i}\hat{\pi}-\widehat{H}(\hat{
\lambda},\hat{\pi})+\int\widehat{\mathcal{A}}g\hat{\pi}.\nonumber
\end{eqnarray}
If $\lambda_{n}\pi_{n}\rightarrow\gamma_{\infty}$ and $\pi
_{n}\rightarrow\pi_{\infty}$ weakly,
then, since $g$ is $C_{1}$ with compact support, we have
%
\begin{eqnarray}
&& \int\Biggl[\theta+\sum_{i=1}^{d}K
\varepsilon_{i}a_{i} \Biggr]\lambda_{n}
\pi_{n}+\int\widehat{\mathcal{A}}g\pi_{n}
\nonumber\\[-8pt]\\[-8pt]
&&\qquad \rightarrow\int
\Biggl[\theta+\sum_{i=1}^{d}K
\varepsilon_{i}a_{i} \Biggr]\gamma_{\infty}+\int\widehat{\mathcal{A}}g\pi_{\infty}.\nonumber
\end{eqnarray}
Since $-\sum_{i=1}^{d}K\varepsilon_{i}b_{i}z_{i}$ is continuous and
nonpositive on $\mathcal{Z}$, we have
%
\begin{equation}
\limsup_{n\rightarrow\infty}\int\Biggl[-\sum_{i=1}^{d}K
\varepsilon_{i}b_{i}z_{i} \Biggr]
\pi_{n} \leq\int\Biggl[-\sum_{i=1}^{d}K
\varepsilon_{i}b_{i}z_{i} \Biggr]
\pi_{\infty}.
\end{equation}
Hence, we conclude that $F$ is upper semicontinuous in the weak topology.

In order to apply the minmax theorem, we want to prove the compactness
in the weak topology of the level set
%
\begin{equation}
\biggl\{(\hat{\lambda}\hat{\pi},\hat{\pi})\dvtx \int\biggl[-\frac{\theta\sum
_{i=1}^{d}b_{i}z_{i}}{\sum_{i=1}^{d}a_{i}} -
\widehat{\mathcal{A}}f \biggr]\hat{\pi}+\widehat{H}(\hat{\lambda},\hat{\pi
})\leq C \biggr
\}.
\end{equation}
For any $f=\frac{\theta\sum_{i=1}^{d}z_{i}}{\sum_{i=1}^{d}a_{i}}+\sum
_{i=1}^{d}K\varepsilon_{i}z_{i}+g+L\in\mathcal{F}$,
where $g$ is $C_{1}$ with compact support, etc., there exist some
$C_{1}, C_{2}>0$ such that
%
\begin{eqnarray}
\qquad C_{1}&\geq&\widehat{H}+\sum_{i=1}^{d}Kb_{i}
\varepsilon_{i}\int z_{i}\hat{\pi}-C_{2}\int\hat{
\lambda}\hat{\pi}\nonumber
\\
&\geq&\int_{\hat{\lambda}\geq\sum_{i=1}^{d}c_{i}z_{i}+\ell} \bigl[\lambda
-\hat{\lambda} +\hat{\lambda}
\log(\hat{\lambda}/\lambda) \bigr]\hat{\pi}\nonumber\\
&&{}+\sum_{i=1}^{d}Kb_{i}
\varepsilon_{i}\int z_{i}\hat{\pi}
\nonumber
\\[-8pt]
\\[-8pt]
\nonumber
&&{}-C_{2}\int_{\hat{\lambda}\geq\sum_{i=1}^{d}c_{i}z_{i}+\ell}\hat{\lambda
}\hat{
\pi}-C_{2}\int_{\hat{\lambda}
<\sum_{i=1}^{d}c_{i}z_{i}+\ell}\hat{\lambda}\hat{\pi}
\\
&\geq&\biggl[\min_{(z_{1},\ldots,z_{d})\in\mathcal{Z}}\log\frac
{c_{1}z_{1}+\cdots+c_{d}z_{d}+\ell}{\lambda(z_{1}+\cdots+z_{d})}-1-C_{2}
\biggr] \int_{\hat{\lambda}\geq\sum_{i=1}^{d}c_{i}z_{i}+\ell}\hat
{\lambda}\hat{\pi}
\nonumber
\\
&&{}+\sum_{i=1}^{d}[-c_{i}\cdot
C_{2}+Kb_{i}\varepsilon_{i}]\int
z_{i}\hat{\pi}-\ell C_{2}.
\nonumber
\end{eqnarray}
If $a_{i}>0$, then $\varepsilon_{i}>0$, pick up $c_{i}>0$ such that
$-c_{i}\cdot C_{2}+Kb_{i}\varepsilon_{i}>0$.
If $a_{i}<0$, then $\varepsilon_{i}<0$, pick up $c_{i}$ such that
$-c_{i}\cdot C_{2}+Kb_{i}\varepsilon_{i}<0$.
Finally, choose $\ell$ big enough such that the big bracket above is
positive. Then
%
\begin{equation}
\int|z_{i}|\hat{\pi}\leq C_{3},\qquad\int
_{\hat{\lambda}\geq\sum_{i=1}^{d}c_{i}z_{i}+\ell}\hat{\lambda}\hat{\pi
}\leq C_{4}.
\end{equation}
Hence, $\int\hat{\lambda}\hat{\pi}\leq C_{5}$ and $\widehat{H}\leq C_{6}$.
We can use a method similar to the proof of Theorem \ref{mainlimit} to
show that
%
\begin{equation}
\lim_{\ell\rightarrow\infty}\sup_{n}\int
_{|z_{i}|>\ell}\lambda_{n}\pi_{n}=0,\qquad1\leq
i\leq d.
\end{equation}
For any $(\lambda_{n}\pi_{n},\pi_{n})\in\mathcal{R}$, we can find a
subsequence that converges in the weak topology by Prokhorov's theorem.
Therefore,
%
\begin{eqnarray}
\qquad &&\liminf_{t\rightarrow\infty}\frac{1}{t}\log\mathbb{E}
\bigl[e^{\theta N_{t}}\bigr]\nonumber
\\
&&\qquad \geq\sup_{(\hat{\lambda},\hat{\pi})\in\mathcal{Q}}\inf_{f\in\mathcal
{F}} \int\biggl[
\frac{\theta\sum_{i=1}^{d}b_{i}z_{i}}{\sum_{i=1}^{d}a_{i}}-\lambda+\hat
{\lambda}-\hat{\lambda} \log(\hat{\lambda}/\lambda
)+\widehat{\mathcal{A}}f \biggr]\hat{\pi}
\nonumber
\\
&&\qquad =\inf_{f\in\mathcal{F}}\sup_{\hat{\pi}}\sup
_{\hat{\lambda}}\int\biggl[\frac{\theta\sum_{i=1}^{d}b_{i}z_{i}}{\sum
_{i=1}^{d}a_{i}}-\lambda+\hat{\lambda}-
\hat{\lambda}\log(\hat{\lambda}/\lambda)+\widehat{\mathcal{A}}f \biggr
]\hat{\pi}
\nonumber\\
&&\qquad =\inf_{f\in\mathcal{F}}\sup_{(z_{1},\ldots,z_{d})\in\mathcal{Z}}\frac
{\theta\sum_{i=1}^{d}b_{i}z_{i}}{\sum_{i=1}^{d}a_{i}} +
\lambda\bigl(e^{f(z_{1}+a_{1},\ldots,z_{d}+a_{d})-f(z_{1},\ldots,z_{d})}-1\bigr)
\\
&&\quad\qquad{} -\sum_{i=1}^{d}b_{i}z_{i}
\frac{\partial f}{\partial z_{i}}
\nonumber\\
&&\qquad \geq\inf_{u\in\mathcal{U}_{\theta}}\sup_{(z_{1},\ldots,z_{d})\in
\mathcal{Z}} \Biggl\{
\frac{\mathcal{A}u}{u} +\frac{\theta}{\sum_{i=1}^{d}a_{i}}\sum
_{i=1}^{d}b_{i}z_{i}
\Biggr\}.
\nonumber
\end{eqnarray}
That is because optimizing over $\hat{\lambda}$, we get $\hat{\lambda
}=\lambda e^{f(z_{1}+a_{1},\ldots,z_{d}+a_{d})-f(z_{1},\ldots,z_{d})}$
and finally for each $f\in\mathcal{F}$, $u=e^{f}\in\mathcal{U}_{\theta}$.
\end{pf}

\begin{theorem}
Assume $\lim_{z\rightarrow\infty}\frac{\lambda(z)}{z}=0$, $\lambda(\cdot
)$ is positive and bounded below by some positive constant.
Then, $(\frac{N_{t}}{t}\in\cdot)$ satisfies the large deviation
principle with the rate function $I(\cdot)$ as the
Fenchel--Legendre transform of $\Gamma(\cdot)$,
%
\begin{equation}
I(x)=\sup_{\theta\in\mathbb{R}} \bigl\{\theta x-\Gamma(\theta) \bigr\},
\end{equation}
where
%
\begin{equation}
\Gamma(\theta)=\sup_{(\hat{\lambda},\hat{\pi})\in\mathcal{Q}_{e}}\int
\bigl[\theta\hat{\lambda}-
\lambda+\hat{\lambda} -\hat{\lambda}\log(\hat{\lambda}/\lambda) \bigr
]\hat{\pi}.
\end{equation}
\end{theorem}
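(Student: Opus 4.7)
The plan is to follow the same architecture as the single-exponential case, with the multidimensional state space $\mathcal{Z}$ replacing $\mathbb{R}^{+}$. The two inputs we need are (i) finiteness of $\Gamma(\theta)$ for sufficiently many $\theta$, so that the G\"artner--Ellis theorem applies, and (ii) a matching lower bound on $\mathbb{P}(N_t/t \in B_\epsilon(x))$ realized via a tilted measure. The preceding theorem already delivers $\lim_{t\to\infty}\frac{1}{t}\log\mathbb{E}[e^{\theta N_t}] = \Gamma(\theta)$, so both directions of the LDP can be obtained once $\Gamma$ is seen to be finite and $\Lambda$ convex.

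For the upper bound, I would first establish a multidimensional analog of Lemma \ref{Gammafinite}: testing $u = \exp\bigl(\sum_i K\epsilon_i z_i\bigr) \in \mathcal{U}_\theta$ inside the variational formula gives
\begin{equation*}
\Gamma(\theta) \leq \sup_{(z_1,\ldots,z_d)\in\mathcal{Z}}\left\{-K\sum_{i=1}^{d}b_i|z_i| + \lambda\Bigl(\sum_{i=1}^{d}z_i\Bigr)\bigl(e^{K\sum_i\epsilon_i a_i}-1\bigr) + \frac{\theta}{\sum_i a_i}\sum_{i=1}^{d}b_i z_i\right\}.
\end{equation*}
Since $\epsilon_i a_i = |a_i|$ and $\lim_{z\to\infty}\lambda(z)/z = 0$, for each fixed $\theta$ I can choose $K$ large enough that the linear-in-$|z_i|$ terms dominate and the supremum is finite. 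Thus $\Gamma(\theta) < \infty$ for every $\theta \in \mathbb{R}$, and G\"artner--Ellis gives
\begin{equation*}
\limsup_{t\to\infty}\frac{1}{t}\log\mathbb{P}(N_t/t \in C) \leq -\inf_{x \in C}I(x)
\end{equation*}
for closed $C$.

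For the lower bound I would repeat the Girsanov change-of-measure step used in the single-exponential LDP. Fix $x > 0$ and $\epsilon > 0$, define the tilted probability $\hat{\mathbb{P}}$ with intensity $\hat\lambda$, apply Jensen's inequality to
$\mathbb{P}(N_t/t \in B_\epsilon(x)) = \hat{\mathbb{E}}[\mathbf{1}_{N_t/t \in B_\epsilon(x)}\,d\mathbb{P}/d\hat{\mathbb{P}}]$,
and pass to the limit using the ergodic theorem for $(Z_1(t),\ldots,Z_d(t))$ guaranteed by Lemma \ref{ergodiclemma} whenever $(\hat\lambda,\hat\pi) \in \mathcal{Q}_e$ with $\int \hat\lambda\,\hat\pi = x$. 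This yields
\begin{equation*}
\liminf_{t\to\infty}\frac{1}{t}\log\mathbb{P}(N_t/t \in B_\epsilon(x)) \geq -\Lambda(x),\qquad \Lambda(x) := \inf_{(\hat\lambda,\hat\pi)\in\mathcal{Q}_e^x}\hat H(\hat\lambda,\hat\pi).
\end{equation*}
The convexity of $\Lambda$ is proved exactly as in Lemma \ref{convexity}: given two feasible pairs, form the convex combination $\hat\pi_3 = \alpha\hat\pi_1 + \beta\hat\pi_2$ and the corresponding Radon--Nikodym-weighted $\hat\lambda_3$, verify $(\hat\lambda_3,\hat\pi_3) \in \mathcal{Q}_e^{\alpha x_1+\beta x_2}$ by linearity of $\int \hat{\mathcal{A}} f\,\hat\pi$ in $(\hat\lambda,\hat\pi)$, and apply Jensen to the convex function $x\log x$ to obtain $\hat H(\hat\lambda_3,\hat\pi_3) \leq \alpha\hat H(\hat\lambda_1,\hat\pi_1) + \beta\hat H(\hat\lambda_2,\hat\pi_2)$. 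Combined with the identity
$\Gamma(\theta) = \sup_x\{\theta x - \Lambda(x)\}$
that follows from rewriting the supremum in the definition of $\Gamma$ by partitioning $\mathcal{Q}_e = \bigsqcup_x \mathcal{Q}_e^x$, convex duality then gives $\Lambda = I$, completing the LDP lower bound on open sets.

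The main obstacle I anticipate is the finiteness of $\Gamma(\theta)$ when some $a_i$ are negative: the test function $u = e^{K\sum_i \epsilon_i z_i}$ must be chosen so that both the drift term $-\sum_i b_i z_i \partial_{z_i} u$ and the jump term $\lambda(u(z+a)-u(z))$ remain controllable, and this is why the correct exponential factor is $\sum_i \epsilon_i z_i = \sum_i |z_i|$ rather than $\sum_i z_i$. Beyond this choice the remainder of the argument is essentially mechanical and parallels the single-exponential proof line by line.
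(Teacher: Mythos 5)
Your proposal is correct and follows exactly the route the paper intends: the paper's own proof of this theorem is a single sentence declaring it identical to the exponential case, and you have correctly unpacked what that entails — the multidimensional analog of the $\Gamma$-finiteness lemma via the test function $u = e^{K\sum_i\epsilon_i z_i}$ (or, more precisely, $u = e^{\sum_i K\epsilon_i z_i + \theta\sum_i z_i/\sum_i a_i} \in \mathcal{U}_\theta$, which is equivalent for the Feynman--Kac bound), G\"artner--Ellis for the upper bound, Girsanov tilting plus the ergodic lemma for the lower bound, and the convexity of $\Lambda$ to conclude $\Lambda = I$. Your observation that $\sum_i \epsilon_i z_i = \sum_i |z_i|$ is the right choice to control both the drift and jump terms when some $a_i < 0$ is exactly the detail the paper leaves implicit.
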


\begin{pf}
The proof is the same as in the case of exponential $h(\cdot)$.
\end{pf}

\section{Large deviations for linear Hawkes processes: An alternative proof}\label{sec5}

In this section, we use our method to recover the result proved in
Bordenave and Torrisi \cite{Bordenave}.
We prove the existence of the limit of logarithmic moment generating
function first.
The strategy is to use the tilting method to prove the lower bound.
This requires an ergodic lemma, which we state as Lemma \ref{ergodicformula}.
For the upper bound, we can opitimize over a special class of testing
functions for the linear rate with the sum
of exponential exciting function $h_{n}$. Any continuous and integrable
$h$ can be approximated by a sequence $h_{n}$.
By a coupling argument, we can use that to approximate the upper bound
for the logarithmic moment generating function
when the exciting function is $h$. Finally, by a tilting argument for
the lower bound and the G\"{a}rtner--Ellis theorem for the upper bound,
we can prove the large deviations for the linear Hawkes processes.

\begin{lemma}\label{ergodicformula}
Assume $\lambda(z)=\alpha+\beta z$ and $\mu=\int_{0}^{\infty
}h(t)\,dt<\infty$. If $\beta\mu<1$,
then there exists a stationary and ergodic probability measure $\pi$
for $Z_{t}$ and $\int z\pi=\frac{\alpha\mu}{1-\beta\mu}$.
\end{lemma}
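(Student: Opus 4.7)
The plan is to combine the immigration-birth (Galton-Watson) representation for the linear Hawkes process with a standard moment computation under stationarity. First I would extend the linear Hawkes process from $\mathbb{R}^{+}$ to the whole real line via its Poisson cluster representation: immigrants arrive on $\mathbb{R}$ as a homogeneous Poisson process of rate $\alpha$, and each immigrant independently generates a cluster of descendants whose generations are governed by inhomogeneous Poisson processes with mean offspring density $\beta h(\cdot)$. Under $\beta \mu < 1$, the expected cluster size $\mathbb{E}[S] = 1/(1-\beta\mu)$ is finite, so the ensuing superposition is a locally finite simple point process $N$ on $\mathbb{R}$ with stationary law; call this the stationary Hawkes process (existence and uniqueness are exactly Br\'{e}maud–Massouli\'{e} \cite{Bremaud} in the linear regime).

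Next I would define $\pi$ as the law of $Z_{0} = \int_{-\infty}^{0} h(-s)\,N(ds)$ under this stationary version. Because $N$ is stationary under time shifts and $Z_{t}$ is a measurable functional of the shifted process, the marginal law of $Z_{t}$ is $\pi$ for every $t$, giving stationarity. To identify the first moment, I would apply Campbell's formula together with $\int_{0}^{\infty} h(t)\,dt = \mu < \infty$:
\begin{equation}
\int z\,\pi(dz) \;=\; \mathbb{E}[Z_{0}] \;=\; \mathbb{E}\!\left[\int_{-\infty}^{0} h(-s)\,N(ds)\right] \;=\; \mu\,\mathbb{E}[\lambda_{0}],
\end{equation}
while the intensity relation $\lambda_{t} = \alpha + \beta Z_{t}$ combined with stationarity gives $\mathbb{E}[\lambda_{0}] = \alpha + \beta \int z\,\pi(dz)$. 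Solving the resulting linear equation yields $\int z\,\pi = \alpha\mu/(1-\beta\mu)$ provided $\beta\mu < 1$.

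For ergodicity, I would exploit the cluster representation directly. Distinct immigrants generate independent clusters, and the immigration process is a homogeneous Poisson process on $\mathbb{R}$, which is ergodic (in fact mixing) under time shifts. A standard argument for marked/Poisson cluster processes (see Daley and Vere-Jones \cite{Daley}) then transfers ergodicity to the cluster superposition $N$, hence to any functional thereof such as $Z_{t}$; in particular, $\pi$ is ergodic under the shift dynamics.

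The main obstacle is the ergodicity step, where one must make sure that the finiteness of cluster sizes (guaranteed by $\beta\mu < 1$) together with the Poisson immigration really does deliver ergodicity of the shifted process, and that $Z_{t}$ inherits this from $N$. A clean way around it is to quote Br\'{e}maud–Massouli\'{e} \cite{Bremaud} for both the existence of the unique stationary version and the ergodicity under the contraction condition $\beta\mu < 1$, leaving only the explicit computation of $\int z\,\pi$ to be done by hand as above.
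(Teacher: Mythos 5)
Your proposal is correct and follows essentially the same route as the paper: both defer the existence, stationarity, and ergodicity of the linear Hawkes process to the cluster (immigration-birth) representation literature, and both compute $\int z\,\pi$ by taking expectations of $Z_{t}=\int_{-\infty}^{t}h(t-s)\,dN_{s}$ under stationarity to get $\int z\,\pi=\mu\,\mathbb{E}[\lambda_{0}]=\mu\bigl(\alpha+\beta\int z\,\pi\bigr)$ and solving. The paper cites Hawkes and Oakes \cite{HawkesII} for the ergodicity while you cite Br\'{e}maud--Massouli\'{e} \cite{Bremaud} and sketch the cluster argument, but the substance is the same.
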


\begin{pf}
The ergodicity is a well-known result for linear Hawkes process; see
Hawkes and Oakes \cite{HawkesII}.
Let $\pi$ be the invariant probability measure for $Z_{t}$, then
%
\begin{equation}
\lim_{t\rightarrow\infty}\frac{N_{t}}{t}=\int\lambda(z)\pi(dz)=\alpha+
\beta\int z\pi(dz).
\end{equation}
If $Z_{t}$ is invariant in $t$, taking expectations to $Z_{t}=\int
_{-\infty}^{t}h(t-s)\,dN_{s}$,
%
\begin{eqnarray}
 \mathbb{E}[Z_{t}]&=&\int z\pi(dz)=\int\lambda(z)\pi(dz)\int
_{-\infty}^{t}h(t-s)\,ds
\nonumber
\\[-8pt]
\\[-8pt]
\nonumber
&=&\mu\int\lambda(z)\pi(dz),
\end{eqnarray}
which implies that $\int z\pi=\frac{\alpha\mu}{1-\beta\mu}$.
\end{pf}

\begin{remark}
In Lemma \ref{ergodicformula}, we assumed that $\lambda(z)=\alpha+\beta
z$ and $\beta\Vert h\Vert_{L^{1}}<1$. However, when do the LDP
for linear Hawkes process and when we prove Theorem~\ref{linearcase},
we assume that $\lambda(z)=\nu+z$ since $\lambda(z)=\nu+\beta z$ is
equivalent to the case $\lambda(z)=\nu+z$ if we change $h(\cdot)$ to
$\beta h(\cdot)$. The reason we used $\lambda(z)=\alpha+\beta z$
in Lemma \ref{ergodicformula} is because we need to use it when we tilt
$\lambda(z)=\nu+z$ to $K\lambda(z)=K\nu+Kz$ in the proof of lower bound
in Theorem \ref{linearcase}.
\end{remark}

\begin{lemma}\label{littleh}
If $h(t)>0$, $\int_{0}^{\infty}h(t)\,dt<\infty$, $\lim_{t\rightarrow\infty
}h(t)=0$, and $h$ is continuous,
then $h$ can be approximated by a sum of exponentials both in $L^{1}$
and $L^{\infty}$ norms.
\end{lemma}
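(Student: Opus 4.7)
The plan is to move to $[0,1]$ via the substitution $s=e^{-t}$ and apply the Weierstrass theorem, but with a twist. The naive attempt---approximating $H(s):=h(-\log s)$ uniformly by a polynomial $P$ and setting $g(t):=P(e^{-t})$---succeeds for the $L^\infty$ norm but is insufficient for $L^1$, because the change of variables gives $\|h-g\|_{L^1}=\int_0^1|H-P|\,ds/s$, and the measure $ds/s$ has infinite mass near $s=0$. The workaround is to (i) first modify $h$ to a function $h_1$ with exponential tail, and then (ii) apply Weierstrass not to $H_1(s):=h_1(-\log s)$ directly but to the rescaled function $\Phi(s):=H_1(s)/s$, which will control both norms simultaneously.

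Concretely: given $\epsilon>0$, pick $T$ so large that $|h(t)|<\epsilon/4$ for $t\geq T$ and $\int_T^\infty h(t)\,dt<\epsilon/4$, and set $h_1(t)=h(t)$ for $t\leq T$ and $h_1(t)=h(T)e^{-(t-T)}$ for $t>T$. Elementary estimates give $\|h-h_1\|_\infty<\epsilon/2$ and $\|h-h_1\|_{L^1}<\epsilon/2$, and $h_1$ is continuous, integrable, and vanishes at infinity. Because $H_1(s)=h(T)e^T\cdot s$ for $s\leq e^{-T}$, the ratio $\Phi(s):=H_1(s)/s$ extends continuously to $[0,1]$ with $\Phi(0)=h(T)e^T$, so Weierstrass produces a polynomial $Q(s)=\sum_{k=0}^n d_k s^k$ with $\|\Phi-Q\|_{\infty,[0,1]}<\eta$ for any prescribed $\eta>0$. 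The candidate approximation is
\[
g(t) \;:=\; e^{-t} Q(e^{-t}) \;=\; \sum_{k=0}^n d_k\, e^{-(k+1)t},
\]
a sum of exponentials with positive exponents. Writing $s=e^{-t}$, one checks $|h_1(t)-g(t)|=s|\Phi(s)-Q(s)|\leq\eta$, hence $\|h_1-g\|_\infty\leq\eta$; while the change of variables $dt=-ds/s$ gives $\|h_1-g\|_{L^1}=\int_0^1|\Phi(s)-Q(s)|\,ds\leq\eta$. Taking $\eta=\epsilon/2$ and combining with the first estimate finishes the proof.

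The only conceptual point---and hence the main obstacle---is recognizing that Weierstrass should be applied to $\Phi=H_1/s$ rather than to $H_1$ itself: the factor of $s$ absorbs the $1/s$ from the Jacobian in the $L^1$ integral, while the bound $s\leq 1$ preserves the uniform estimate. The tail surgery is a routine device needed only to ensure that $\Phi$ is continuous at $s=0$; in the absence of some exponential decay of $h$ one cannot divide by $s$ while retaining continuity, which is why the truncation step is indispensable.
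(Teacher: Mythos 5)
Your proof is correct and takes a genuinely different route from the paper. The paper invokes the Stone--Weierstrass theorem on the one-point compactification $[0,\infty]$ with the subalgebra generated by $\{1, e^{-t}, e^{-2t},\ldots\}$ to obtain an $L^\infty$ approximant $h_n$, and then upgrades to $L^1$ by a separate patching step: it multiplies $h_n$ by $e^{-\delta_n t}$ for a slowly vanishing sequence $\delta_n\downarrow 0$ and uses two applications of dominated convergence to show $\|h - h_n e^{-\delta_n t}\|_{L^1}\to 0$ while preserving $\|h - h_n e^{-\delta_n t}\|_{L^\infty}\to 0$. Your approach---tail surgery to force exponential decay of $h_1$, then applying the classical Weierstrass theorem on $[0,1]$ to the rescaled function $\Phi = H_1/s$ rather than to $H_1$---handles both norms in a single step: the prefactor $s$ you reinsert in $g(t)=e^{-t}Q(e^{-t})$ exactly absorbs the Jacobian $1/s$ in the $L^1$ change of variables while $s\leq 1$ preserves the uniform bound. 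This is cleaner and more elementary than the paper's two-stage argument, and avoids choosing the sequence $\delta_n$ by hand.

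One item the paper addresses that you omit: the approximating sum of exponentials needs to be \emph{positive}, since downstream (in Theorem~\ref{linearcase} and Section~6) it is used as an exciting function for a Hawkes process. A Weierstrass polynomial $Q$ approximating the nonnegative $\Phi$ can still dip negative where $\Phi$ is small, so your $g$ need not be nonnegative. The paper's fix is to apply Stone--Weierstrass to $\sqrt{h}$ and then square, which yields a nonnegative sum of exponentials. The same device grafts onto your construction with no loss: approximate $\sqrt{\Phi}$ uniformly on $[0,1]$ by a polynomial $R$, set $Q=R^2\geq 0$, and note that $\|\Phi - R^2\|_\infty \leq \|\sqrt{\Phi}-R\|_\infty\bigl(2\|\sqrt{\Phi}\|_\infty + \|\sqrt{\Phi}-R\|_\infty\bigr)$ is small, so the $L^1$ and $L^\infty$ estimates survive and $g(t)=e^{-t}R(e^{-t})^2\geq 0$.
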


\begin{pf}
The Stone--Weierstrass theorem says that if $X$ is a compact Hausdorff
space and suppose $A$ is a subspace of $C(X)$
with the following properties: (i)~If $f,g\in A$, then $f\times g\in
A$. (ii) $1\in A$.
(iii) If $x,y\in X$, then we can find an $f\in A$ such that $f(x)\neq
f(y)$, then $A$ is dense in $C(X)$ in $L^{\infty}$ norm.
Consider $X=\mathbb{R}_{\geq0}\cup\{\infty\}=[0,\infty]$ that is compactified
and $C[0,\infty]$ consists of continuous functions vanishing at $\infty
$ and the constant function~$1$.

By the Stone--Weierstrass theorem, the linear combination of $1$,
$e^{-t}$, $e^{-2t}$, etc., is dense in $C[0,\infty]$.
In other words, for any continuous function $h$ on $C[0,\infty]$, we have
%
\begin{equation}
\sup_{t\geq0}\Biggl\llvert h(t)-\sum
_{j=0}^{n}a_{j}e^{-jt}\Biggr
\rrvert\leq\varepsilon.
\end{equation}
In fact, since $h(\infty)=0$, we get $|a_{0}|\leq\varepsilon$. Thus
%
\begin{equation}
\sup_{t\geq0}\Biggl\llvert h(t)-\sum
_{j=1}^{n}a_{j}e^{-jt}\Biggr
\rrvert\leq2\varepsilon.
\end{equation}
However, $\sum_{j=1}^{n}a_{j}e^{-jt}$ may not be positive. We can
approximate $\sqrt{h(t)}$ first by a sum of exponentials
and then approximate $h(t)$ by the square of that sum of exponentials,
which is again a sum of exponentials but positive this time.

Indeed, we can approximate $h(t)$ by the sum of exponentials in $L^{1}$
norm as well.
Suppose $\Vert h-h_{n}\Vert_{L^{\infty}}\rightarrow0$, where $h_{n}$
is a\vspace*{2pt} sum of exponentials.
Then, by dominated convergence theorem, for any $\delta>0$, $\int
|h-h_{n}|e^{-\delta t}\,dt\rightarrow0$ as $n\rightarrow\infty$.
Thus, we can find a sequence $\delta_{n}>0$ such that $\delta
_{n}\rightarrow0$ as $n\rightarrow\infty$ and $\int|h-h_{n}|e^{-\delta
_{n}t}\,dt\rightarrow0$.
By dominated convergence theorem again, $\int h(1-e^{-\delta
_{n}t})\,dt\rightarrow0$.
Hence, we have $\int|h-h_{n}e^{-\delta_{n}t}|\,dt\rightarrow0$ as
$n\rightarrow\infty$, where $h_{n}e^{-\delta_{n}t}$ is a sum of exponentials.

We will show that $h_{n}e^{-\delta_{n}t}$ converges to $h$ in $L^{\infty
}$ as well.
%
\begin{equation}
\big\Vert h-h_{n}e^{-\delta_{n}t}\big\Vert_{L^{\infty}}\leq\Vert
h-h_{n}\Vert_{L^{\infty}}+\big\Vert h_{n}-h_{n}e^{-\delta_{n}t}
\big\Vert_{L^{\infty}}.
\end{equation}
Notice that $(1-e^{-\delta_{n}t})h_{n}\leq(1-e^{-\delta
_{n}t})(h(t)+\varepsilon)$. Since $h(\infty)=0$,
there exists some $M>0$, such that for $t>M$, $h(t)\leq\varepsilon$ so
that $(1-e^{-\delta_{n}t})h_{n}\leq2\varepsilon$ for $t>M$.
For $t\leq M$, $(1-e^{-\delta_{n}t})h_{n}\leq(1-e^{-\delta_{n}M})(\Vert
h\Vert_{L^{\infty}}+\varepsilon)$ which is small if $\delta_{n}$ is small.
\end{pf}

\begin{theorem}\label{linearcase}
Assume $\lambda(z)=\nu+z$, $\nu>0$. $h(\cdot)$ satisfies the
assumptions in Lemma \ref{littleh} and $\int_{0}^{\infty}h(t)\,dt<1$. We have
%
\begin{equation}
\lim_{t\rightarrow\infty}\frac{1}{t}\log\mathbb{E}
\bigl[e^{\theta N_{t}}\bigr]=\nu(x-1),
\end{equation}
where $x$ is the minimal solution to $x=e^{\theta+\mu(x-1)}$, where $\mu
=\int_{0}^{\infty}h(t)\,dt$.
\end{theorem}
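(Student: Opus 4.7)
The goal is to sandwich $\Gamma(\theta) := \lim_{t \to \infty} t^{-1} \log \mathbb{E}[e^{\theta N_t}]$ between matching lower and upper bounds, both equal to $\nu(x-1)$, where $x$ is the minimal positive root of $x = e^{\theta + \mu(x-1)}$. For the lower bound, I would mimic the tilting argument of Theorem \ref{mainlimit}: for any $K \in (0, 1/\mu)$, tilt as in \eqref{tiltedP} with $\hat\lambda(z) = K(\nu + z) = K\lambda(z)$. Under $\hat{\mathbb{P}}$, $N$ is then a linear Hawkes process with intensity $K\nu + Kz$ and the same excitation $h$, so Lemma \ref{ergodicformula} (applied with $\alpha = K\nu$, $\beta = K$, valid since $K\mu < 1$) provides the unique ergodic invariant $\hat\pi$ with $\int z\, d\hat\pi = K\nu\mu / (1 - K\mu)$ and $\int \hat\lambda\, d\hat\pi = K\nu / (1 - K\mu)$. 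The Jensen / ergodic-theorem chain from Theorem \ref{mainlimit} then yields
\begin{equation*}
\liminf_{t \to \infty} \tfrac{1}{t} \log \mathbb{E}[e^{\theta N_t}] \;\geq\; \frac{\nu\bigl(K\theta + K - 1 - K \log K\bigr)}{1 - K\mu},
\end{equation*}
and differentiating in $K$ shows the maximum is attained when $\log K = \theta + \mu(K-1)$. Substituting the minimal root $K = x$ collapses the right-hand side neatly to $\nu(x-1)$.

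\textbf{Upper bound.} I first establish the upper bound for $h = h_n = \sum_{i=1}^d a_i e^{-b_i t}$ a sum of exponentials with $a_i > 0$, via the Feynman--Kac / Gronwall scheme of Theorem \ref{mainlimit} applied to the explicit test function $u(z) = \exp(\sum_i k_i z_i)$. Demanding that $\mathcal{A}u/u + \theta(\sum_j a_j)^{-1} \sum_i b_i z_i$ be independent of $z$ forces $b_j k_j = (x-1) + \theta b_j / \sum_i a_i$ for each $j$, where $x := e^{\sum_j k_j a_j}$. Multiplying the $j$-th equation by $a_j/b_j$ and summing yields $\log x = \theta + \mu_n (x-1)$ with $\mu_n = \sum_j a_j/b_j = \|h_n\|_{L^1}$, while the residual constant equals $\nu(x_n - 1)$ with $x_n$ the minimal root. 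For $x_n \geq 1$ one has $k_j \geq \theta/\sum_i a_i$, so $u(z) \geq e^{(\theta/\sum_i a_i) \sum_j z_j}$ on the positive orthant, yielding $\limsup_t t^{-1} \log \mathbb{E}[e^{\theta N_t^{h_n}}] \leq \nu(x_n - 1)$ (a companion test function handles $\theta < 0$). To transfer to general $h$, I invoke Lemma \ref{littleh} to approximate $h$ by $h_n$ with $\|h_n - h\|_{L^1} + \|h_n - h\|_{L^\infty} \to 0$, arranged so that $h_n \geq h$ pointwise and $\mu_n < 1$ for large $n$; a thinning coupling of $N^h$ and $N^{h_n}$ from a common Poisson random measure on $\mathbb{R}_+ \times \mathbb{R}_+$ gives $N_t^h \leq N_t^{h_n}$ almost surely, hence $\mathbb{E}[e^{\theta N_t^h}] \leq \mathbb{E}[e^{\theta N_t^{h_n}}]$ for $\theta \geq 0$. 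Continuity of the minimal root of $x = e^{\theta + \mu(x-1)}$ in $\mu$ at $\mu < 1$ then passes the bound to the limit, and $\theta < 0$ is handled symmetrically via a lower approximation.

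\textbf{Main obstacle.} The delicate step is the coupling/approximation. One must produce a sum-of-exponentials approximation that is simultaneously positive, monotone in the direction dictated by the sign of $\theta$, convergent in both $L^1$ and $L^\infty$, and with $\mu_n < 1$ for large $n$; the construction in Lemma \ref{littleh} gives the norms but monotonicity requires an extra nonnegative buffer exponential that must vanish as $n \to \infty$ without spoiling $\mu_n < 1$. Coupled with continuous selection of the \emph{minimal} positive root of $x = e^{\theta + \mu(x-1)}$ as $\mu_n \to \mu$ — a choice that becomes delicate near the critical $\theta$ where the two positive roots of the implicit equation coalesce — this is the main technical hurdle of the proof.
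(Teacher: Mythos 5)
Your lower bound (tilting with $\hat\lambda = K\lambda$, optimising over $K \in (0,1/\mu)$ via Lemma \ref{ergodicformula}) and your upper-bound computation for sum-of-exponentials $h_n$ (exponential test function $u=e^{\sum k_i z_i}$ calibrated so the Feynman--Kac expression is $z$-free, giving $x_n=e^{\theta+\mu_n(x_n-1)}$ and the bound $\nu(x_n-1)$) coincide with the paper's. The genuine gap is in the passage from $h_n$ to $h$. You propose a monotone thinning coupling, which requires producing a \emph{positive sum of exponentials} $h_n$ with $h_n(t)\geq h(t)$ for every $t\geq 0$, $\|h_n\|_{L^1}<1$, and $h_n\to h$. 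But under the hypotheses of Lemma \ref{littleh} the tail of $h$ may decay arbitrarily slowly (e.g.\ $h(t)=(1+t)^{-2}$ is allowed), and then no finite sum $\sum_i a_ie^{-b_it}$ with $b_i>0$ can dominate $h$ on all of $[0,\infty)$: eventually $h(t)$ exceeds the exponential tail. Your suggested fix --- adding a ``buffer'' exponential that vanishes as $n\to\infty$ --- cannot repair this, since the buffer itself decays exponentially and hence also eventually drops below $h$. So pointwise domination by a sum of exponentials is simply impossible in general, and the monotone coupling does not go through.

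The paper circumvents exactly this obstruction by a different coupling. It dominates $h$ by $h_n+h_\epsilon$ with $h_\epsilon=|h-h_n|$ (which always dominates pointwise but is \emph{not} a sum of exponentials), and then decomposes the $(h_n+h_\epsilon)$-Hawkes process into a cascade $D_1, D_2, D_3,\dots$: $D_1$ is the $h_n$-Hawkes process (Markovian, hence its $\Gamma_n$ is known), and each $D_j$ for $j\ge 2$ is, conditional on $D_1,\dots,D_{j-1}$, a Poisson point process whose integrated intensity is controlled by $\|h_n+h_\epsilon\|_{L^1}\,D_{j-1}(t)$ (linearity of $\lambda$ is used here). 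Iterating the Poisson exponential-moment bound produces the recursion $f_j(\theta)=(e^{f_{j-1}(\theta)}-1)\|h_n+h_\epsilon\|_{L^1}+\theta$, whose fixed point $y_n$ feeds into a bound of the form $\Gamma_n(\theta+(y_n-1)\|h_\epsilon\|_{L^1})$. Sending $n\to\infty$ recovers $\nu(x-1)$. You should replace the monotone coupling step by this cascade decomposition (or a comparable mechanism that only needs $L^1$/$L^\infty$ closeness of $h_n$ to $h$, not pointwise domination by a sum of exponentials); the rest of your proof then stands.
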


\begin{pf}
By Lemma \ref{ergodicformula}, we have
%
\begin{eqnarray}
&& \liminf_{t\rightarrow\infty}\frac{1}{t}\log\mathbb{E}
\bigl[e^{\theta N_{t}}\bigr]\nonumber
\\
&&\qquad \geq\sup_{(\hat{\lambda},\hat{\pi})\in
\mathcal{Q}_{e}}\int\bigl[\theta
\hat{\lambda}+\hat{\lambda} -\lambda-\hat{\lambda}\log(\hat{\lambda
}/\lambda)\bigr]\hat{\pi}\nonumber
\\
&&\qquad \geq \sup_{(K\lambda,\hat{\pi})\in\mathcal{Q}_{e},K\in\mathbb
{R}^{+}}\int\bigl[\theta\hat{\lambda}+\hat{\lambda}-
\lambda-\hat{\lambda}\log(\hat{\lambda}/\lambda) \bigr]\hat{\pi}
\nonumber\\[-8pt]\\[-8pt]
&&\qquad \geq\sup_{0<K<{1}/{\mu},(K\lambda,\hat{\pi})\in\mathcal
{Q}_{e}}\int\biggl[\theta+1-\frac{1}{K}-\log K
\biggr]\hat{\lambda}\hat{\pi}
\nonumber
\\
&&\qquad \geq \sup_{0<K<{1}/{\mu}} \biggl[\theta+1-\frac{1}{K}-\log K
\biggr]\cdot\frac{K\nu}{1-K\mu}
\nonumber
\\
&&\qquad = \cases{ \nu(x-1), &\quad if $\theta\in(-\infty,\mu-1-\log\mu]$,
\cr
+\infty, &\quad
otherwise,}
\nonumber
\end{eqnarray}
where $x$ is the minimal solution to $x=e^{\theta+\mu(x-1)}$.

By Lemma \ref{littleh}, we can find a sequence of $h_{n}$, where
$h_{n}(t)=\sum_{i=1}^{n}a_{i}e^{-b_{i}t}$ such that
$h_{n}\rightarrow h$ as $n\rightarrow\infty$ in both $L^{1}$ and
$L^{\infty}$ norms. Let $h_{\varepsilon}(t)=|h(t)-h_{n}(t)|$.
Then $0\leq h_{n}-h_{\varepsilon}\leq h\leq h_{n}+h_{\varepsilon}$.

Let $D_{1}$ be the set of points generated
by the Hawkes process with intensity $\lambda(\sum_{\tau\in D_{1},\tau
<t}h_{n}(t-\tau))$ and then conditional on $D_{1}$, let $D_{2}$ be the
set of points
generated by the point process with intensity
$\lambda(\sum_{\tau\in D_{1},\tau<t}(h_{n}+h_{\varepsilon})(t-\tau
))-\lambda(\sum_{\tau\in D_{1},\tau<t}h_{n}(t-\tau))$
and then iteratively, conditional on $D_{1},\ldots,D_{j-1}$, let
$D_{j}$ be the set of points generated by the point process with intensity
$\lambda(\sum_{\tau\in\bigcup_{i=1}^{j-1}D_{i},\tau<t}(h_{n}+h_{\varepsilon
})(t-\tau))
-\lambda(\sum_{\tau\in\bigcup_{i=1}^{j-2}D_{i},\tau<t}(h_{n}+h_{\varepsilon
})(t-\tau))$,
for any $j\geq3$. Let $D_{j}(t)$ correspond to the number of points in
$D_{j}$ by time $t$. Therefore,
$\sum_{j=1}^{\infty}D_{j}(t)$ equals the number of points generated by
Hawkes process with intensity $\lambda(\sum_{\tau<t}(h_{n}+h_{\varepsilon
})(t-\tau))$.
Our coupling argument is essentially the same as the one used in Br\'
{e}maud and Massouli\'{e} \cite{Bremaud}.
For a more formal treatment, one can use Poisson canonical space and
Poisson embeddings;
we refer to Br\'{e}maud and Massouli\'{e} \cite{Bremaud} for the details.

Assume that $\theta>0$, and we therefore have
%
\begin{equation}
\mathbb{E}\bigl[e^{\theta N_{t}}\bigr]\leq\mathbb{E} \bigl[e^{\theta\sum
_{j=1}^{\infty}D_{j}(t)}
\bigr].
\end{equation}
Now, for any $N\in\mathbb{N}$,
%
\begin{eqnarray}
&& \mathbb{E} \biggl[\exp\biggl\{\theta\sum_{j=1}^{N}D_{j}(t)\biggr\} \biggr]\nonumber
\\
&&\qquad = \mathbb{E} \Biggl[\exp\Biggl\{\theta\sum_{j=1}^{N-1}D_{j}(t)\Biggr\}\nonumber
\\
&&\hspace*{43pt}{}\times \exp\Biggl\{\bigl(e^{\theta
}-1\bigr)\int_{0}^{t}
\lambda\Biggl(\sum_{\tau\in\bigcup_{i=1}^{N-1}D_{i},\tau<s}(h_{n}+h_{\varepsilon
})(s-\tau) \Biggr)\nonumber
\\
&&\hspace*{105pt}{} -\lambda\Biggl(\sum_{\tau\in\bigcup_{i=1}^{N-2}D_{i},\tau
<s}(h_{n}+h_{\varepsilon})(s-\tau) \Biggr)\,ds\Biggr\} \Biggr]
\nonumber\\[-8pt]\\[-8pt]
&&\qquad \leq\mathbb{E} \Biggl[\exp\Biggl\{\theta\sum_{j=1}^{N-2}D_{j}(t)\Biggr\}\exp\bigl\{\bigl(\bigl(e^{\theta
}-1\bigr)\Vert h_{n}+h_{\varepsilon}\Vert_{L^{1}}+\theta\bigr)D_{N-1}(t)\bigr\} \Biggr]\nonumber
\\
&&\qquad \leq\cdots
\nonumber
\\
&&\qquad \leq\mathbb{E} \bigl[\exp\bigl\{\theta D_{1}(t)+f_{N-1}(\theta)D_{2}(t)\bigr\} \bigr]
\nonumber
\\
&&\qquad \leq\mathbb{E} \bigl[\exp\bigl\{\theta D_{1}(t)+\bigl(\exp\bigl\{f_{N-1}(\theta)\bigr\}-1\bigr)\Vert
h_{\varepsilon}\Vert_{L^{1}}D_{1}(t)\bigr\} \bigr],
\nonumber
\end{eqnarray}
where $f_{j}(\theta)=(e^{f_{j-1}(\theta)}-1)\Vert h_{n}+h_{\varepsilon
}\Vert_{L^{1}}+\theta$, for $j\geq2$ and $f_{1}(\theta)=\theta$.
Thus, for any $\theta\leq\Vert h_{n}+h_{\varepsilon}\Vert_{L^{1}}-1-\log
(\Vert h_{n}+h_{\varepsilon}\Vert_{L^{1}})$,
$e^{f_{N-1}(\theta)}$ converges to $y_{n}$ as $N\rightarrow\infty$,
where $y_{n}$ is the minimal solution to $y_{n}=e^{\theta+\Vert
h_{n}+h_{\varepsilon}\Vert_{L^{1}}(y_{n}-1)}$.
Since $D_{1}(t)$ is the Hawkes process with exciting function $h_{n}$,
%
\begin{equation}
\limsup_{t\rightarrow\infty}\frac{1}{t}\log\mathbb{E}
\bigl[e^{\theta N_{t}}\bigr]\leq\Gamma_{n}(p_{n}\theta),
\end{equation}
where $p_{n}=1+y_{n}\Vert h-h_{n}\Vert_{L^{1}}$. For $\Gamma
_{n}(p_{n}\theta)$, we have
\begin{eqnarray*}
\Gamma_{n}(p_{n}\theta)&=&\inf_{u\in\mathcal{U}_{p_{n}\theta}}\sup
_{(z_{1},\ldots,z_{n})\in\mathcal{Z}} \Biggl\{\frac{\mathcal
{A}u}{u}+\frac{p_{n}\theta}{\sum_{i=1}^{n}a_{i}}\sum
_{i=1}^{n}b_{i}z_{i}
\Biggr\}
\\
&\leq&\inf_{u=e^{\sum_{i=1}^{n}c_{i}z_{i}}\in\mathcal{U}_{p_{n}\theta
}}\sup_{(z_{1},\ldots,z_{n})\in\mathcal{Z}} \Biggl\{
\frac{\mathcal{A}u}{u}+\frac{p_{n}\theta}{\sum_{i=1}^{n}a_{i}}\sum
_{i=1}^{n}b_{i}z_{i}
\Biggr\}
\\
&=&\inf_{c_{1},\ldots,c_{n}}\sup_{(z_{1},\ldots,z_{n})\in\mathcal{Z}}
\Biggl\{-\sum
_{i=1}^{n}b_{i}c_{i}z_{i}+(
\nu+z_{1}+\cdots+z_{n}) \bigl(e^{\sum_{i=1}^{n}c_{i}a_{i}}-1 \bigr)
\\
&&\hspace*{212pt}{}+\frac{p_{n}\theta}{\sum_{i=1}^{n}a_{i}}\sum_{i=1}^{n}b_{i}z_{i}
\Biggr\}
\\
&=& \nu\bigl(e^{\sum_{i=1}^{n}c^{\ast}_{i}a_{i}}-1 \bigr)=\nu(x_{n}-1),
\end{eqnarray*}
where $c^{\ast}_{i}$ satisfies $-b_{i}c^{\ast}_{i}+e^{\sum
_{i=1}^{n}c^{\ast}_{i}a_{i}}-1+\frac{p_{n}\theta}{\sum_{i=1}^{n}a_{i}}b_{i}=0$,
for each $1\leq i\leq n$. By some computation, it is not hard to see
that $x_{n}=e^{\sum_{i=1}^{n}c^{\ast}_{i}a_{i}}$ satisfies
%
\begin{eqnarray}
x_{n}&=&\exp\Biggl\{p_{n}\theta+\sum
_{i=1}^{n}\frac{a_{i}}{b_{i}}(x_{n}-1)
\Biggr\}
\nonumber\\[-8pt]\\[-8pt]
&=&\exp\biggl\{\bigl(1+y_{n}\Vert h-h_{n}\Vert_{L^{1}}\bigr)
\theta+(x_{n}-1)\int_{0}^{\infty}h_{n}(t)\,dt
\biggr\}.
\nonumber
\end{eqnarray}
Since $h_{n}\rightarrow h$ in $L^{1}$ norm, it is not hard to see that
$x_{n}$ converges to the
minimal solution of $x=e^{\theta+\Vert h\Vert_{L^{1}}(x-1)}$ as
$n\rightarrow\infty$. If $\theta<0$, consider $h\geq h_{n}-h_{\varepsilon
}\geq0$ and
the argument is similar.
\end{pf}

\begin{theorem}
Assume $\lambda(z)=\nu+z$, $h\dvtx [0,\infty)\rightarrow\mathbb{R}^{+}$, $\mu:=\int_{0}^{\infty}h(t)\,dt<1$ and $h$ is continuous.
Then $(N_{t}/t\in\cdot)$ satisfies a large deviation principle with the
rate function $I(x)$ given by
%
\begin{equation}
I(x)= \cases{ \displaystyle x\log\biggl(\frac{x}{\nu+x\mu} \biggr)-x+\mu x+\nu,
&\quad if $x
\in[0,\infty)$,
\vspace*{5pt}\cr
+\infty, &\quad otherwise.}
\end{equation}
\end{theorem}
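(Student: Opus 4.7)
The plan is to derive the LDP directly from Theorem \ref{linearcase} via the G\"{a}rtner--Ellis theorem, and then identify the rate function as the Legendre transform of $\Gamma(\theta)=\nu(x(\theta)-1)$, where $x(\theta)$ is the minimal solution of $x = e^{\theta+\mu(x-1)}$.

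First I would verify the hypotheses of G\"{a}rtner--Ellis. Theorem \ref{linearcase} gives $\Gamma(\theta)<\infty$ for $\theta\leq\mu-1-\log\mu$ and $\Gamma(\theta)=+\infty$ otherwise; since $\mu<1$ we have $\mu-1-\log\mu>0$, so $0$ lies in the interior of $\mathrm{dom}(\Gamma)$. From $\log x(\theta)=\theta+\mu(x(\theta)-1)$, implicit differentiation yields
\begin{equation}
x'(\theta)=\frac{x(\theta)}{1-\mu x(\theta)},\qquad \Gamma'(\theta)=\frac{\nu x(\theta)}{1-\mu x(\theta)}.
\end{equation}
As $\theta$ increases from $-\infty$ to $\mu-1-\log\mu$, $x(\theta)$ increases from $0$ to $1/\mu$, and hence $\Gamma'(\theta)$ increases from $0$ to $+\infty$. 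In particular $\Gamma$ is differentiable on the interior of its effective domain, lower semicontinuous, and steep at the right endpoint; this is precisely essential smoothness. G\"{a}rtner--Ellis therefore delivers the full LDP for $(N_t/t)$ with good rate function $I(x)=\sup_{\theta\in\mathbb{R}}\{\theta x-\Gamma(\theta)\}$.

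Second, I would compute this Legendre transform explicitly. For $x<0$, $I(x)=+\infty$ since $N_t\geq 0$. For $x\geq 0$, setting the derivative of $\theta x - \nu(x(\theta)-1)$ to zero yields the first-order condition $x=\Gamma'(\theta)=\nu x(\theta)/(1-\mu x(\theta))$, which rearranges to
\begin{equation}
x(\theta)=\frac{x}{\nu+\mu x},
\end{equation}
a value in $[0,1/\mu)$ as $x$ ranges over $[0,\infty)$, so a maximizer $\theta$ indeed exists in the interior of the domain. Substituting into $\theta=\log x(\theta)-\mu(x(\theta)-1)$ gives
\begin{equation}
\theta=\log\!\left(\frac{x}{\nu+\mu x}\right)+\mu-\frac{\mu x}{\nu+\mu x}.
\end{equation}
Plugging both back into $I(x)=\theta x-\nu(x(\theta)-1)$ and collecting the rational terms $\mu x - \mu x^{2}/(\nu+\mu x) - \nu x/(\nu+\mu x) + \nu = \mu x + \nu - x$ gives
\begin{equation}
I(x)=x\log\!\left(\frac{x}{\nu+\mu x}\right)-x+\mu x+\nu,
\end{equation}
matching the claimed formula. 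The boundary value $x=0$ is recovered by letting $\theta\to-\infty$: $x(\theta)\to 0$ and $I(0)=\nu$, consistent with the formula.

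The most delicate point is the steepness check at $\theta=\mu-1-\log\mu$, which is needed to upgrade the abstract G\"{a}rtner--Ellis upper bound to an LDP lower bound via the Legendre duality; this reduces to noting that the slope $\Gamma'(\theta)=\nu x(\theta)/(1-\mu x(\theta))$ blows up as $x(\theta)\uparrow 1/\mu$, which follows immediately from the implicit-function computation above. Everything else is algebraic manipulation, plus the observation that $I$ is a good rate function because the $x\log x$ term forces superlinear growth at infinity, making the sublevel sets compact.
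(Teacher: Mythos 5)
Your proof is correct, but it takes a genuinely different route for the lower bound. The paper's proof applies the G\"{a}rtner--Ellis theorem only for the upper bound, and then proves the lower bound separately via the tilting argument (as in Theorem \ref{mainlimit}), finally identifying $I$ with the Fenchel--Legendre transform of $\Gamma$. You instead verify directly from the explicit formula $\Gamma(\theta)=\nu(x(\theta)-1)$, $x=e^{\theta+\mu(x-1)}$, that $\Gamma$ is an essentially smooth, lower semicontinuous function with $0\in\mathrm{int}(\mathcal{D}_\Gamma)$, and invoke the full G\"{a}rtner--Ellis theorem at once. The steepness check, $\Gamma'(\theta)=\nu x(\theta)/(1-\mu x(\theta))\uparrow\infty$ as $\theta\uparrow\mu-1-\log\mu$ (where $x(\theta)\uparrow 1/\mu$), is exactly the point that the tilting method lets one sidestep; conversely, your route is available here precisely because $\Gamma$ is explicit, which is not the case for the nonlinear rate functions treated elsewhere in the paper (hence the paper keeps the tilting method uniformly throughout). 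Both arguments are valid; your computation of the Legendre transform is correct and matches the paper's remark identifying the rate function with the Bordenave--Torrisi form. One small remark: the theorem as stated writes $\mu<\infty$, but (as you implicitly use) the result relies on $\mu<1$, since otherwise $\mu-1-\log\mu$ is not positive and $0$ would lie on the boundary of $\mathcal{D}_\Gamma$; this is inherited from the hypothesis $\int_0^\infty h<1$ of Theorem \ref{linearcase} and appears to be a typographical slip in the statement.
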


\begin{pf}
For the upper bound, apply the G\"{a}rtner--Ellis theorem. For the
lower bound, use the tilting method and identify $I(x)$ as
the Fenchel--Legendre transform of $\Gamma(\theta)$.
\end{pf}

\begin{remark}
In Bordenave and Torrisi \cite{Bordenave}, their $I(x)$ has the form
%
\begin{equation}
I(x)= \cases{\displaystyle x\theta_{x}+\nu-\frac{\nu x}{\nu+\mu x}, &\quad if $x\in[0,
\infty)$,
\vspace*{5pt}\cr
+\infty, &\quad otherwise,}
\end{equation}
where $\theta=\theta_{x}$ is the unique solution in $(-\infty,\mu-1-\log
\mu]$ of $\mathbb{E}[e^{\theta S}]=\frac{x}{\nu+x\mu}$, $x>0$.
Here, $\mathbb{E}[e^{\theta S}]$ satisfies the equation
%
\begin{equation}
\mathbb{E}\bigl[e^{\theta S}\bigr]=e^{\theta}\exp\bigl\{\mu\bigl(
\mathbb{E}\bigl[e^{\theta S}\bigr]-1\bigr) \bigr\},
\end{equation}
which implies that $\theta_{x}=\log(\frac{x}{\nu+x\mu} )-\mu(\frac
{x}{\nu+x\mu}-1 )$.
Substituting into the formula, their rate function is the same as what
we got.
\end{remark}

\begin{remark}
In Bordenave and Torrisi \cite{Bordenave}, the assumption in proving
the large deviations for linear Hawkes processes is slightly different
from ours. They did not require $h(\cdot)$ to be continuous, but they
further assumed that \mbox{$\int_{0}^{\infty}th(t)\,dt<\infty$.}
\end{remark}

\section{Large deviations for a special class of nonlinear Hawkes processes: An approximation approach}\label{sec6}

In this section, we prove the large deviation results for $(N_{t}/t\in
\cdot)$ for a very special class of
nonlinear $\lambda(\cdot)$ and $h(\cdot)$ that satisfies the
assumptions in Lemma \ref{littleh}.

Let $P_{n}$ denote the probability measure under which $N_{t}$ follows
the Hawkes process with
exciting function $h_{n}=\sum_{i=1}^{n}a_{i}e^{-b_{i}t}$ such that
$h_{n}\rightarrow h$ as $n\rightarrow\infty$
in both $L^{1}$ and $L^{\infty}$ norms. Let us define
%
\begin{equation}
\Gamma_{n}(\theta)=\lim_{t\rightarrow\infty}\frac{1}{t}\log
\mathbb{E}^{P_{n}} \bigl[e^{\theta N_{t}} \bigr].
\end{equation}

We have the following results.

\begin{lemma}\label{Lipschitz}
For any $K>0$ and $\theta_{1},\theta_{2}\in[-K,K]$, there exists some
constant $C(K)$ such that for any $n$,
%
\begin{equation}
\bigl|\Gamma_{n}(\theta_{1})-\Gamma_{n}(
\theta_{2})\bigr|\leq C(K)|\theta_{1}-\theta_{2}|.
\end{equation}
\end{lemma}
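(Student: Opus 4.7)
The plan is to exploit convexity of $\Gamma_n$ in $\theta$ together with a uniform (in $n$) bound on a slightly enlarged compact interval. A convex function bounded by $M$ on $[-K-1,K+1]$ is automatically $(2M)$-Lipschitz on $[-K,K]$, so it suffices to exhibit $M(K)$, independent of $n$, with $|\Gamma_n(\theta)|\leq M(K)$ for all $\theta \in [-K-1,K+1]$. Convexity of $\theta\mapsto\frac{1}{t}\log\mathbb{E}^{P_n}[e^{\theta N_t}]$ is immediate from H\"older's inequality and is preserved in the $t\to\infty$ limit.

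For the upper bound, I would use the hypothesis $\lim_{z\to\infty}\lambda(z)/z = 0$ to pick $\epsilon>0$ and $\nu_\epsilon>0$ with $\lambda(z)\leq \nu_\epsilon+\epsilon z$ for all $z\geq 0$. Since $h_n\to h$ in $L^1$, $\mu_n:=\|h_n\|_{L^1}$ is bounded, so I can shrink $\epsilon$ until $\sup_n \epsilon\mu_n<1$. A standard Poisson thinning coupling (the same device invoked inside the proof of Theorem \ref{linearcase}, following Br\'emaud--Massouli\'e \cite{Bremaud}) then yields, pathwise under $P_n$, $N_t \leq \tilde N_t^{(n)}$, where $\tilde N^{(n)}$ is the linear Hawkes process with rate $\tilde\lambda(z)=\nu_\epsilon + z$ and exciting function $\epsilon h_n$. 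Applying Theorem \ref{linearcase} to $\tilde N^{(n)}$, for $\theta\geq 0$ one obtains
\[
\Gamma_n(\theta) \leq \nu_\epsilon\bigl(x_n(\theta)-1\bigr),
\]
where $x_n(\theta)$ is the minimal positive root of $x = \exp(\theta+\epsilon\mu_n(x-1))$. Because $\epsilon\mu_n$ stays bounded away from $1$ uniformly in $n$, $x_n(\theta)$ is bounded uniformly in $n$ and in $\theta\in[0,K+1]$. For $\theta<0$ the trivial bound $\Gamma_n(\theta)\leq 0$ suffices.

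For the lower bound, $\Gamma_n(\theta)\geq 0$ when $\theta\geq 0$. When $\theta<0$, Jensen's inequality gives $\frac{1}{t}\log\mathbb{E}^{P_n}[e^{\theta N_t}] \geq \theta\,\mathbb{E}^{P_n}[N_t]/t$, and the same domination delivers $\mathbb{E}^{P_n}[N_t]/t \leq \mathbb{E}[\tilde N_t^{(n)}]/t \to \nu_\epsilon/(1-\epsilon\mu_n)$, bounded uniformly in $n$. Putting these ranges together, $|\Gamma_n(\theta)|\leq M(K)$ on $[-K-1,K+1]$ with $M(K)$ independent of $n$, and convexity then delivers the stated Lipschitz estimate.

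The main obstacle is ensuring that every quantitative ingredient of the domination can be chosen independently of $n$; this works precisely because $\|h_n\|_{L^1}\to\|h\|_{L^1}<\infty$, so a single $\epsilon$ renders every dominating linear Hawkes subcritical, and Theorem \ref{linearcase} gives explicit limit values depending on $n$ only through the bounded quantity $\mu_n$. A cleaner but entirely equivalent alternative would be to read off from the variational formula that $\partial_\theta\Gamma_n(\theta) = \int \hat\lambda\,\hat\pi$ at the optimizer and bound this mean rate uniformly via the same linear domination.
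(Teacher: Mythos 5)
Your proof is correct, and it takes a genuinely different and in fact more elementary route than the paper's. The paper works directly with the variational formula for $\Gamma_n$: it restricts the supremum defining $\Gamma_n(\theta_2)$ to the set $\mathcal{Q}_e^\ast$ of pairs $(\hat\lambda,\hat\pi)$ that are near-optimal for $\theta_1$, reducing the Lipschitz estimate to a uniform bound on the mean rate $\int\hat\lambda\,\hat\pi$ over $\mathcal{Q}_e^\ast$. To obtain that bound the paper builds a specific Lyapunov-type test function $u=e^{\sum c_i z_i}$, shows $\int V\hat\pi\leq\hat H(\hat\pi)$ via Feynman--Kac and Dynkin, and finally needs a uniform lower bound on $\Gamma_n(-K)$, which it gets by dominating $\lambda$ with an affine rate and invoking Theorem \ref{linearcase} for a dominating exciting function $g\geq\sup_n h_n$. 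You sidestep all of the variational and Lyapunov machinery: convexity of $\Gamma_n$ (which is automatic from H\"older and is preserved under $t\to\infty$) plus a uniform two-sided bound on $[-K-1,K+1]$ already yields the uniform Lipschitz constant on $[-K,K]$. Both arguments are driven by the same structural input --- the linear domination $\lambda(z)\leq\nu_\epsilon+\epsilon z$ with $\epsilon\sup_n\|h_n\|_{L^1}<1$, which makes every dominating linear Hawkes subcritical uniformly in $n$ --- but your route uses it only to bound $\Gamma_n$, not to control an optimizing pair. The trade-off is that the paper's argument, by bounding $\int\hat\lambda\,\hat\pi$ directly, gives a more transparent interpretation of the Lipschitz constant as a bound on the tilted mean arrival rate (which you yourself note as the ``cleaner but equivalent alternative'' at the end), whereas your argument is shorter and relies only on coarse information.

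One small presentational caveat: the domination $N_t\leq\tilde N_t^{(n)}$ is not literally a $P_n$-a.s.\ statement but a coupling on an enlarged space; and the comparison requires $h_n\geq 0$, which the construction in Lemma \ref{littleh} does guarantee (via approximating $\sqrt{h}$ and squaring). Neither affects correctness, but both are worth saying explicitly if this were written out in full.
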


\begin{pf}
Without loss of generality, take $\theta_{2}>\theta_{1}$. Then
%
\begin{eqnarray}
\Gamma_{n}(\theta_{1})&\leq&\Gamma_{n}(
\theta_{2})\nonumber
\\
&=&\sup_{(\hat{\lambda},\hat{\pi})\in\mathcal{Q}^{\ast}_{e}}\int(\theta_{2}-
\theta_{1})\hat{\lambda}\hat{\pi} +\theta_{1}\hat{\lambda}
\hat{\pi}-\widehat{H}(\hat{\lambda},\hat{\pi})
\\
&\leq&\sup_{(\hat{\lambda},\hat{\pi})\in\mathcal{Q}^{\ast}_{e}}\int
(\theta_{2}-
\theta_{1})\hat{\lambda}\hat{\pi}+\Gamma_{n}(
\theta_{1}),
\nonumber
\end{eqnarray}
where
%
\begin{equation}
\mathcal{Q}_{e}^{\ast}= \biggl\{(\hat{\lambda},\hat{\pi})\in
\mathcal{Q}_{e}\dvtx \int\theta_{1}\hat{\lambda}\hat{\pi} -
\widehat{H}(\hat{\lambda},\hat{\pi})\geq\Gamma_{n}(\theta_{1})-1
\biggr\}.
\end{equation}
The key is to prove that $\sup_{(\hat{\lambda},\hat{\pi})\in\mathcal
{Q}_{e}^{\ast}}\int\hat{\lambda}\hat{\pi}\leq C(K)$
for some constant $C(K)>0$ depending only on $K$. Define
$u=u(z_{1},\ldots,z_{n})=e^{\sum_{i=1}^{n}c_{i}z_{i}}$ where
%
\begin{equation}
c_{i}=\frac{3K}{\sum_{i=1}^{n}({a_{i}}/{b_{i}})}\cdot\frac
{1}{b_{i}},\qquad1\leq i\leq
n.
\end{equation}
Define $V=-\frac{\mathcal{A}u}{u}$ such that
%
\begin{equation}
V(z_{1},\ldots,z_{n})=\frac{3K}{\sum_{i=1}^{n}({a_{i}}/{b_{i}})}\sum
_{i=1}^{n}z_{i}-\lambda(z_{1}+
\cdots+z_{n}) \bigl(e^{3K}-1\bigr).
\end{equation}
Notice that $\int\widehat{\mathcal{A}}f\hat{\pi}=0$ for any test function
$f$ with certain regularities.
If we try $f=\frac{z_{i}}{b_{i}}$, $1\leq i\leq n$, we get
%
\begin{equation}
-\int z_{i}\hat{\pi}+\frac{a_{i}}{b_{i}}\int\hat{\lambda}\hat{\pi}=0,
\qquad1\leq i\leq n.
\end{equation}
Summing over $1\leq i\leq n$, we get
%
\begin{equation}
\int\hat{\lambda}\hat{\pi}=\frac{1}{\sum_{i=1}^{n}
({a_{i}}/{b_{i}})}\int\sum
_{i=1}^{n}z_{i}\hat{\pi}.
\end{equation}
Notice that $\sum_{i=1}^{n}\frac{a_{i}}{b_{i}}=\Vert h_{n}\Vert
_{L^{1}}$ which is approximately $\Vert h\Vert_{L^{1}}$
when $n$ is large. Since $\limsup_{z\rightarrow\infty}\frac{\lambda
(z)}{z}=0$ and $\sum_{i=1}^{n}z_{i}\geq0$, we have
%
\begin{eqnarray}
\theta_{1}\int\hat{\lambda}\hat{\pi}&\leq& K\int\hat{\lambda}\hat{\pi}\nonumber
\\
&=& \frac{K}{\sum_{i=1}^{n}({a_{i}}/{b_{i}})} \int\sum_{i=1}^{n}z_{i}
\hat{\pi}
\\
&\leq& \frac{1}{2}\int V\hat{\pi}+C_{1/2}(K),\nonumber
\end{eqnarray}
where $C_{1/2}(K)$ is some positive constant depending only on $K$.

We claim that $\int V(z)\hat{\pi}\leq\widehat{H}(\hat{\pi})$ for any $\hat
{\pi}\in\mathcal{Q}_{e}^{\ast}$.
Let us prove it. By the ergodic theorem and Jensen's inequality,
%
\begin{eqnarray}
\int V(z)\hat{\pi} &=&\lim_{t\rightarrow\infty}\mathbb{E}^{\hat{\pi}}
\biggl[\frac{1}{t}\int_{0}^{t}V(Z_{s})\,ds
\biggr]
\nonumber\\[-8pt]\\[-8pt]
&\leq&\limsup_{t\rightarrow\infty}\frac{1}{t}\log
\mathbb{E}^{\pi} \bigl[e^{\int_{0}^{t}V(Z_{s})\,ds} \bigr]+\widehat{H}(\hat
{\pi}).\nonumber
\end{eqnarray}
Next, we will show that $u\geq1$. That is equivalent to proving $\sum
_{i=1}^{n}\frac{z_{i}}{b_{i}}\geq0$. Consider the process
%
\begin{equation}
Y_{t}=\sum_{i=1}^{n}
\frac{Z_{i}(t)}{b_{i}}=\sum_{\tau_{j}<t}\sum
_{i=1}^{n}\frac{a_{i}}{b_{i}}e^{-b_{i}(t-\tau_{j})}=\sum
_{\tau_{j}<t}g(t-\tau_{j}),
\end{equation}
where $g(t)=\sum_{i=1}^{n}\frac{a_{i}}{b_{i}}e^{-b_{i}t}$. Notice that
$g(t)=\int_{t}^{\infty}h(s)\,ds>0$.
Therefore, $Y_{t}\geq0$ almost surely and $\sum_{i=1}^{n}\frac
{Z_{i}(t)}{b_{i}}\geq0$.
Since\vspace*{1pt} $\frac{\mathcal{A}u}{u}+V=0$ and $u\geq1$, by the Feynman--Kac
formula and Dynkin's formula,
%
\begin{eqnarray}
\mathbb{E}^{\pi} \bigl[e^{\int_{0}^{t}V(Z_{s})\,ds} \bigr]&\leq&
\mathbb{E}^{\pi} \bigl[u(Z_{t})e^{\int_{0}^{t}V(Z_{s})\,ds} \bigr]\nonumber
\\
&=&u(Z_{0})+\int_{0}^{t}
\mathbb{E}^{\pi} \bigl[\bigl(\mathcal{A}u(Z_{s})+V(Z_{s})u(Z_{s})
\bigr)e^{\int_{0}^{s}V(Z_{u})\,du} \bigr]\,ds
\\
&=&u(Z_{0}),
\nonumber
\end{eqnarray}
and therefore $\int V(z)\hat{\pi}\leq\widehat{H}(\hat{\pi})$ for any $\hat
{\pi}\in\mathcal{Q}_{e}^{\ast}$. Hence
%
\begin{equation}
\theta_{1}\int\hat{\lambda}\hat{\pi}\leq\frac{1}{2}\int
V(z)+C_{1/2}(K)\leq\frac{1}{2}\widehat{H}+C_{1/2}(K).
\end{equation}
Notice that
%
\begin{equation}
-\infty<\Gamma_{n}(\theta_{1})-1\leq\theta_{1}
\int\hat{\lambda}\hat{\pi}-\widehat{H}\leq\Gamma_{n}(\theta_{1})<
\infty.
\end{equation}
Hence
%
\begin{equation}
\Gamma_{n}(\theta_{1})-1+\frac{1}{2}\widehat{H}\leq
\theta_{1}\int\hat{\lambda}\hat{\pi}-\frac{1}{2}\widehat{H}\leq
C_{1/2}(K),
\end{equation}
which implies $\widehat{H}\leq2(C_{1/2}(K)-\Gamma_{n}(\theta_{1})+1)$ and
so also,
%
\begin{eqnarray}
\int\hat{\lambda}\hat{\pi}&\leq&\frac{1}{2K}\int V\hat{\pi}+
\frac{1}{K}C_{1/2}(K)
\nonumber\\[-8pt]\\[-8pt]
&\leq&\frac{1}{K}\bigl(C_{1/2}(K)-
\Gamma_{n}(\theta_{1})+1\bigr)+\frac{1}{K}C_{1/2}(K).\nonumber
\end{eqnarray}
Finally, notice that since $h_{n}\rightarrow h$ in both $L^{1}$ and
$L^{\infty}$ norms, we can find a function $g$
such that $\sup_{n}h_{n}\leq g$ and $\Vert g\Vert_{L^{1}}<\infty$
and thus
%
\begin{equation}
\Gamma_{n}(\theta_{1})\geq\Gamma_{n}(-K)\geq
\Gamma_{g}(-K),
\end{equation}
where $\Gamma_{g}$ denotes the case when the rate function is still
$\lambda(\cdot)$ but the exciting function is $g(\cdot)$ instead of
$h_{n}(\cdot)$.
Notice that here $\Vert g\Vert_{L^{1}}<\infty$ but may not be less than
$1$. It is still well defined because of
the assumption $\lim_{z\rightarrow\infty}\frac{\lambda(z)}{z}=0$.
Indeed, we can find
$\lambda(z)=\nu_{\varepsilon}+\varepsilon z$ that dominates the original
$\lambda(\cdot)$ for $\nu_{\varepsilon}>0$ big enough
and $\varepsilon>0$ small enough so that $\varepsilon\Vert g\Vert_{L^{1}}<1$.
Now, we have $\Gamma_{g}(-K)\geq\Gamma^{\nu_{\varepsilon}}_{\varepsilon
g}(-K)$ which is finite (see Theorem \ref{linearcase}),
where $\Gamma^{\nu_{\varepsilon}}_{\varepsilon g}(-K)$ corresponds to the
case when $\lambda(z)=\nu_{\varepsilon}+\varepsilon z$. Hence
%
\begin{equation}
\sup_{(\hat{\lambda},\hat{\pi})\in\mathcal{Q}_{e}^{\ast}}\int\hat
{\lambda}\hat{\pi}\leq C(K),
\end{equation}
for some $C(K)>0$ depending only on $K$.
\end{pf}

\begin{lemma}\label{Cauchy}
Assume that $\lambda(\cdot)\geq c$ for some $c>0$, $\lim_{z\rightarrow
\infty}\frac{\lambda(z)}{z}=0$
and $\lambda(\cdot)^{\alpha}$ is Lipschitz with constant $L_{\alpha}$
for any $\alpha\geq1$.
Then for any $K>0$, $\Gamma_{n}(\theta)$ is Cauchy with $\theta$
uniformly in $[-K,K]$.
\end{lemma}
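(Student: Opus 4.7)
The natural strategy is a change-of-measure comparison between the Hawkes laws $P_n$ and $P_m$, followed by a Hölder argument that lets us trade a small distortion of $\theta$ against a $q$-th moment of a Radon--Nikodym derivative that we can control because $h_n\to h$ in both $L^{1}$ and $L^{\infty}$.

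First, by Girsanov for point processes,
\begin{equation*}
\frac{dP_{m}}{dP_{n}}\bigg|_{\mathcal{F}_{t}}
=\exp\!\left\{\int_{0}^{t}\bigl(\lambda(Z_{s}^{(n)})-\lambda(Z_{s}^{(m)})\bigr)ds
+\int_{0}^{t}\log\!\frac{\lambda(Z_{s}^{(m)})}{\lambda(Z_{s}^{(n)})}\,dN_{s}\right\},
\end{equation*}
where $Z_{s}^{(n)}=\int_{0}^{s}h_{n}(s-u)dN_{u}$ and similarly for $m$. Pick conjugate exponents $p=1+\delta$, $q=(1+\delta)/\delta$, and apply Hölder:
\begin{equation*}
\mathbb{E}^{P_{m}}[e^{\theta N_{t}}]
\leq \mathbb{E}^{P_{n}}[e^{p\theta N_{t}}]^{1/p}\,
\mathbb{E}^{P_{n}}\!\left[\left(\tfrac{dP_{m}}{dP_{n}}\right)^{q}\right]^{1/q}.
\end{equation*}
Taking $\limsup_{t}\tfrac{1}{t}\log$ gives
$\Gamma_{m}(\theta)\leq \tfrac{1}{p}\Gamma_{n}(p\theta)+\tfrac{1}{q}R(q,n,m)$,
where $R(q,n,m):=\limsup_{t}\tfrac{1}{t}\log\mathbb{E}^{P_{n}}[(dP_{m}/dP_{n})^{q}]$.

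The central technical step is to show $R(q,n,m)\to 0$ as $n,m\to\infty$ uniformly for $q$ in a small right-neighborhood of $1$. I would rewrite $(dP_{m}/dP_{n})^{q}$ by changing measure once more to the intensity $\hat\lambda_{s}:=\lambda(Z_{s}^{(n)})^{1-q}\lambda(Z_{s}^{(m)})^{q}$, producing
\begin{equation*}
\mathbb{E}^{P_{n}}\!\left[\left(\tfrac{dP_{m}}{dP_{n}}\right)^{q}\right]
=\mathbb{E}^{\tilde P}\!\left[\exp\!\left\{\int_{0}^{t}\lambda(Z_{s}^{(n)})\,\phi_{q}\!\bigl(\lambda(Z_{s}^{(m)})/\lambda(Z_{s}^{(n)})\bigr)\,ds\right\}\right],
\end{equation*}
with $\phi_{q}(r):=r^{q}-1-q(r-1)\geq 0$ and $\phi_{1}\equiv 0$. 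Writing $\lambda_{n}=\lambda(Z^{(n)})$, $\lambda_{m}=\lambda(Z^{(m)})$, the integrand equals $\lambda_{n}^{1-q}\lambda_{m}^{q}-\lambda_{n}-q(\lambda_{m}-\lambda_{n})$. Using the Lipschitz hypothesis on $\lambda(\cdot)^{q}$ with constant $L_{q}$, together with the lower bound $\lambda\geq c$, gives $|\lambda_{n}^{1-q}\lambda_{m}^{q}-\lambda_{n}|\leq c^{1-q}L_{q}|Z^{(m)}-Z^{(n)}|$ and $|\lambda_{m}-\lambda_{n}|\leq L_{1}|Z^{(m)}-Z^{(n)}|$. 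Since both bounds vanish at $q=1$ (taking the difference rather than absolute values gives the exact cancellation $\phi_{1}=0$), a Taylor expansion in $q$ around $1$ yields
\begin{equation*}
|\lambda_{n}\phi_{q}(\lambda_{m}/\lambda_{n})|\leq (q-1)\,C(c,L_{q})\,|Z_{s}^{(m)}-Z_{s}^{(n)}|.
\end{equation*}
Because $|Z_{s}^{(m)}-Z_{s}^{(n)}|\leq \|h_{m}-h_{n}\|_{L^{\infty}}N_{s}$ and $\int_{0}^{t}|Z_{s}^{(m)}-Z_{s}^{(n)}|ds\leq\|h_{m}-h_{n}\|_{L^{1}}N_{t}$, a domination by a linear Hawkes process (as in the proof of Lemma \ref{Lipschitz}, via $\lambda(z)\leq\nu_{\epsilon}+\epsilon z$) controls $\mathbb{E}^{\tilde P}[e^{\kappa N_{t}}]$ uniformly for small $\kappa$. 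This yields $R(q,n,m)\leq (q-1)\omega(n,m)$ with $\omega(n,m)\to 0$ as $n,m\to\infty$.

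Finally, combining with Lemma \ref{Lipschitz}, which supplies a constant $C(K+1)$ such that $|\Gamma_{n}(p\theta)-\Gamma_{n}(\theta)|\leq C(K+1)(p-1)K$ uniformly in $n$, we obtain
\begin{equation*}
\Gamma_{m}(\theta)-\Gamma_{n}(\theta)\leq \tfrac{1}{p}C(K+1)(p-1)K+\tfrac{p-1}{p}\,|\Gamma_{n}(\theta)|+(1-\tfrac{1}{p})\omega(n,m)+\tfrac{\delta}{1+\delta}\omega(n,m),
\end{equation*}
which we first send $n,m\to\infty$ and then $\delta\to 0$. Reversing the roles of $m$ and $n$ gives the matching lower bound, so $\sup_{\theta\in[-K,K]}|\Gamma_{m}(\theta)-\Gamma_{n}(\theta)|\to 0$. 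The main obstacle will be step two above: justifying the quantitative bound on the $q$-th moment of the Radon--Nikodym derivative --- in particular ensuring that the martingale part introduced by changing to $\tilde P$ does not spoil the $(q-1)$-smallness, which is where the full strength of ``$\lambda^{\alpha}$ Lipschitz for every $\alpha\geq 1$'' is used (to keep $L_{q}$ uniformly bounded as $q\to 1^{+}$).
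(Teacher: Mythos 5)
Your overall template --- Girsanov change of measure, H\"older to peel off a small distortion of $\theta$ and a moment of the Radon--Nikodym derivative, then the uniform Lipschitz bound of Lemma \ref{Lipschitz} --- is the right one and is indeed what the paper does. But you split the H\"older factors differently, and that difference creates a genuine gap. The paper keeps the compensator $\int_0^t(\lambda(H_m)-\lambda(H_n))\,ds$ inside the \emph{first} factor, so the second factor is just $\mathbb{E}^{P_n}[e^{q\int\log(\lambda(H_m)/\lambda(H_n))dN}]^{1/q}$, which is then handled by Cauchy--Schwarz against the exponential $P_n$-martingale with exponent $2q$; the Lipschitz property of $\lambda^{2q}$ and $\lambda\geq c$ reduce everything to $\Gamma_n\bigl(L_{2q}\epsilon_{m,n}/c^{2q-1}\bigr)$, to which Lemma \ref{Lipschitz} applies directly. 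You instead isolate the entire $(dP_m/dP_n)^q$ in the second H\"older factor, then perform a further change of measure to $\tilde P$ with intensity $\lambda_m^{q}\lambda_n^{1-q}$, landing you with $\mathbb{E}^{\tilde P}[e^{\int\lambda_n\phi_q(\lambda_m/\lambda_n)\,ds}]$. This is the nonstandard object you now have to control.

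Two issues there. First, your claimed bound $\lvert\lambda_n\phi_q(\lambda_m/\lambda_n)\rvert\leq(q-1)\,C(c,L_q)\lvert Z^{(m)}-Z^{(n)}\rvert$ is not justified: the two separate Lipschitz estimates you cite give, when differenced, only $(L_q/c^{q-1}+qL_1)\lvert Z^{(m)}-Z^{(n)}\rvert$, which is \emph{not} small as $q\to 1$, and more to the point you are ultimately sending $q\to\infty$ (since $p\downarrow 1$), so a Taylor expansion near $q=1$ is the wrong regime. This particular claim is, however, not needed: since you send $n,m\to\infty$ at fixed $q$ before sending $\delta\to 0$, all that is required is $R(q,n,m)\to 0$ as $n,m\to\infty$ for each fixed $q$. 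Second --- and this is the real gap you flag yourself --- to turn the integrand bound into a statement about $\Gamma$-type quantities you must control $\limsup_t\tfrac{1}{t}\log\mathbb{E}^{\tilde P}[e^{\kappa N_t}]$ as $\kappa\to 0$, uniformly in $n,m$. Under $\tilde P$ the intensity $\lambda(H_m)^q\lambda(H_n)^{1-q}$ is not the intensity of any Hawkes process in the class covered by Lemma \ref{Lipschitz} (it depends on both $h_m$ and $h_n$), so the uniform-in-$n$ Lipschitz bound you want to invoke would have to be re-derived for this blended process. Your appeal to ``domination by a linear Hawkes process'' is plausible (the hypothesis that $\lambda^{\alpha}$ is Lipschitz for every $\alpha\geq 1$ does make $\lambda^q\lambda^{1-q}$-type compositions sublinear), but it is not developed, and it is exactly the difficulty the paper's arrangement of H\"older is designed to avoid: by keeping the compensator with the $e^{p\theta N_t}$ factor and using Cauchy--Schwarz on the jump integral alone, the paper never leaves $P_n$, so $\mathbb{E}^{P_n}[e^{\kappa N_t}]$ is literally $\Gamma_n(\kappa)$ and Lemma \ref{Lipschitz} closes the argument immediately. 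I'd suggest rearranging your H\"older factors to mirror the paper's decomposition; the second change of measure then becomes unnecessary and the gap disappears.
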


\begin{pf}
Let us write $H_{n}(t)=\sum_{\tau_{j}<t}h_{n}(t-\tau_{j})$. Observe
first, that for any $q$,
%
\begin{equation}
\qquad \exp\biggl\{q\int_{0}^{t}\log\biggl(
\frac{\lambda(H_{m}(s))}{\lambda(H_{n}(s))} \biggr)\,dN_{s} -\int_{0}^{t}
\biggl(\frac{\lambda(H_{m}(s))^{q}}{\lambda(H_{n}(s))^{q-1}}-\lambda
\bigl(H_{n}(s)\bigr) \biggr)\,ds \biggr
\}
\end{equation}
is a martingale under $P_{n}$. By H\"{o}lder's inequality, for any
$p,q>1$ with $\frac{1}{p}+\frac{1}{q}=1$,
%
\begin{eqnarray}
\qquad \mathbb{E}^{P_{m}}\bigl[e^{\theta N_{t}}\bigr]&=&\mathbb{E}^{P_{n}}
\biggl[e^{\theta N_{t}}\frac{dP_{m}}{dP_{n}} \biggr]\nonumber
\\
&=&\mathbb{E}^{P_{n}} \bigl[e^{\theta N_{t}-\int_{0}^{t}(\lambda
(H_{m}(s))-\lambda(H_{n}(s)))\,ds-\int_{0}^{t}
\log({\lambda(H_{n}(s))}/{\lambda(H_{m}(s))} )\,dN_{s}} \bigr]
\nonumber\\[-8pt]\\[-8pt]
&\leq&\mathbb{E}^{P_{n}} \bigl[e^{p\theta N_{t}-p\int_{0}^{t}(\lambda
(H_{m}(s))-\lambda(H_{n}(s)))\,ds} \bigr]^{1/p}\nonumber
\\
&&{}\times
\mathbb{E}^{P_{n}} \bigl[e^{q\int_{0}^{t}\log({\lambda
(H_{m}(s))}/{\lambda(H_{n}(s))} )\,dN_{s}} \bigr]^{1/q}.
\nonumber
\end{eqnarray}
By the Cauchy--Schwarz inequality,
%
\begin{eqnarray}
&& \mathbb{E}^{P_{n}} \bigl[e^{q\int_{0}^{t}\log({\lambda
(H_{m}(s))}/{\lambda(H_{n}(s))} )\,dN_{s}} \bigr]^{1/q}\nonumber
\\[-1pt]
&&\qquad \leq \mathbb{E}^{P_{n}} \bigl[e^{\int_{0}^{t} ({\lambda
(H_{m}(s))^{2q}}/{\lambda(H_{n}(s))^{2q-1}}-\lambda(H_{n}(s)) )\,ds} \bigr
]^{{1}/{(2q)}}
\nonumber\\[-9pt]\\[-9pt]
&&\qquad \leq \mathbb{E}^{P_{n}} \bigl[e^{({1}/{c^{2q-1}})L_{2q}\int
_{0}^{t}\sum_{\tau<s}|h_{m}(s-\tau)-h_{n}(s-\tau)|\,ds} \bigr]^{{1}/(2q)}
\nonumber
\\[-1pt]
&&\qquad \leq\mathbb{E}^{P_{n}} \bigl[e^{({1}/{c^{2q-1}})L_{2q}\Vert
h_{m}-h_{n}\Vert_{L^{1}}N_{t}} \bigr]^{{1}/(2q)}.
\nonumber
\end{eqnarray}
We also have
%
\begin{eqnarray}
&& \mathbb{E}^{P_{n}} \bigl[e^{p\theta N_{t}-p\int_{0}^{t}(\lambda
(H_{m}(s))-\lambda(H_{n}(s)))\,ds} \bigr]^{1/p}
\nonumber\\[-9pt]\\[-9pt]
&&\qquad \leq \mathbb{E}^{P_{n}} \bigl[e^{p\theta N_{t}+pL_{1}\Vert h_{m}-h_{n}\Vert
_{L^{1}}N_{t}} \bigr]^{1/p}.\nonumber
\end{eqnarray}
Therefore, by Lemma \ref{Lipschitz} and the fact $\Gamma_{n}(0)=0$ for
any $n$, we have
%
\begin{eqnarray}
&&\Gamma_{m}(\theta)-\Gamma_{n}(\theta)\nonumber
\\[-1pt]
&&\qquad \leq\frac{1}{p}\Gamma_{n} (p\theta+pL_{1}
\varepsilon_{m,n} ) +\frac{1}{2q}\Gamma_{n} \biggl(
\frac{L_{2q}\varepsilon_{m,n}}{c^{2q-1}} \biggr)-\Gamma_{n}(\theta)
\nonumber
\\[-1pt]
&&\qquad \leq C(K)L_{1}\varepsilon_{m,n}+\frac{C(K)}{2q}\cdot
\frac{L_{2q}\varepsilon_{m,n}}{c^{2q-1}}
\nonumber\\[-9pt]\\[-9pt]
&&\quad\qquad{}  +\frac{1}{p}\Gamma_{n}(p\theta)-
\frac{1}{p}\Gamma_{n}(\theta)+ \biggl(1-\frac{1}{p}
\biggr)\bigl|\Gamma_{n}(\theta)\bigr|,
\nonumber
\\[-1pt]
&&\qquad \leq C(K)L_{1}\varepsilon_{m,n}+\frac{C(K)}{2q}\cdot
\frac{L_{2q}\varepsilon_{m,n}}{c^{2q-1}}\nonumber
\\[-1pt]
&&\quad\qquad{} +\frac{C(K)(p-1)K}{p}+ \biggl
(1-\frac{1}{p} \biggr)C(K)K,
\nonumber
\end{eqnarray}
where $\varepsilon_{m,n}=\Vert h_{m}-h_{n}\Vert_{L^{1}}$. Hence,
%
\begin{equation}
\limsup_{m,n\rightarrow\infty}\bigl\{\Gamma_{m}(\theta)-
\Gamma_{n}(\theta)\bigr\}\leq2 \biggl(1-\frac{1}{p}
\biggr)C(K)K,
\end{equation}
which is true for any $p>1$. Letting $p\downarrow1$, we get the
desired result.
\end{pf}

\begin{remark}
If $\lambda(\cdot)\geq c>0$ and $\lim_{z\rightarrow\infty}\frac{\lambda
(z)}{z^{\alpha}}=0$ for any $\alpha>0$,
then, $\lambda(\cdot)^{\sigma}$ is Lipschitz for any $\sigma\geq1$.
For instance, $\lambda(z)=[\log(z+c)]^{\beta}$ satisfies the conditions
if $\beta>0$ and $c>1$.
\end{remark}

\begin{theorem}
Assume that $\lambda(\cdot)\geq c$ for some $c>0$, $\lim_{z\rightarrow
\infty}\frac{\lambda(z)}{z}=0$ and $\lambda(\cdot)^{\alpha}$ is
Lipschitz with constant $L_{\alpha}$ for any $\alpha\geq1$. Then, for
any $\theta\in\mathbb{R}$,
%
\begin{equation}
\lim_{t\rightarrow\infty}\frac{1}{t}\log\mathbb{E}
\bigl[e^{\theta N_{t}}\bigr]=\Gamma(\theta)=\lim_{n\rightarrow\infty}
\Gamma_{n}(\theta).
\end{equation}
\end{theorem}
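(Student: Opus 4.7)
The plan is to extend the Hölder-plus-Girsanov argument of Lemma \ref{Cauchy} by comparing the law $P$ of the Hawkes process with general exciting function $h$ against the law $P_n$ of the approximating sum-of-exponentials Hawkes process. By Lemmas \ref{Lipschitz} and \ref{Cauchy}, $(\Gamma_n)$ is Cauchy uniformly on each $[-K,K]$ and each $\Gamma_n$ is $C(K)$-Lipschitz, so the limit $\Gamma(\theta)=\lim_n\Gamma_n(\theta)$ exists and is $C(K)$-Lipschitz on $[-K,K]$. It remains to show that $\limsup_t$ and $\liminf_t$ of $F_t(\theta):=t^{-1}\log\mathbb{E}^P[e^{\theta N_t}]$ both equal $\Gamma(\theta)$.

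For the upper bound, I would apply Hölder with $p,q>1$, $1/p+1/q=1$, to $\mathbb{E}^P[e^{\theta N_t}]=\mathbb{E}^{P_n}[e^{\theta N_t}\,dP/dP_n]$, and control the $P_n$-expectation of the Radon--Nikodym piece via Cauchy--Schwarz on the associated $P_n$-exponential martingale, exactly as in Lemma \ref{Cauchy}. Using $|\lambda^{\alpha}(H(s))-\lambda^{\alpha}(H_n(s))|\leq L_{\alpha}\sum_{\tau<s}|h(s-\tau)-h_n(s-\tau)|$ and writing $\epsilon_n=\|h-h_n\|_{L^1}$, this yields
\begin{equation*}
\mathbb{E}^P[e^{\theta N_t}]\leq \mathbb{E}^{P_n}[e^{(p\theta+pL_1\epsilon_n)N_t}]^{1/p}\,\mathbb{E}^{P_n}[e^{(L_{2q}/c^{2q-1})\epsilon_n N_t}]^{1/(2q)}.
\end{equation*}
Taking $\limsup_t t^{-1}\log$, using the existence of $\Gamma_n(\theta')=\lim_t t^{-1}\log\mathbb{E}^{P_n}[e^{\theta' N_t}]$ in the sum-of-exponentials case, then letting $n\to\infty$ (by the uniform Lipschitz bound of Lemma \ref{Lipschitz} so that $\Gamma_n(p\theta+pL_1\epsilon_n)\to\Gamma(p\theta)$ and $\Gamma_n((L_{2q}/c^{2q-1})\epsilon_n)\to 0$), and finally $p\downarrow 1$ (by continuity of $\Gamma$), gives $\limsup_t F_t(\theta)\leq\Gamma(\theta)$.

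For the matching lower bound, I would swap $P$ and $P_n$ to obtain the analogous inequality with the two measures exchanged. The LHS has the true limit $\Gamma_n(\theta)$, so taking $t^{-1}\log$ and using $\liminf(A_t+B_t)\leq\liminf A_t+\limsup B_t$ gives
\begin{equation*}
\Gamma_n(\theta)\leq \tfrac{1}{p}\underline{\Gamma}(p\theta+pL_1\epsilon_n)+\tfrac{1}{2q}\overline{\Gamma}((L_{2q}/c^{2q-1})\epsilon_n),
\end{equation*}
with $\underline{\Gamma}=\liminf_t F_t$ and $\overline{\Gamma}=\limsup_t F_t$. The upper-bound step gives $\overline{\Gamma}\leq\Gamma$, so the second term vanishes as $n\to\infty$. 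Passing $n\to\infty$ and $p\downarrow 1$ should yield $\Gamma\leq\underline{\Gamma}$, and combined with $\underline{\Gamma}\leq\overline{\Gamma}\leq\Gamma$ we obtain $\underline{\Gamma}=\overline{\Gamma}=\Gamma$.

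The main obstacle is justifying the passage $\underline{\Gamma}(p\theta+pL_1\epsilon_n)\to\underline{\Gamma}(p\theta)$ as $n\to\infty$, since a $\liminf$ of continuous functions need not itself be continuous. The resolution I have in mind is that each $F_t$ is convex in $\theta$ (by Hölder), and the upper-bound step together with the trivial Jensen lower bound $F_t(\theta)\geq \theta\,\mathbb{E}^P[N_t]/t$ yield a pointwise bound $\sup_{t\geq T_0}|F_t(\theta)|<\infty$ locally uniformly in $\theta$ (noting that pointwise convergence of convex $F_t^{(n)}\to\Gamma_n$ is automatically uniform on compacts for each fixed large $n$, which then transfers to $F_t$ via the Hölder inequality). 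Since a pointwise-bounded family of convex functions on an open interval is automatically equi-Lipschitz on any compact subinterval, both $\underline{\Gamma}$ and $\overline{\Gamma}$ are Lipschitz on $[-K,K]$, which lets the limits pass through. Making this equi-Lipschitz argument rigorous is, I expect, the most delicate point.
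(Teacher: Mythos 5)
Your proposal is correct and follows the same Hölder-plus-Girsanov comparison between $P$ and $P_{n}$ that the paper uses; the only real divergence is how you pass $n\rightarrow\infty$ in the lower bound $\Gamma_{n}(\theta)\leq\tfrac{1}{p}\underline{\Gamma}(p\theta+pL_{1}\epsilon_{n})+\tfrac{1}{2q}\overline{\Gamma}(L_{2q}\epsilon_{n}/c^{2q-1})$. You propose to show $\underline{\Gamma}$ is locally Lipschitz via convexity and equi-Lipschitz bounds on the family $\{F_{t}\}$ (and your parenthetical device of transferring uniform-on-compacts bounds from $F_{t}^{(n)}$ to $F_{t}$ via Hölder does repair the apparent non-uniformity in $\theta$ of the threshold $T_{0}$). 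The paper instead applies a second Hölder split with exponents $p',q'$, separating $\tfrac{1}{pt}\log\mathbb{E}[e^{(p\theta+pL_{1}\epsilon_{n})N_{t}}]$ into an $n$-free term $\tfrac{1}{pp't}\log\mathbb{E}[e^{pp'\theta N_{t}}]$ plus an error $\tfrac{1}{pq't}\log\mathbb{E}[e^{q'pL_{1}\epsilon_{n}N_{t}}]$, and then kills both $n$-dependent error terms by dominating $\lambda$ with a linear rate $\nu_{\epsilon}+\epsilon z$ whose limiting log-MGF is known (Theorem~\ref{linearcase}) to be finite and continuous near $0$; this sidesteps any regularity question for $\underline{\Gamma}$ entirely, at the cost of an extra pair of exponents. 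An even lighter fix you might consider: substitute $\theta\mapsto\theta-L_{1}\epsilon_{n}$ in your displayed inequality so the argument of $\underline{\Gamma}$ becomes the fixed value $p\theta$; the left side $\Gamma_{n}(\theta-L_{1}\epsilon_{n})\rightarrow\Gamma(\theta)$ by the uniform Lipschitz bound of Lemma~\ref{Lipschitz} together with Lemma~\ref{Cauchy}, with no regularity of $\underline{\Gamma}$ needed. One small point in your favor: you correctly use $\overline{\Gamma}$ (a $\limsup$) on the error term, which is the legitimate direction $\liminf(A_{t}+B_{t})\leq\liminf A_{t}+\limsup B_{t}$; the paper writes $\liminf$ on all three pieces, a harmless slip since the relevant $\limsup$ bounds do vanish as $n\rightarrow\infty$.
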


\begin{pf}
By Lemma \ref{Cauchy}, $\Gamma_{n}(\theta)$ tends to $\Gamma(\theta)$
uniformly on any compact set $[-K,K]$. Since $\Gamma_{n}(\theta)$ is Lipschitz
by Lemma \ref{Lipschitz}, it is continuous and the limit $\Gamma$ is
also continuous. Let $\varepsilon_{n}=\Vert h_{n}-h\Vert_{L^{1}}\leq
\varepsilon$.
As in the proof of Lemma \ref{Cauchy}, for any $\theta\in[-K,K]$,
$p,q>1$, $\frac{1}{p}+\frac{1}{q}=1$, we get
%
\begin{eqnarray}
\qquad && \limsup_{t\rightarrow\infty}\frac{1}{t}\log\mathbb{E}
\bigl[e^{\theta N_{t}}\bigr]
\nonumber\\[-8pt]\\[-8pt]
&&\qquad \leq\Gamma_{n}(\theta)+C(K)L_{1}\varepsilon_{n}+
\frac{C(K)}{2q}\cdot\frac{L_{2q}\varepsilon_{n}}{c^{2q-1}}+2 \biggl(1-\frac{1}{p}
\biggr)C(K)K.
\nonumber
\end{eqnarray}
Letting $n\rightarrow\infty$ first and then $p\downarrow1$, we get
$\limsup_{t\rightarrow\infty}\frac{1}{t}\log\mathbb{E}[e^{\theta N_{t}}]
\leq\Gamma(\theta)$. Similarly, for any $p',q'>1$ with $\frac
{1}{p'}+\frac{1}{q'}=1$,
%
\begin{eqnarray}
\Gamma_{n}(\theta)&\leq&\liminf_{t\rightarrow\infty}
\frac{1}{pt}\log\mathbb{E}\bigl[e^{(p\theta+pL_{1}\varepsilon
_{n})N_{t}}\bigr]\nonumber
\\
&&{}  +\liminf
_{t\rightarrow\infty}\frac{1}{2qt}\log\mathbb{E} \bigl[e^{((
{L_{2q}\varepsilon_{n}})/{c^{2q-1}})N_{t}}
\bigr]
\nonumber\\[-8pt]\\[-8pt]
&\leq&\liminf_{t\rightarrow\infty}\frac{1}{pp't}\log\mathbb{E}
\bigl[e^{pp'\theta N_{t}}\bigr] +\liminf_{t\rightarrow\infty}\frac{1}{pq't}
\log\mathbb{E}\bigl[e^{q'pL_{1}\varepsilon_{n}N_{t}}\bigr]
\nonumber
\\
&&{}+\liminf_{t\rightarrow\infty}\frac{1}{2qt}\log\mathbb{E}
\bigl[e^{(({L_{2q}\varepsilon_{n}})/{c^{2q-1}})N_{t}} \bigr].
\nonumber
\end{eqnarray}
Since we can dominate $\lambda(\cdot)$ by the linear function $\lambda
(z)=\nu+z$ in which case the limit of logarithmic
moment generating function $\Gamma_{\nu}(\theta)$ is continuous in
$\theta$, we may let $n\rightarrow\infty$ to obtain
%
\begin{equation}
\Gamma(\theta)\leq\liminf_{t\rightarrow\infty}\frac{1}{pp't}\log
\mathbb{E}\bigl[e^{pp'\theta N_{t}}\bigr].
\end{equation}
This holds for any $\theta$ and thus
%
\begin{equation}
\liminf_{t\rightarrow\infty}\frac{1}{t}\log\mathbb{E}
\bigl[e^{\theta N_{t}}\bigr]\geq pp'\Gamma\biggl(\frac{\theta}{pp'}
\biggr).
\end{equation}
Letting $p,p'\downarrow1$ and using the continuity of $\Gamma(\cdot)$,
we get the desired result.
\end{pf}

\begin{theorem}
Assume that $\lambda(\cdot)\geq c$ for some $c>0$, $\lim_{z\rightarrow
\infty}\frac{\lambda(z)}{z}=0$
and $\lambda(\cdot)^{\alpha}$ is Lipschitz with constant $L_{\alpha}$
for any $\alpha\geq1$.
We have that $(N_{t}/t\in\cdot)$ satisfies the large deviation
principle with the rate function
%
\begin{equation}
I(x)=\sup_{\theta\in\mathbb{R}}\bigl\{\theta x-\Gamma(\theta)\bigr\}.
\end{equation}
\end{theorem}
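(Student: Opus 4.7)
My plan is to apply the Gärtner--Ellis theorem, using the preceding theorem which establishes that $\Gamma(\theta) = \lim_{t\to\infty}\frac{1}{t}\log\mathbb{E}[e^{\theta N_t}]$ exists and equals $\lim_n \Gamma_n(\theta)$ for every $\theta\in\mathbb{R}$. Each $\Gamma_n$ is convex (being a limit of log--moment generating functions) and, by Lemma \ref{Lipschitz}, Lipschitz on each compact interval $[-K,K]$ with a constant $C(K)$ independent of $n$. Passing to the limit, $\Gamma$ is convex and locally Lipschitz on $\mathbb{R}$, so its effective domain is all of $\mathbb{R}$ and $0$ lies in its interior.

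First I would establish the upper bound. Since $\Gamma$ is finite and continuous everywhere, the standard Gärtner--Ellis upper bound (see Theorem 2.3.6 of Dembo and Zeitouni \cite{Dembo}) applies directly and gives
\[
\limsup_{t\to\infty}\frac{1}{t}\log\mathbb{P}(N_t/t\in C)\leq -\inf_{x\in C}I(x)
\]
for every closed $C\subset\mathbb{R}$, with $I=\Gamma^*$ the Fenchel--Legendre transform. No further smoothness of $\Gamma$ is required at this step.

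The lower bound is the hard part. My strategy is approximation from the Markovian case. For each $n$ the process under $P_n$ has a sum--of--exponentials exciting function, so by the LDP of Section 4 it satisfies a full LDP with rate $I_n = \Gamma_n^*$. Given $x$ in the interior of the effective domain of $I$ and $\epsilon>0$, I would first use the uniform--on--compacts convergence $\Gamma_n\to\Gamma$ implicit in Lemma \ref{Cauchy} to deduce $\limsup_n I_n(x)\leq I(x)$, so that
\[
\liminf_{t\to\infty}\frac{1}{t}\log\mathbb{P}^{P_n}\!\left(\tfrac{N_t}{t}\in B_\epsilon(x)\right)\geq -I_n(x).
\]
To transfer this from $P_n$ to $P$, I would replay the Hölder/change--of--measure estimate from Lemma \ref{Cauchy}: write $\mathbb{P}(N_t/t\in A)=\mathbb{E}^{P_n}[\mathbf{1}_A\,dP/dP_n]$, control the log--density via the explicit Girsanov expression involving $\lambda(H_n)$ and $\lambda(H)$, and use Lipschitzness of $\lambda^\alpha$ together with $\lambda\geq c$ to show that the correction is $O(\|h_n-h\|_{L^1})$ on the exponential scale, uniformly in $t$.

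The main obstacle is precisely this transfer step: one must keep the multiplicative correction introduced by the change of measure small on the exponential scale \emph{uniformly over the event} $\{N_t/t\in B_\epsilon(x)\}$. This is exactly where the hypotheses $\lambda\geq c$ and Lipschitzness of $\lambda^\alpha$ for all $\alpha\geq 1$ play their role, in the same fashion as in Lemma \ref{Cauchy}. Sending $t\to\infty$ first, then $n\to\infty$, and using continuity of $\Gamma$ (and hence an appropriate semicontinuity property of the conjugate at the point $x$) to close the approximation yields $\liminf_{t\to\infty}\frac{1}{t}\log\mathbb{P}(N_t/t\in B_\epsilon(x))\geq -I(x)$, which combined with the upper bound completes the LDP.
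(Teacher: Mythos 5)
Your overall strategy coincides with the paper's: G\"artner--Ellis for the upper bound, and for the lower bound a transfer of the LDP lower bound from each Markovian approximation $P_n$ (where the Section 4 result applies) to $\mathbb{P}$ via a change of measure controlled by the Lipschitzness of $\lambda^{\alpha}$ and $\lambda\geq c$, with limits taken in the order $t\to\infty$, $n\to\infty$, and finally the H\"older exponent $q\downarrow 1$, using the uniform-on-compacts convergence $\Gamma_n\to\Gamma$. Two points, however, keep the sketch from being a proof.

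First, to conclude you must pass the Fenchel--Legendre transform to the limit, i.e.\ show $\sup_{\theta\in\mathbb{R}}\{\theta x-\Gamma_n(\theta)\}\to\sup_{\theta\in\mathbb{R}}\{\theta x-\Gamma(\theta)\}$. Uniform convergence on compacts, together with ``continuity of $\Gamma$'' or a vague ``semicontinuity of the conjugate,'' does not give this, because the supremum ranges over all of $\mathbb{R}$ and could in principle escape to infinity along the sequence. The paper supplies the missing equicoercivity: since $\lambda\geq c$, the process stochastically dominates a Poisson process of rate $c$, so $\Gamma_n(\theta)\geq c(e^{\theta}-1)$ for $\theta>0$ uniformly in $n$, whence $\Gamma_n(\theta)/\theta\to+\infty$ as $\theta\to+\infty$ uniformly in $n$. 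This confines the suprema to a common compact $\theta$-interval (for fixed $x$), where the uniform convergence finishes the job. You need to make this step explicit; without it the argument does not close.

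Second, the obstacle you identify --- keeping the change-of-measure correction small ``uniformly over the event $\{N_t/t\in B_{\epsilon}(x)\}$'' --- is not actually present and suggests you do not yet have the right implementation in mind. Applying H\"older under $\mathbb{P}$ gives
\begin{equation}
P_n\left(\tfrac{N_t}{t}\in B_{\epsilon}(x)\right)\leq\left\Vert\tfrac{dP_n}{d\mathbb{P}}\right\Vert_{L^p(\mathbb{P})}\,\mathbb{P}\left(\tfrac{N_t}{t}\in B_{\epsilon}(x)\right)^{1/q},
\end{equation}
in which the correction factor is a \emph{global} $L^p$ norm, estimated exactly as in Lemma \ref{Cauchy} by $\Gamma$ evaluated at arguments of size $O(\Vert h_n-h\Vert_{L^1})$, and has nothing to do with the event. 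No uniform pointwise control of the Radon--Nikodym derivative on the event is needed; the residual $1/q$ power on $\mathbb{P}(\cdot)$ is absorbed by sending $q\downarrow 1$ at the very end.
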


\begin{pf}
For the upper bound, apply the G\"{a}rtner--Ellis theorem. Let us prove
the lower bound.
Let $B_{\varepsilon}(x)$ denote the open ball centered at $x$ with radius
$\varepsilon>0$. By H\"{o}lder's inequality,
for any $p,q>1$ with $\frac{1}{p}+\frac{1}{q}=1$,
%
\begin{equation}
P_{n} \biggl(\frac{N_{t}}{t}\in B_{\varepsilon}(x) \biggr) \leq
\bigg\Vert\frac{dP_{n}}{d\mathbb{P}} \bigg\Vert_{L^{p}(\mathbb{P})}\mathbb{P}
\biggl(\frac{N_{t}}{t}
\in B_{\varepsilon}(x) \biggr)^{1/q}.
\end{equation}
Therefore, letting $t\rightarrow\infty$, we have
%
\begin{eqnarray}
\sup_{\theta\in\mathbb{R}}\bigl\{\theta x-\Gamma_{n}(\theta)\bigr
\}&=&\lim_{t\rightarrow\infty}\frac{1}{t}\log P_{n} \biggl(
\frac{N_{t}}{t}\in B_{\varepsilon}(x) \biggr)\nonumber
\\
&\leq& \frac{1}{pp'}\Gamma\bigl(pp'L_{1}
\varepsilon_{n}\bigr)+\frac{1}{2pq'}\Gamma\biggl(\frac{L_{2pq'}\varepsilon
_{n}}{c^{2pq'-1}}
\biggr)
\\
&&{} +\frac{1}{q}\liminf_{t\rightarrow\infty}\frac{1}{t}\log
\mathbb{P} \biggl(\frac{N_{t}}{t}\in B_{\varepsilon}(x) \biggr),
\nonumber
\end{eqnarray}
where $\varepsilon_{n}=\Vert h_{n}-h\Vert_{L^{1}}$. Hence, letting
$n\rightarrow\infty$, see that
%
\begin{equation}
\frac{1}{q}\liminf_{t\rightarrow\infty}\frac{1}{t}\log
\mathbb{P} \biggl(\frac{N_{t}}{t}\in B_{\varepsilon}(x) \biggr)\geq\limsup
_{n\rightarrow\infty} \sup_{\theta\in\mathbb{R}}\bigl\{\theta x-
\Gamma_{n}(\theta)\bigr\}.
\end{equation}
Since $\Gamma_{n}(\theta)\rightarrow\Gamma(\theta)$ uniformly on any
compact set $K$,
%
\begin{equation}
\sup_{\theta\in K}\bigl\{\theta x-\Gamma_{n}(\theta)\bigr
\}\rightarrow\sup_{\theta\in K}\bigl\{\theta x-\Gamma(\theta)\bigr\},
\end{equation}
as $n\rightarrow\infty$ for any such set $K$. Notice that $\lambda(\cdot
)\geq c>0$ and recall that the limit for
the logarithmic moment generating function with parameter $\theta$ for
a Poisson process with constant rate $c$ is $(e^{\theta}-1)c$. Hence
%
\begin{equation}
\liminf_{\theta\rightarrow+\infty}\frac{\Gamma_{n}(\theta)}{\theta}\geq
\liminf
_{\theta\rightarrow+\infty}\frac{(e^{\theta}-1)c}{\theta}=+\infty,
\end{equation}
which implies that $\sup_{\theta\in\mathbb{R}}\{\theta x-\Gamma
_{n}(\theta)\}\rightarrow\sup_{\theta\in\mathbb{R}}\{\theta x-\Gamma
(\theta)\}$.
Therefore,
%
\begin{equation}
\frac{1}{q}\liminf_{t\rightarrow\infty}\frac{1}{t}\log
\mathbb{P} \biggl(\frac{N_{t}}{t}\in B_{\varepsilon}(x) \biggr) \geq\sup
_{\theta\in\mathbb{R}}\bigl\{\theta x-\Gamma(\theta)\bigr\}.
\end{equation}
Letting $q\downarrow1$, we get the desired result.
\end{pf}

\begin{remark}
The class of nonlinear Hawkes processes with general exciting function
$h$ for which
we proved the large deviation principle here is unfortunately a bit too special.
It works for the rate function like $\lambda(z)=[\log(c+z)]^{\beta}$,
for example,
but does not work for $\lambda(\cdot)$ that has sublinear power law growth.
In fact, by the coupling argument we used in the proof of the case of
linear $\lambda(\cdot)$ in Theorem \ref{linearcase},
we can prove that in the case when $\lim_{z\rightarrow\infty}\frac
{\lambda(z)}{z}=0$ and $\lambda(\cdot)$ is $\alpha$-Lipshcitz
and $\lambda(\cdot)\geq c>0$, $\Gamma(\theta)=\lim_{n\rightarrow\infty
}\Gamma_{n}(\theta)$ for $\theta\leq\mu-1-\log\mu$,
where $\mu=\int_{0}^{\infty}h(t)\,dt$ and $\Gamma$ and $\Gamma_{n}$ are
the limit of logarithmic moment generating functions
when the exciting functions are $h$ and $h_{n}$, respectively, and
$h_{n}\rightarrow h$ in $L^{1}$.
For the linear case, since $\Gamma(\theta)=\infty$ for $\theta>\mu
-1-\log\mu$, the coupling argument is good enough.
However, for the sublinear $\lambda(\cdot)$, $\Gamma(\theta)<\infty$
for any $\theta$ and the coupling argument is not enough.
In fact, it will appear in Zhu \cite{Zhu} that under the condition that
$\lim_{z\rightarrow\infty}\frac{\lambda(z)}{z}=0$, $\lambda(\cdot)$
is positive, increasing, $\alpha$-Lipshcitz and $\lambda(\cdot)\geq
c>0$ and $h(\cdot)$ is positive, decreasing and $\int_{0}^{\infty
}h(t)\,dt<\infty$,
there is a level-3 large deviation principle from which we can use the
contraction principle to get the level-1 large deviation principle
for $(N_{t}/t\in\cdot)$. Therefore, we conjecture that in the sublinear
case, $\Gamma(\theta)=\lim_{n\rightarrow\infty}\Gamma_{n}(\theta)$
for any $\theta$ and $(N_{t}/t\in\cdot)$ satisfies the large deviation
principle with
rate function $I(x)=\sup_{\theta\in\mathbb{R}}\{\theta x-\Gamma(\theta
)\}$.
The advantage of approximating the general case by the case when $h$ is
a sum of exponentials is that $\Gamma_{n}(\theta)$
can be evaluated by an optimization problem, which should be computable
by some numerical scheme.
\end{remark}

\section*{Acknowledgments}
The author is enormously grateful to his advisor Professor S. R. S.
Varadhan for suggesting this topic and for his superb guidance,
understanding, patience and generosity. He would also like to thank his
colleague Dmytro Karabash for valuable discussions on this project.
The author would also thank the anonymous referees who provided very
helpful suggestions for the improvement of this paper, and
to whom the author is much indebted. The author also thanks an
associate editor for helpful remarks. The author also thanks
Professor Henry McKean for pointing out some typos and minor mistakes
in the manuscript.




\printaddresses

\end{document}